\title[]{Cutoff and Dynamical Phase Transition for the General Multi-component Ising Model}
\author[Yang]{Seoyeon Yang}
\address[Seoyeon Yang]{\newline Department of Mathematical Sciences\newline Seoul National University, Seoul 08826, Korea}
\email{aromeyang@snu.ac.kr}
\newtheorem{theorem}{Theorem}[section]
\newtheorem{lemma}[theorem]{Lemma}
\newtheorem{proposition}[theorem]{Proposition}
\newtheorem{remark}[theorem]{Remark}
\newtheorem{definition}[theorem]{Definition}
\newcommand{\E}{\mathbb{E}}
\newcommand{\prob}{\mathbb{P}}
\newcommand{\Var}{\mathrm{Var}}
\newcommand{\bS}{{\bm{S}}}
\newcommand{\bs}{{\bm{s}}}
\newcommand{\bX}{{\bm{X}}}
\newcommand{\bone}{\mathbf{1}}
\newcommand{\mo}{\chi}
\newcommand{\ba}{{\bm{a}}}
\newcommand{\bx}{{\bm{x}}}
\begin{document}

\date{\today}
\subjclass[2020]{{60J10, 60K35, 82C20}} \keywords{Markov chains, Mixing time, Cutoff, Coupling, Ising model, Glauber dynamics, Heat-bath dynamics, Mean-field model.}

\begin{abstract}
We study the multi-component Ising model, which is also known as the block Ising model. In this model, the particles are partitioned into a fixed number of groups with a fixed proportion, and the interaction strength is determined by the group to which each particle belongs. We demonstrate that the Glauber dynamics on our model exhibits the cutoff$\mbox{--}$metastability phase transition as passing the critical inverse-temperature $\beta_{cr}$, which is determined by the proportion of the groups and their interaction strengths, regardless of the total number of particles. For $\beta<\beta_{cr}$, the dynamics shows a cutoff at $\alpha n\log n$ with a window size $O(n)$, where $\alpha$ is a constant independent of $n$. For $\beta=\beta_{cr}$, we prove that the mixing time is of order $n^{3/2}$. In particular, we deduce the so-called non-central limit theorem for the block magnetizations to validate the optimal bound at $\beta=\beta_{cr}$. For $\beta>\beta_{cr}$, we examine the metastability, which refers to the exponential mixing time. Our results, based on the position of the employed Ising model on the complete multipartite graph, generalize the results of previous versions of the model.
\end{abstract}

\maketitle \centerline{\date}

\section{Introduction}
In this paper, we study the general multi-component Curie--Weiss Ising model and demonstrate the cutoff--metastability phase transition including the rigorous power-law mixing at the critical temperature. We aim to deal with the most general case of a multi-component system in the mean-field approximation.

The phase transition between cutoff and metastability is observed universally in many cases of statistical physics, typically when the system is determined by parameters, such as temperature. The \emph{cutoff} phenomenon is an asymptotically abrupt convergence of a system to its equilibrium over negligible time referred to as the \emph{cutoff window}, which is generally considered as the convergence rate, whereas \emph{metastability} corresponds to the phenomenon when the system is trapped in local minima of energy and the exit time from these traps takes exponentially long time. There have been several efforts to associate these two phenomena although they differ in focus. Specifically, cutoff focuses on the distance between measures, whereas metastability examines exit time. After a conjecture that a necessary and sufficient condition for cutoff requires the divergence of the product, as $n$ approaches infinity, of the mixing time and spectral gap was proposed in 2004 (\cite{workshop}), Barrera, Bertoncini, and Fernández purposed to link the two phenomena in \cite{link1,link2}, and Hernández1, Kovchegov, and Otto followed suit in \cite{link3-bipartite-potts}.

Consequently, several rigorous studies have been conducted to prove cutoff--metastability transition. A popular model that exhibits this phase transition is the classical Curie--Weiss Ising model. In \cite{Curie-Ising-classic}, the total magnetization chain, which is the sum of all spins, was primarily used to prove the full cutoff--metastability phase transition with the classical coupling argument including a contraction in the mean of coupling distances. In the high temperature regime, the case of when the inverse temperature $\beta$ satisfies $\beta<1$, the dynamics exhibits a cutoff at $(2(1-\beta))^{-1}n\log n$ with a window of order $n$. At a critical temperature, the case of $\beta=1$, the mixing time is of order $n^{3/2}$. In the low temperature regime, the case of $\beta>1$, the dynamics shows metastability, wherein mixing time is exponential in $n$; moreover, the restricted system to the set of non-negative magnetization has a mixing time of order $n\log n$. The metastability is derived from the conductance arguments that a bottleneck exists between states with positive magnetization and states with negative magnetization. If the system is restricted to non-negative magnetization, the bottleneck is removed to accelerate the chain convergence. Detailed characterization and behavior of the mixing time to transition from $O(n\log n)$ to $O(n^{3/2})$ and ultimately to $O(\exp(n))$ is provided in \cite{Curie-Ising1}. In the same aspect, the phase transition of the Ising model on a regular tree was completed in \cite{Ising-tree2} by verifying polynomial mixing at criticality. Following the in-depth research of the spectral gap on $\mathbb{Z}^2$ (except for the criticality), Lubetzky and Sly demonstrated the polynomial upper bound for the critical mixing, accomplishing the full slowdown in \cite{Ising-lattice3}. Furthermore, the universal property that the Ising model in any geometry exhibits cutoff at sufficiently high temperature is proven in \cite{Ising-universal}.

There are also several studies on $q$-states Potts model, such as the Curie--Weiss Potts model and the lattice model in the case where $q$ is sufficiently large. See \cite{Curie-Potts1} and therein. In \cite{Curie-Potts1}, the critical inverse temperature $\beta_s(q)$ and the transition from cutoff to metastability passing through polynomial mixing at $\beta_s(q)$ has been proved. The authors utilized the Markov chain named proportions chain, which is the vector of the proportions of $q$ states, and a coupling argument to compute mixing times. The obtained results are similar to the Ising case because the dynamics exhibits a cutoff at $O(n\log n)$ with a window of order $n$ on $\beta<\beta_s(q)$, a polynomial mixing time at $\beta=\beta_s(q)$, and mixing time of exponential order on $\beta>\beta_s(q)$.
\vspace{0.2cm}

For the Ising model, it is conjectured that a similar cutoff--metastability phase transition phenomenon will occur in the multi-component model (Fig \ref{fig:stactic}). The multi-component Ising model has been proposed as a better approximation of lattice models than the classical Curie--Weiss Ising model. To illustrate, the model considers the partitioning of the interacting particles into several groups such that the interaction strength depends on the groups to which the particles belong (Fig \ref{fig:multi-component}). The multi-component model has received considerable attention following the research on the bipartite mean-field in \cite{bi1, bi2}, as an approximation of lattice models for meta-magnets. Moreover, as Brock proposed a new perspective discrete choice models with interactions for the Curie--Weiss model in \cite{social_eco}, the multi-component model is also considered as a representative approach to describe social phenomena or collective dynamics with social interactions among several groups. See \cite{social1, stat} and therein for more details. Thereby it has also gained immense attention from various fields besides statistical physics. Owing to increasing interest in block networking systems, multi-component models based on the Ising and Potts model has been substantially studied because of their versatility of applications in many fields including statistical physics, applied mathematics \cite{recover}, economics \cite{social_eco2}, and biology \cite{biology}.

\begin{figure}[ht] 
    \centering
    \includegraphics{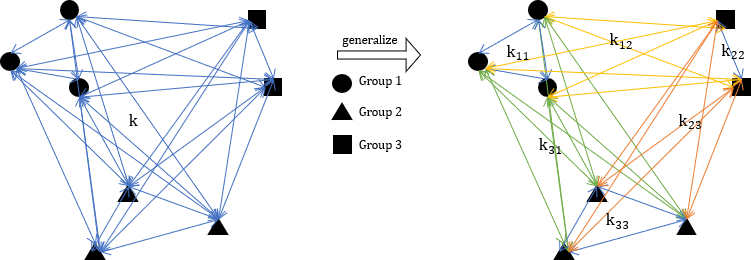}
    \caption{Generalization to a multi-component model.}
    \label{fig:multi-component}
\end{figure}

\begin{figure}
    \begin{subfigure}{0.3\linewidth}
        \centering
        \includegraphics[width=\linewidth]{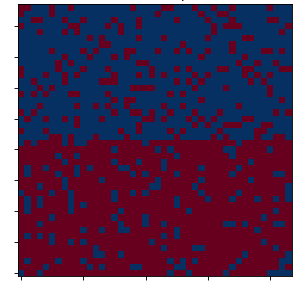}
        \caption{Low temperature}
    \end{subfigure}
    \begin{subfigure}{0.31\linewidth}
        \centering
        \includegraphics[width=\linewidth]{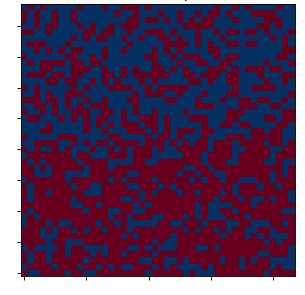}
        \caption{Critical temperature}
    \end{subfigure}
    \begin{subfigure}{0.3\linewidth}
        \centering
        \includegraphics[width=\linewidth]{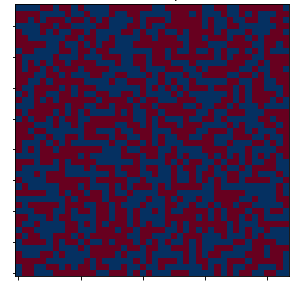}
        \caption{High temperature}
        \end{subfigure}
    \caption{Lattice snapshots at different temperatures for a 2-component model. The upper half of the plane represents group 1, while the lower half represents group 2. 
    The interaction within each group is stronger than the interaction between the two groups.
    As the temperature increases, disorder becomes more prominent.}
    \label{fig:stactic}
\end{figure}

Furthermore, there are diverse mathematical studies establishing limit theorems on the multi-component model as well as the classical Curie--Weiss Ising model. For instance, the authors in \cite{Block3} analyzed the multi-component Ising system demonstrating the large deviations principles (LDPs), central limit theorems (CLTs), and non-central limit theorems. Specifically, two methods were used to prove the CLT in the high temperature regime: the Hubbard--Stratonovich transformation, which is one of the most core techniques of showing CLT, and the multivariate version of the exchangeable pair approach in Stein’s method, which is used to determine the convergence rates in the CLT. For more studies on such limit results, see \cite{Blockeg1,Blockeg2,Blockeg4,bipartite-potts} and therein. There have also been some studies on the cutoff phenomenon in the multi-component model. To the best of our knowledge, the first attempt at a non-classical Curie--Weiss model was established in \cite{link3-bipartite-potts}; the author employed the aggregate path coupling method by extending the classical path coupling method to verify rapid mixing in the high temperature regime on the bipartite Potts model. Subsequently, \cite{Hee} proved cutoff--metastability transition of the Ising model on the complete multipartite graph, for the case of no interaction strength on the same group and homogeneous interaction strength otherwise, using the classical path coupling method.
\vspace{0.2cm}

In this paper, we obtain the critical inverse-temperature $\beta_{cr}$ explicitly and employ some techniques including an elaborate coupling argument to demonstrate the cutoff--metastability transition on the general multi-component Ising model. This is the first time full cutoff--metastability phase transition on the multi-component model with a general condition on interaction strength is proven. Moreover, based on the results obtained in this paper, our model generalizes other previous versions of the Ising model on the multipartite graph. Despite the growing interest and significant research on the multi-component model, coupled with the widespread belief that the dynamics can exhibit cutoff--metastability transition, relatively few cases have been introduced to rigorously verify this conjecture as cited in \cite{Ising-lattice3}. As a result, the full cutoff--metastability phase transition on the multi-component model has not been adequately investigated. Although the overall proof flow is a coupling argument, which is the universally accepted method, the detailed content is sufficiently delicate and sophisticated to deal with the general interaction strength.

To begin with, we verify the cutoff in the high temperature regime using the coupling argument. For an upper bound, we show various contraction properties of grand coupling, which is a generalized version of the monotone coupling in the classical Curie--Weiss Ising model, and subsequently construct a suitable coupling to establish proper supermartingale. See Section \ref{sec:Pre} for the precise definition. For a lower bound, we attain a particular starting state whose distribution of Glauber dynamics differs from the stationary measure. Despite the notable cutoff proofs in \cite{Hee, link3-bipartite-potts}, the authors assumed a homogeneous interaction strength. In contrast to this homogeneous interaction case, our generalized model is burdensome because both mean-field influence and updating probabilities are different for each group, further complicating the mean-field approximation. This challenge is alleviated by applying the mean value theorem in a novel manner along with intricate matrix computations.

Additionally, we prove power-law mixing at the critical temperature with the general settings on interaction strength. The critical case is the most demanding as observed in other models. This is because one should evince the number of steps it takes to agree in magnetizations; however, the criticality implies neither convergence nor divergence of the total-variation distance. Therefore, the ordinary coupling argument does not apply. In this paper, we observe the square sum of magnetizations and use tower property to compute its expected value to bound the mixing time at the critical temperature. Furthermore, we confirm that this bound is optimal by applying the non-central limit theorem with respect to the magnetization vector, which is also verified in this paper. In many cases, the dynamics at criticality follows a specific distribution under the stationary measure. For example, in the classical Curie--Weiss Ising model, the quantity $n^{-3/4}\sum_{v\in V}\sigma(v)$ converges weakly to a non-trivial limit law (\cite{E}). Generally, it is onerous to ascertain the limit distribution, that is, although the multi-component Ising model offers significant contributions, the results have limitations on bipartite models (\cite{bipartite-potts}) or the condition of the uniform case (\cite{Block3}). In this paper, we achieve a suitable Hubbard--Stratonovich transformation to demonstrate the non-trivial distribution of magnetizations under the stationary measure on the generalized multi-component Ising model. Consequently, we observe a specific starting configuration whose dynamics is different from the stationary one to conclude the optimal polynomial mixing time at the critical temperature.

Lastly, we prove metastability in the low temperature regime. We establish two local minima of free energy by certifying the existence of two solutions of the mean-field equation. By conductance arguments, as shown in \cite{Curie-Potts1} or \cite[Chapter 7]{MCMC}, more than one local minima of free energy function in terms of the magnetization indicate that the mixing time for the Glauber dynamics is exponential in $n$. Two metastability states are derived from the characteristic of the hyperbolic tangent function and the intermediate value theorem. This result coincides with other studies regarding the number of the ground states, such as \cite{recover1}.
\vspace{0.2cm}

In conclusion, we complete the full cutoff--metastability transition on the general multi-component Ising model. We explicitly characterize a critical inverse-temperature $\beta_{cr}$ and a constant $\alpha$ in \eqref{const} that only depend on the proportions of the group size and the interaction strength to describe the phase transition as follows.
\begin{enumerate}[label=(\roman*)] 
    \item When $\beta < \beta_{cr}$, there is a cutoff at $\alpha n\log n$ with a window of order $n$.
    \item When $\beta = \beta_{cr}$, the mixing time of order $n^{3/2}$ which is the optimal bound.
    \item When $\beta > \beta_{cr}$, the dynamics exhibits metastability and the mixing time is exponential in $n$.
\end{enumerate}

The remainder of this paper is organized as follows. In Section \ref{sec:TheModel}, we precisely define our model and introduce some general notations for the multi-component Ising model. In Section \ref{sec:MainResults}, we introduce our main results. Section \ref{sec:Pre} consists of the various properties of our model that are required to demonstrate the phase transition, such as contraction property or variance bounds. Section \ref{sec:high} is devoted to the proof of the cutoff phenomenon in the high temperature regime. In Section \ref{sec:critical}, we show the polynomial mixing time at the critical temperature. Subsequently, by proving the non-CLT property at the critical temperature, we validate that this bound is optimal. In Section \ref{sec:low}, we verify exponentially slow mixing time in the low temperature regime. 
\newline

\section{Proposed Model}\label{sec:TheModel}
In this section, we describe the multi-component Ising model. 
\subsection{Ising model and the Glauber dynamics}
Let $G=(V,E)$ be a graph with finite ($|V|=n$) vertices. The state space is defined by $\Omega=\{-1,+1\}^V$, and each element $\sigma\in\Omega$ is referred to as \emph{configuration}, i.e., each configuration is equivalent to an assignment of positive or negative spins on the sites in $V$. The nearest-neighbor \emph{Ising model} on $G$ with the inverse temperature $\beta\geq 0$ is defined on $\Omega$ by the Hamiltonian function
\[
H(\sigma)=-\sum_{v\sim w}K(v,w)\sigma(v)\sigma(w),
\]
where $x\sim y$ indicates that they are neighboring sites, and its corresponding Gibbs measure
\begin{align}\label{eq:Gibbs}
\mu(\sigma)=Z_{\beta}^{-1}\exp(-\beta H(\sigma)),
\end{align}
where $Z_\beta$ is a normalizing constant. We use a notation $\mu_n$ for the Gibbs measure to indicate $|V|=n$. The parameter $K(v,w)$ represents the interaction intensity between vertices $v$ and $w$. In this paper, we consider the ferromagnetic interaction that every $K(v,w)$ is positive. Notably, the higher the temperature rises, the weaker the influence of spin interaction is.

Throughout this paper, we employ the \emph{Glauber dynamics}, discrete-time Markov chains on the state space $\Omega$, with respect to the Gibbs distribution and denote this by $(\sigma_t)_{t\geq 0}$. It is equivalent to updating the spins by the following procedure: At each step, choose a site $v$ uniformly at random on $V$ and create a new configuration according to $\mu$ on the feasible set
\[
\Omega^v:=\{\sigma'\in\Omega : \sigma(w)=\sigma'(w) \text{ for every }w\neq v\},
\]
which implies that the probability of updating $v$ to $\pm 1$ is $r_{\pm}(S^v)$, where
\begin{align*}
r_\pm (s) := \frac{e^{\pm\beta s}}{e^{\beta s}+e^{-\beta s}}=\frac{1\pm\tanh{\beta s}}{2},
\end{align*}
and $S^v:=\sum_{w: w\sim v} K(v,w){\sigma (w)}$. The variable $S^v$ is referred to as the mean-field at site $v$. We use the notation of $\E_\sigma$ and $\prob_\sigma$ to indicate the expectation operator and probability measure with the starting state $\sigma_0:=\sigma$. Note that the Gibbs measure is the stationary measure of the Glauber dynamics.

\subsection{Markov chain mixing time and cutoff}
Our main concern is the distance between the distribution of the Markov chain by the Glauber dynamics and stationary distribution at time $t$. For analysis, we use the \emph{total-variance distance}
\[
d_n(t):=\max_{\sigma\in\Omega}\|\prob_\sigma(\sigma_t\in\cdot)-\mu_n\|_{\textrm{TV}}.
\]
The total-variation \emph{mixing time} is the number of steps until the total-variance distance is at most $\epsilon$, precisely written as
\begin{align*}
    t_{\text{mix}}^{(n)}(\epsilon):=\min\{t:d_n(t)\leq\epsilon\}.
\end{align*}
We work with $t_{\text{mix}}^{(n)}:=t_{\text{mix}}^{(n)}(1/4)$. In Section \ref{sec:high}, we observe a certain phenomenon known as \emph{cutoff}. The sequence of the Markov chain is said to exhibit \emph{cutoff} if for all $\epsilon\in(0,1)$,
\[
\lim_{n\rightarrow\infty}\frac{t_{\text{mix}}^{(n)}(\epsilon)}{t_{\text{mix}}^{(n)}(1-\epsilon)}=1.
\]
This equation implies that the total-variance distance falls drastically at the mixing time $t_{\text{mix}}^{(n)}$. In addition, to characterize the time scale the distance drops from near one to near zero, we say that a Markov chain exhibits a cutoff phenomenon with a \emph{window} of size $O(w_n)$ when $w_n=o\left(t_{\text{mix}}^{(n)}\right)$ and
\begin{align*}
    \lim_{\gamma\rightarrow\infty}\liminf_{n\rightarrow\infty} d_n(t_\text{mix}^{(n)}-\gamma w_n)&=1,\\
    \lim_{\gamma\rightarrow\infty}\limsup_{n\rightarrow\infty} d_n(t_\text{mix}^{(n)}+\gamma w_n)&=0.
\end{align*}

\subsection{Model}
We consider $G$ to be the complete graph with $n$ vertices: the case of when the interaction intensity is homogeneous such that $K(v,w)=1/n$ corresponds to the classical Curie--Weiss Ising model. In this paper, we generalize the classical Curie--Weiss Ising model to the multi-component Ising model, that is, the sites are partitioned into a finite number of groups and their interaction strength depends on the groups to which the sites belong. For some $m\in\mathbb{N}$, denote the $m$ number of groups by $G_1,G_2,\cdots,G_m$ with fixed proportion $p_1, p_2, \cdots , p_m$ to ensure that $|G_i|=np_i$ for each $1\leq i\leq m$, and the interaction is given by 
\begin{align*}
    K(v,w)=K_{ij}=k_{ij}/n,\quad\text{if }v\in G_i,~ w\in G_j.
\end{align*}
We would also call a group a \emph{block}. We consider the ferromagnetic case that we may assume $k_{ij}>0$. The interaction matrix is given by 
\[
\mathbf{J}_n:= \frac{1}{n}\begin{pmatrix}
k_{11}J_{np_1,np_1}  & k_{12}J_{np_1,np_2} & \cdots & k_{1m}J_{np_1,np_m}\\
k_{21}J_{np_2,np_1}  & k_{22}J_{np_2,np_2} & \cdots & k_{2m}J_{np_2,np_m}\\
\vdots  & \cdots& \cdots & \vdots\\
k_{m1}J_{np_m,np_1}  & k_{m2}J_{np_m,np_2} & \cdots & k_{mm}J_{np_m,np_m}
\end{pmatrix},
\]
where $J_{a,b}$ is the $a\times b$ matrix such that every element equals one. We use the notation on matrices
\begin{align}\label{eq:def_matrix}
\begin{split}
    \mathbf{K}&:=(k_{ij})_{1\leq i,j\leq m},\\
    \mathbf{P}&:=\text{diag}(p_1,p_2,\cdots,p_m),\\
    \mathbf{D}&:=\text{diag}(\sqrt{p_1},\cdots,\sqrt{p_m}),\\
    \mathbf{B}&:=(p_ik_{ij})_{1\leq i,j\leq m}=\mathbf{D}^2\mathbf{K}.
\end{split}
\end{align}
Henceforth, we would refer to the matrix $\mathbf{K}$ as the interaction matrix. We often employ the block \emph{magnetization} that are defined by the sum of the spins in each group, precisely,
\begin{align}\label{eq:def_mag}
\begin{split}
    & M^{(i)}(\sigma):=\sum_{v\in G_i} \sigma(v),~~ i=1,\cdots m,\\
    & \bm{M}(\sigma):=(M^{(1)}(\sigma),M^{(2)}(\sigma),\cdots,M^{(m)}(\sigma)),\quad \bm{S}(\sigma):=\frac{1}{n}\bm{M}(\sigma),
\end{split}
\end{align}
which are Markov chains (proven in Section \ref{sec:Pre}), and it will be often utilized instead of the configuration chain. This magnetization chain would play a key role in analyzing the mixing time. Moreover, observe that the Gibbs distribution is represented with the magnetization by
\begin{align}\label{eq:Gibbs2}
    \mu(\sigma)&=Z_{\beta}^{-1}\exp\left(\beta\sum_{v\sim w}K(v,w)\sigma(v)\sigma(w)\right)
    ={Z'}_{\beta}^{-1}\exp\left(\frac{\beta}{2n}\langle \bm{M}(\sigma),\mathbf{K} \bm{M}(\sigma)\rangle\right),
\end{align}
where $\langle\cdot,\cdot\rangle$ indicates inner product between two vectors and $Z'_\beta$ is a new normalizing constant, since 
\begin{align*}
    \frac{1}{n}\langle \bm{M}(\sigma),\mathbf{K} \bm{M}(\sigma)\rangle=\sum_{i,j=1}^m \frac{k_{ij}}{n}M^{(i)}(\sigma)M^{(j)}(\sigma)
    &=\sum_{v\sim w}K(v,w)\sigma(v)\sigma(w)+\sum_{v\in V}{K(v,v)\sigma(v)^2}\\
    &=\sum_{v\sim w}K(v,w)\sigma(v)\sigma(w)+\sum_{i=1}^m{k_{ii}p_i},
\end{align*}
of which second term is constant.

\section{Main Results}\label{sec:MainResults}
\subsection{Cutoff in the high temperature regime}
Our first result shows that the multi-component Ising model exhibits cutoff in the high temperature regime. We remark that the absence of interior interaction strength, i.e., $k_{ii}=0$ is allowed throughout this section. We first introduce the critical inverse-temperature $\beta_{cr}$, which only depends on the proportions of blocks and interaction strength regardless of the total numbers of sites $n$. We demonstrate that the Glauber dynamics for the multi-component Ising model exhibits cutoff at $\alpha n\log n$ with a window of size $O(n)$ in the high temperature regime $\beta<\beta_{cr}$, where $\alpha$ is a constant independent of $n$ determined by the inverse temperature $\beta$ and $\beta_{cr}$. Both $\beta_{cr}$ and $\alpha$ are characterized explicitly in \eqref{const} in terms of $k_{ij}$, $p_i$, and $\beta$. 
\begin{theorem}\label{Thm:HighTemp}
The Glauber dynamics for the multi-component Ising model shows a cutoff phenomenon at $t_n:=\alpha n\log n$ with window size $n$ in the high temperature regime $\beta<\beta_{cr}$, where $\alpha=\alpha(\beta,p_i,k_{ij})$ and $\beta_{cr}=\beta_{cr}(p_i,k_{ij})$ are constants defined in \eqref{const}.
\end{theorem}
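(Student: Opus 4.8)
\emph{Reduction and the relevant spectral quantity.}
The plan is to work entirely with the block magnetization vector $\bm M(\sigma_t)$, which by Section~\ref{sec:Pre} is itself a Markov chain. A one‑step drift computation shows that for $v\in G_i$ the mean‑field is $S^v=(\mathbf K\bm S_t)_i+O(1/n)$ with $\bm S_t=\bm M(\sigma_t)/n$, so
\[
\E\big[\bm M(\sigma_{t+1})-\bm M(\sigma_t)\mid\sigma_t\big]=\bm f(\bm S_t)+O(1/n),\qquad \bm f(\bm x):=\mathbf P\tanh(\beta\mathbf K\bm x)-\bm x ,
\]
with $\tanh$ applied coordinatewise. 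Since $\bm f(\bm x)=(\beta\mathbf B-\mathbf I)\bm x+O(\|\bm x\|^3)$ near the origin, the slow direction is governed by the Perron root $\lambda_{\max}$ of $\mathbf B=\mathbf P\mathbf K$ (equivalently of the symmetric matrix $\mathbf D\mathbf K\mathbf D$), which is real and positive by Perron--Frobenius since $k_{ij}>0$. I expect that $\beta_{cr}=1/\lambda_{\max}$, that $\theta:=1-\beta\lambda_{\max}>0$ on $\beta<\beta_{cr}$, and that $\alpha=\tfrac1{2\theta}$. Write $\bm\ell>0$ for the left Perron eigenvector of $\mathbf B$, so $\mathbf B^{\mathsf T}\bm\ell=\lambda_{\max}\bm\ell$; concretely $\bm\ell=\mathbf D^{-1}\bm u$ with $\bm u$ the Perron vector of $\mathbf D\mathbf K\mathbf D$.

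\emph{Upper bound.}
Run the grand (monotone) coupling from the all‑plus and all‑minus configurations, so $\sigma_t^-\le\sigma_t^+$ for all $t$ and $d_n(t)\le\prob(\sigma_t^+\ne\sigma_t^-)$, and note that under this coupling $\sigma_t^+=\sigma_t^-$ if and only if $\bm M(\sigma_t^+)=\bm M(\sigma_t^-)$. Set $W_t:=\langle\bm\ell,\bm M(\sigma_t^+)-\bm M(\sigma_t^-)\rangle\ge 0$. Applying the mean value theorem coordinatewise to $\tanh$ and then using $\sech^2\le1$ together with $\bm M(\sigma_t^+)-\bm M(\sigma_t^-)\ge0$, $\mathbf K\ge0$ and $\bm\ell>0$, the block‑dependent Jacobian collapses onto $\beta\mathbf B$ and one obtains the contraction
\[
\E[W_{t+1}\mid\mathcal F_t]\le\Big(1-\tfrac{\theta}{n}\Big)W_t,\qquad |W_{t+1}-W_t|\le 2\max_i\ell_i .
\]
Hence $\E[W_t]\le e^{-\theta t/n}W_0$ with $W_0=O(n)$, giving $\E[W_t]=O(\sqrt n)$ at $t=\alpha n\log n$. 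A first‑moment bound cannot force $W_t$ below the $\sqrt n$ scale, so to finish I would invoke the variance estimates of Section~\ref{sec:Pre}: once $\bm M(\sigma_t^+)$ and $\bm M(\sigma_t^-)$ lie within $O(\sqrt n)$, the coupled difference is a mean‑reverting chain with per‑step variance $\Theta(1/n)$ and coalesces within a further $O(n)$ steps with high probability. This yields $t_{\mathrm{mix}}^{(n)}(\epsilon)\le\alpha n\log n+C_\epsilon n$ and the upper half of the window.

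\emph{Lower bound.}
Start from the all‑plus configuration and track the scalar $Z_t:=\langle\bm\ell,\bm M(\sigma_t)\rangle$. Using $\tanh u\ge u-u^3/3$, one gets $\langle\bm\ell,\bm f(\bm S_t)\rangle\ge-\tfrac{\theta}{n}Z_t-C\|\bm M(\sigma_t)\|^3/n^3$; after an $O(n)$ burn‑in $\|\bm M(\sigma_t)\|$ has dropped to $O(\delta n)$ and, on the high‑probability event that $\bm M(\sigma_t)$ stays near the positive orthant (so $\|\bm M(\sigma_t)\|\lesssim Z_t$), the cubic term is subordinate to the linear one. This gives $\E_{\sigma_0}[Z_t]\ge c\,n\,e^{-\theta t/n}$ up to lower‑order losses, hence $\E_{\sigma_0}[Z_t]\gg\sqrt n$ at $t=\alpha n\log n-\gamma n$ for $\gamma$ large. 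Combined with $\Var_{\sigma_0}(Z_t)=O(n)$ (again from Section~\ref{sec:Pre}), $\E_\mu[Z]=0$ and $\Var_\mu(Z)=O(n)$, Chebyshev's inequality (the second‑moment method) gives $\liminf_n d_n(\alpha n\log n-\gamma n)\to1$ as $\gamma\to\infty$, i.e.\ the lower half of the window.

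\emph{Main difficulty.}
The crux, absent from the homogeneous models of \cite{Hee,link3-bipartite-potts}, is that each block feels a distinct mean‑field $(\mathbf K\bm S)_i$ and updates at its own rate $p_i$, so the drift is a genuinely coupled nonlinear system rather than a scalar recursion. The device that tames it is to linearize via the mean value theorem and then use the elementary inequalities $\sech^2\le1$ and $|\tanh u|\le|u|$ to reduce the varying Jacobian to the fixed matrix $\beta\mathbf B$, whose Perron data $(\lambda_{\max},\bm\ell)$ supplies the correct Lyapunov functional $\langle\bm\ell,\cdot\rangle$; making this reduction uniform over all reachable configurations is the delicate matrix bookkeeping. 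On the lower‑bound side the analogous obstacle is establishing the matching \emph{lower} estimate on $\E_{\sigma_0}[Z_t]$: one must a priori confine $\bm M(\sigma_t)$ to a neighborhood of the origin where the nonlinear correction is negligible, so as to rule out faster‑than‑exponential decay of $Z_t$.
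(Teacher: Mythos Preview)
Your lower bound is essentially the paper's argument: track the Perron linear form of the magnetization, use $\tanh u\ge u-u^3/3$ to rule out super‑exponential decay, combine with the $O(n)$ variance bound (Proposition~\ref{prop:VariBound}), and finish via Lemma~\ref{lem:lower}. The only cosmetic difference is that the paper starts from a carefully chosen small positive magnetization rather than from $\mathbf 1$ with a burn‑in.

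The upper bound, however, has a real gap that costs you the cutoff constant. You are right that under the monotone coupling from $\pm\mathbf 1$ one has $\sigma_t^+=\sigma_t^-$ iff $\bm M(\sigma_t^+)=\bm M(\sigma_t^-)$, and that $W_t$ contracts to $O(\sqrt n)$ in mean by time $t_n$. But the assertion that the difference then ``coalesces within a further $O(n)$ steps'' is false for the monotone coupling. Because both chains use the \emph{same} update randomness, when the total Hamming distance $d_t$ is small the probability that $W_t$ changes at all is only $\Theta(d_t/n)$; hence $\Var(W_{t+1}\mid\mathcal F_t)=\Theta(d_t/n)$, not $\Theta(1)$, and Lemma~\ref{Lem:SupMar} with $\sigma^2=\Theta(1/n)$ and $k=O(\sqrt n)$ gives nothing useful. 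Equivalently, from $d_{t_n}\asymp\sqrt n$ the process $d_t$ is a biased birth--death chain that jumps with rate $\Theta(d_t/n)$, and the expected time to reach $0$ is of order $\sum_{j\le\sqrt n}n/j=\Theta(n\log n)$. Your scheme therefore yields an upper bound near $2\alpha n\log n$, not $\alpha n\log n$ with an $O(n)$ window.

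The paper resolves this by abandoning monotonicity after time $t_n$. In Lemma~\ref{lem:MagCoup} the two chains are run \emph{independently} on blocks whose magnetization gap still exceeds $1$, which forces $\Var(Y_{t+1}\mid\mathcal F_t)\ge c>0$ and lets Lemma~\ref{Lem:SupMar} deliver $\tau_{\mathrm{mag}}\le t_n+O(n)$. The price is that equal magnetizations no longer imply equal configurations, so a second stage is required: the paper passes to the $2m$‑coordinate chain (Lemma~\ref{TV}) and builds a further coupling (Lemma~\ref{lem:2mCoup}) that matches these coordinates in $O(n)$ additional steps, after first steering the chain into a set $\Omega_0$ of ``good'' configurations. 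This two‑stage architecture, not the Perron contraction itself, is the substantive content of the upper bound, and it is what your sketch is missing.
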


Recently, there have been considerable results related to the multi-component Ising model, including various limit theorems. For example, it is known that the sequence
\[
\tilde{\bm{M}}_n=\left( \frac{1}{|G_i|}\sum_{v\in G_i}\sigma(v) \right)_{i=1,\cdots,m}
\]
satisfies an LDP under the Gibbs measure. See \cite{Block3} for a detailed description. 
The high temperature regime and low temperature regime in \cite{Block3} coincide with ours; hence, it can be applied to our model. In addition, \cite[Theorem 1.3]{Block3} implies the CLT in the high temperature regime that
\[
\left(\frac{1}{\sqrt{np_i}}\sum_{v\in G_i}\sigma(v)\right)_{i=1,\cdots,m} 
\]
converges to the normal distribution if $\mathbf{K}$ is a positive definite matrix. Moreover, according to \cite[Theorem 4]{Block3}, one can establish convergence rate in the CLT using the exchangeable pair approach of Stein’s method.

\subsection{Power-law at the critical temperature}\label{subsec:result_critical}
At the critical temperature, we not only prove that $O(n^{3/2})$ is bound on mixing time, but also that it is the optimal bound. {In this section, we assume that the interaction matrix $\mathbf{K}$ is positive definite to warrant the optimality. Generally, this condition ensures that the interaction within the groups dominates the interaction among the groups.}
\begin{theorem}\label{Thm:CriticalTemp}
When $\beta=\beta_{cr}$, there exist constants $C_1$ and $C_2$ such that the mixing time of Glauber dynamics for the multi-component Ising model satisfies
\[
C_1 n^{3/2}\leq t_{mix}(n)\leq C_2 n^{3/2}.
\]
\end{theorem}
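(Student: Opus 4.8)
The plan is to prove the upper and lower bounds separately, in both cases working with the block‑magnetization chain $\bm{M}_t$ (Markov by Section~\ref{sec:Pre}) and the rescaled coordinates $\bm{u}_t:=\mathbf{P}^{-1}\bm{M}_t/n\in[-1,1]^m$ and $\bm{y}_t:=\mathbf{D}\bm{u}_t=\mathbf{D}^{-1}\bm{M}_t/n$ (with $\mathcal{F}_t$ the natural filtration). The starting point is the one‑step drift, obtained by averaging the spin update over the uniformly chosen site: $\E[\bm{M}_{t+1}-\bm{M}_t\mid\mathcal{F}_t]=\mathbf{P}\big(\tanh(\beta\mathbf{K}\mathbf{P}\bm{u}_t)-\bm{u}_t\big)+O(1/n)$ ($\tanh$ coordinatewise), which after the Taylor expansion $\tanh x=x-\tfrac13x^3+O(x^5)$ and $\mathbf{P}=\mathbf{D}^2$ becomes
\[
\E[\bm{y}_{t+1}-\bm{y}_t\mid\mathcal{F}_t]=\frac1n(\beta\,\mathbf{D}\mathbf{K}\mathbf{D}-I)\,\bm{y}_t-\frac{\beta^3}{3n}\,\mathbf{D}\big((\mathbf{K}\mathbf{P}\bm{u}_t)^{\circ3}\big)+O\!\left(\tfrac{\|\bm{y}_t\|^5}{n}\right)+O\!\left(\tfrac1{n^2}\right),
\]
where $(\cdot)^{\circ3}$ is the entrywise cube. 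Since $\mathbf{D}\mathbf{K}\mathbf{D}$ is symmetric with strictly positive entries, Perron--Frobenius yields a simple top eigenvalue with positive unit eigenvector $\bm{w}_1$; by the definition $\beta_{cr}\lambda_{\max}(\mathbf{D}\mathbf{K}\mathbf{D})=1$, the matrix $\beta_{cr}\mathbf{D}\mathbf{K}\mathbf{D}-I$ is negative semidefinite with kernel $\mathbb{R}\bm{w}_1$ and spectral gap $\delta:=1-\beta_{cr}\lambda_2(\mathbf{D}\mathbf{K}\mathbf{D})>0$. This is the structural fact I would use throughout: along $\bm{w}_1$ the linear drift vanishes and the cubic term governs the slow evolution, while in the orthogonal complement the drift contracts linearly at rate $\delta/n$ per step.

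\emph{Upper bound.} I would take the ``square sum'' $\Phi_t:=\|\bm{y}_t\|^2=n^{-2}\langle\bm{M}_t,\mathbf{P}^{-1}\bm{M}_t\rangle$, expand $\Phi_{t+1}-\Phi_t=2\langle\bm{y}_t,\bm{y}_{t+1}-\bm{y}_t\rangle+\|\bm{y}_{t+1}-\bm{y}_t\|^2$, insert the drift, use the variance bound $\E[\|\bm{y}_{t+1}-\bm{y}_t\|^2\mid\mathcal{F}_t]=O(1/n^2)$ (Section~\ref{sec:Pre}), and split $\bm{y}_t=y_t^{\parallel}\bm{w}_1+\bm{y}_t^{\perp}$ to reach
\[
\E[\Phi_{t+1}\mid\mathcal{F}_t]\ \le\ \Phi_t-\frac{2\delta}{n}\,\|\bm{y}_t^{\perp}\|^2-\frac{c_1}{n}\,(y_t^{\parallel})^4+\frac{c_2}{n}\,\|\bm{y}_t^{\perp}\|\,\|\bm{y}_t\|^3+\frac{C}{n^2},
\]
with $c_i,C>0$ depending only on $\mathbf{K},\mathbf{P}$ (the $-c_1(y_t^{\parallel})^4/n$ arises because the cubic term is strictly dissipative along the positive ray $\mathbb{R}_{\ge0}\bm{w}_1$, using $\mathbf{K}>0$ entrywise). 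I would first argue that in $O(n\log n)=o(n^{3/2})$ steps the contracting term forces $\|\bm{y}_t^{\perp}\|$ down to its equilibrium scale $O(n^{-1/2})$ and keeps it there, after which the cross term is absorbed and the recursion reduces to $\E[\Phi_{t+1}\mid\mathcal{F}_t]\le\Phi_t-\tfrac{c}{n}\Phi_t^2+\tfrac{C}{n^2}$; taking expectations, applying Jensen, and iterating this scalar recursion (the ``tower property'' computation) gives $\E\Phi_t=O(n^{-1/2})$ once $t\ge C'n^{3/2}$, i.e.\ $\|\bm{M}_t\|=O(n^{3/4})$ with high probability after $O(n^{3/2})$ steps — the equilibrium order. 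To convert this into a total‑variation statement I would run a coupling adapted to the block structure: the subcritical modes of two copies come together in $O(n\log n)$ steps, the remaining near‑critical mode difference is a diffusivity‑$\Theta(1)$ random walk confined to the scale‑$n^{3/4}$ window and hence is eliminated within $O(n^{3/2})$ further steps, and once all block magnetizations coincide the configurations couple within $O(n\log n)$ more steps by coupon collecting (the conditional law of $\sigma$ given $\bm{M}$ being uniform, since $\mu_n$ depends on $\sigma$ only through $\bm{M}$). Stringing these phases together gives $d_n(C_2n^{3/2})\le\tfrac14$, that is $t_{mix}(n)\le C_2n^{3/2}$.

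\emph{Lower bound.} I would start from the all‑plus configuration (write $\prob_+$ for its law), set $\bm{r}:=\mathbf{D}^{-1}\bm{w}_1$ (the Perron eigenvector of $\mathbf{K}\mathbf{P}$), and track $\widehat{W}_t:=\langle\bm{M}_t,\bm{r}\rangle$ with $\widehat{W}_0=\Theta(n)$. After the subcritical modes have relaxed (in $O(n\log n)$ steps) the drift expansion gives $\E[\widehat{W}_{t+1}-\widehat{W}_t\mid\mathcal{F}_t]=-\tfrac{a}{n^3}\widehat{W}_t^3+O(1/n)$ for a constant $a>0$, the $O(1/n)$ being lower order while $\widehat{W}_t\gg n^{3/4}$, together with $\Var[\widehat{W}_{t+1}-\widehat{W}_t\mid\mathcal{F}_t]=\Theta(1)$. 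Comparing $\widehat{W}_t$ with the solution $w(t)=(w(0)^{-2}+2at/n^3)^{-1/2}$ of $\dot w=-aw^3/n^3$, which at $t=\varepsilon n^{3/2}$ is of order $\varepsilon^{-1/2}n^{3/4}$, and bounding the accumulated martingale fluctuation by $O(\sqrt{\varepsilon}\,n^{3/4})$, I would conclude that for every $K$ there is $\varepsilon>0$ with $\prob_+(\widehat{W}_{\varepsilon n^{3/2}}\ge Kn^{3/4})\to1$. The complementary input is the non‑central limit theorem of Section~\ref{sec:critical} (obtained via a Hubbard--Stratonovich transform, where positive‑definiteness of $\mathbf{K}$ enters): $n^{-3/4}\bm{M}$ converges in law under $\mu_n$ to a non‑degenerate limit, so $n^{-3/4}\widehat{W}$ is tight and $\mu_n(\widehat{W}\ge Kn^{3/4})\to0$ as $K\to\infty$, uniformly in large $n$. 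Choosing $K$ large and then $\varepsilon$ small, the event $\{\widehat{W}\ge Kn^{3/4}\}$ separates $\prob_+(\sigma_{\varepsilon n^{3/2}}\in\cdot)$ from $\mu_n$, so $d_n(\varepsilon n^{3/2})\to1$ and $t_{mix}(n)\ge C_1n^{3/2}$.

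\emph{Main obstacle.} I expect the crux to be the heterogeneity of the interaction, which has no counterpart in the classical Curie--Weiss model: the magnetization drift is not proportional to a single scalar chain, so one must decompose it along the eigenbasis of $\beta_{cr}\mathbf{D}\mathbf{K}\mathbf{D}$ into the critical Perron mode — slow, governed by the cubic Taylor remainder of $\tanh$ — and the subcritical modes — relaxing linearly on the $n\log n$ scale. Keeping the cubic cross‑terms between these modes under control while the fast modes are still active, pinning down the sign and the constant of the cubic drift, and matching that constant to the $n^{3/4}$ fluctuation scale produced by the non‑central limit theorem (which in turn requires the correct Hubbard--Stratonovich change of variables and positive‑definiteness of $\mathbf{K}$) is the delicate technical heart of both bounds; the multi‑dimensional coupling needed to bring all $m$ block magnetizations into agreement is an additional, but more routine, complication.
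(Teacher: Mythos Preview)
Your proposal follows the same overall architecture as the paper---a quadratic Lyapunov function for the magnetization with a cubic recursion for the upper bound, and the non-CLT via Hubbard--Stratonovich for the lower bound---but the execution differs on both sides. For the upper bound you work with the random variable $\Phi_t=\|\bm{y}_t\|^2$ and an explicit spectral split $\bm{y}_t=y_t^\parallel\bm{w}_1+\bm{y}_t^\perp$; the paper instead tracks the \emph{deterministic} quantity $\eta_t:=\sum_i p_i(\E|X_t^{(i)}|)^2$ with $X_t^{(i)}=\sum_j k_{ij}S_t^{(j)}$, and derives the recursion $\eta_{t+1}-\eta_t\le -(\theta/n)\eta_t^2+C/n^2$ by an iterated tower property together with Cauchy--Schwarz for conditional expectations---the absolute values there are what circumvent the sign issues in the cubic cross terms that you instead control through $c_2\|\bm{y}_t^\perp\|\|\bm{y}_t\|^3/n$. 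One caution: your claim that $\|\bm{y}_t^\perp\|$ reaches $O(n^{-1/2})$ in $O(n\log n)$ steps is not quite right while $y_t^\parallel$ is still order one, since the cubic forcing from the parallel mode only lets the perpendicular part relax to $O((y_t^\parallel)^3)$; the fix is to first bring all of $\Phi_t$ below a small constant in $O(n)$ steps (using just $\tanh x<x$), exactly the preliminary stage the paper carries out for $\eta_t$. After $\E\Phi_t=O(n^{-1/2})$ the paper does not couple mode by mode but applies the supermartingale hitting-time lemma (Lemma~\ref{Lem:SupMar}) to $n^{5/4}\bX_t^\top\mathbf{P}\bX_t$ to drive every $|S_t^{(i)}|$ down to $1/n$, then runs the modified monotone coupling and Lemma~\ref{Lem:CoupleSameMag}. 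For the lower bound you start from all-plus and use an ODE comparison for the Perron projection, which forces you to handle the initial transient where the Taylor expansion of $\tanh$ is invalid and the subcritical modes are large; the paper takes the shorter route of starting from a state already at the $n^{-1/4}$ scale for $\sum_i a_i S^{(i)}/\sqrt{p_i}$ and bounding the hitting time of the typical set $\Upsilon_m$ by a direct second-moment estimate $\E[Z_t^2]\le ct/n^2$ on $Z_t=\tilde U_0-\tilde U_{t\wedge\tau}$. Both lower-bound arguments ultimately rest on Proposition~\ref{3.3}.
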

We employ a proper coupling argument to demonstrate the upper bound. To verify the lower bound, we prove a limit theorem stated in Proposition \ref{3.3} based on a suitable Hubbard--Stratonovich transform. As described in Section \ref{sec:critical}, at the critical temperature $\beta=\beta_{cr}$, the largest eigenvalue of a positive symmetric matrix $\beta \mathbf{D}\mathbf{K}\mathbf{D}$ is one. Thus, the spectral theorem implies that it can be decomposed by
\[
\beta \mathbf{D}\mathbf{K}\mathbf{D}=\mathbf{V}\text{diag}(\lambda_1,\cdots,\lambda_{m-1},1)\mathbf{V}^\top,
\]
with the eigenvalues $0<\lambda_1\leq\lambda_2\leq\cdots\leq\lambda_{m-1}<\lambda_m =1$ and an orthogonal matrix $\mathbf{V}$ (see Section \ref{sec:critical} for details). Define matrices
\begin{align}\label{eq:def_matrix2}
\begin{split}
    \mathbf{\Gamma}_n&:=\text{diag}(n^{-1/2},n^{-1/2},\cdots,n^{-1/2},n^{-3/4}),\\
     \mathbf{C}_n &:= \text{diag}(\lambda_1,~\lambda_2,~\cdots,\lambda_{m-2},~\lambda_{m-1},~n^{1/2}),\\
     \bm{U}_n(\sigma)&:= \mathbf{\Gamma}_n \mathbf{V}^\top\mathbf{D}^{-1}\bm{M}(\sigma).
\end{split}
\end{align}
Then, we obtain the following result with regard to $\bm{U}_n$ under the Gibbs measure $\mu_n$.
\begin{proposition}\label{3.3}
Let $\bm{Y}_n\sim\mathcal{N}(0,\mathbf{C}_n^{-1})$. When $\beta=\beta_{cr}$, $\mu_n^{-1}(\bm{U}_n)+\bm{Y}_n$ converges in  distribution to a probability measure with density
\begin{align*}
    f(\bx):=\frac{1}{\tilde{Z}}\exp{\left(-\frac{1}{2}\displaystyle\sum_{i=1}^{m-1}{\Big((\lambda_i - \lambda_i^2) x_i^2\Big)}
-\frac{x_m^4}{12}\displaystyle\sum_{i=1}^{m}\frac{(V_{im})^4}{p_i}\right)}d\bx,
\end{align*}
where $\tilde{Z}$ is a normalizing constant.
\end{proposition}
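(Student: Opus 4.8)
The plan is to convert the proposition into an explicit Laplace-type asymptotic via a Hubbard--Stratonovich transformation, exploiting that in the $\bm{U}_n$-coordinates the Gibbs exponent becomes \emph{exactly} $\tfrac12\langle\bm{U}_n,\mathbf{C}_n\bm{U}_n\rangle$. Write $\mathbf{\Lambda}:=\mathrm{diag}(\lambda_1,\dots,\lambda_{m-1},1)$, so that $\beta\mathbf{D}\mathbf{K}\mathbf{D}=\mathbf{V}\mathbf{\Lambda}\mathbf{V}^\top$ and $\beta\mathbf{K}=\mathbf{D}^{-1}\mathbf{V}\mathbf{\Lambda}\mathbf{V}^\top\mathbf{D}^{-1}$. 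Since $\bm{U}_n(\sigma)=\mathbf{\Gamma}_n\mathbf{V}^\top\mathbf{D}^{-1}\bm{M}(\sigma)$ and, directly from \eqref{eq:def_matrix2}, $\tfrac1n\mathbf{\Gamma}_n^{-1}\mathbf{\Lambda}\mathbf{\Gamma}_n^{-1}=\mathbf{C}_n$, a short computation gives
\[
\frac{\beta}{2n}\langle\bm{M}(\sigma),\mathbf{K}\bm{M}(\sigma)\rangle=\frac{1}{2n}\big\langle\bm{U}_n(\sigma),\mathbf{\Gamma}_n^{-1}\mathbf{\Lambda}\mathbf{\Gamma}_n^{-1}\bm{U}_n(\sigma)\big\rangle=\frac12\langle\bm{U}_n(\sigma),\mathbf{C}_n\bm{U}_n(\sigma)\rangle .
\]
Hence under $\mu_n$ the probability that $\bm{U}_n=\bm{u}$ is proportional to the number of configurations $\sigma$ with $\bm{U}_n(\sigma)=\bm{u}$ times $e^{\frac12\langle\bm{u},\mathbf{C}_n\bm{u}\rangle}$.

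Next I would introduce the independent Gaussian $\bm{Y}_n\sim\mathcal N(0,\mathbf{C}_n^{-1})$ and compute the Lebesgue density of $\bm{U}_n+\bm{Y}_n$ directly: expanding the Gaussian kernel and cancelling $\tfrac12\langle\bm{U}_n,\mathbf{C}_n\bm{U}_n\rangle$ against the Gibbs weight gives, with a normalising constant $\mathcal Z_n$,
\[
f_{\bm{U}_n+\bm{Y}_n}(\bx)=\frac{1}{\mathcal Z_n}\,e^{-\frac12\langle\bx,\mathbf{C}_n\bx\rangle}\sum_{\sigma}e^{\langle\bx,\mathbf{C}_n\bm{U}_n(\sigma)\rangle}.
\]
This is exactly the Hubbard--Stratonovich linearisation: the convolution by $\bm{Y}_n$ is the smoothing that makes the quadratic Gibbs exponent disappear (it is also the vector analogue of the device behind the classical non-CLT in \cite{E}). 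Writing $\bm\eta:=\mathbf{D}^{-1}\mathbf{V}\mathbf{\Gamma}_n\mathbf{C}_n\bx$ one has $\langle\bx,\mathbf{C}_n\bm{U}_n(\sigma)\rangle=\langle\bm\eta,\bm{M}(\sigma)\rangle$, so the configuration sum factorises over sites,
\[
\sum_\sigma e^{\langle\bm\eta,\bm{M}(\sigma)\rangle}=\prod_{i=1}^{m}(2\cosh\eta_i)^{np_i}=2^n\exp\Big(n\sum_{i=1}^{m}p_i\log\cosh\eta_i\Big),
\]
where $\mathbf{\Gamma}_n\mathbf{C}_n=\mathrm{diag}(n^{-1/2}\lambda_1,\dots,n^{-1/2}\lambda_{m-1},n^{-1/4})$ yields $\eta_i=\tfrac{1}{\sqrt{p_i}}\big(n^{-1/2}\sum_{j<m}\lambda_jV_{ij}x_j+n^{-1/4}V_{im}x_m\big)$.

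I would then Taylor expand $\log\cosh\eta_i=\tfrac{\eta_i^2}{2}-\tfrac{\eta_i^4}{12}+O(\eta_i^6)$ for fixed $\bx$ and collect, using orthonormality of the columns of $\mathbf{V}$ (so $\sum_iV_{ij}V_{ik}=\delta_{jk}$). One gets $n\sum_ip_i\tfrac{\eta_i^2}{2}=\tfrac12\sum_{j<m}\lambda_j^2x_j^2+\tfrac{\sqrt n}{2}x_m^2$ — the $x_jx_m$ cross terms vanish because $\sum_iV_{ij}V_{im}=0$ for $j<m$ — and subtracting $\tfrac12\langle\bx,\mathbf{C}_n\bx\rangle=\tfrac12\sum_{j<m}\lambda_jx_j^2+\tfrac{\sqrt n}{2}x_m^2$ the $\tfrac{\sqrt n}{2}x_m^2$ contributions cancel \emph{exactly}; this is precisely where the criticality hypothesis $\lambda_m=1$ is used, and it leaves $-\tfrac12\sum_{j<m}(\lambda_j-\lambda_j^2)x_j^2$. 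In the quartic term only the $n^{-1/4}V_{im}x_m$ part of $\eta_i$ survives the factor $n$, so $-\tfrac{n}{12}\sum_ip_i\eta_i^4\to-\tfrac{x_m^4}{12}\sum_i\tfrac{(V_{im})^4}{p_i}$, while all remaining cross contributions and the $O(\eta_i^6)$ terms are $o(1)$. Thus $\mathcal Z_n f_{\bm{U}_n+\bm{Y}_n}(\bx)\to\exp\big(-\tfrac12\sum_{i<m}(\lambda_i-\lambda_i^2)x_i^2-\tfrac{x_m^4}{12}\sum_i(V_{im})^4/p_i\big)$ pointwise; since $0<\lambda_i<1$ for $i<m$ and $\sum_i(V_{im})^4/p_i>0$ (as $\sum_iV_{im}^2=1$) this limit is integrable and equals $\tilde Z f$.

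Finally, to upgrade pointwise density convergence to convergence in distribution it remains to dominate uniformly in $n$. The elementary inequality $\log\cosh t\le\tfrac{t^2}{2}-c\min(t^4,t^2)$ for a fixed $c>0$ turns the exact $x_m^2$-cancellation into genuine decay: for $|x_m|\le n^{1/4}$ it produces a quartic factor $e^{-c'x_m^4}$, and for $|x_m|\ge n^{1/4}$ the recovered $\sqrt n\,x_m^2$-type term beats the Gaussian and yields $e^{-c'x_m^2}$, while the directions $i<m$ are controlled by the manifest Gaussian weight. This gives a bound $f_{\bm{U}_n+\bm{Y}_n}(\bx)\le C\,e^{-\frac12\sum_{i<m}(\lambda_i-\lambda_i^2)x_i^2}e^{-c\min(x_m^4,x_m^2)}$, integrable and independent of $n$; Scheff\'e's lemma then gives $\mathcal Z_n\to\tilde Z$, $L^1$-convergence of the densities, and hence the claimed weak convergence. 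I expect this last uniform tail estimate — propagating the criticality cancellation into effective quartic/quadratic decay uniformly over $\bx$ that may grow with $n$, while simultaneously keeping the subleading $n^{-1/2}$-part of $\eta_i$ and all cross terms under control — to be the main obstacle; the remaining steps are algebra along the standard Hubbard--Stratonovich route.
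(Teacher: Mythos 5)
Your Hubbard--Stratonovich set-up, the factorisation $\sum_\sigma e^{\langle\bm\eta,\bm M(\sigma)\rangle}=\prod_i(2\cosh\eta_i)^{np_i}$, the Taylor expansion of $\log\cosh$, and the identification of the surviving quadratic term $\tfrac12\sum_{j<m}(\lambda_j-\lambda_j^2)x_j^2$ and quartic term $\tfrac{x_m^4}{12}\sum_i (V_{im})^4/p_i$ after the critical $\tfrac{\sqrt n}{2}x_m^2$ cancellation are exactly the paper's route, and your algebra checks out (in particular the use of $\mathbf{\Gamma}_n\mathbf{C}_n=\mathrm{diag}(n^{-1/2}\lambda_1,\dots,n^{-1/2}\lambda_{m-1},n^{-1/4})$ and the orthogonality of $\mathbf V$). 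Where you genuinely diverge from the paper is in upgrading pointwise convergence of the integrands to weak convergence. The paper splits $\mathbb R^m$ into an inner ball $B_R(0)$, an intermediate shell $F_r\setminus B_R(0)$ on which it rewrites $\Phi_n(\bm x)=n\tilde\Phi_n(\tilde{\bm x})$ in rescaled coordinates and bounds below by a quadratic form, and an outer region $F_r^c$ on which $\tilde\Phi_n$ is bounded away from zero, then sends $R\to\infty$; you instead propose a single $n$-uniform integrable majorant obtained from $\log\cosh t\le\tfrac{t^2}{2}-c\min(t^4,t^2)$ and conclude by Scheff\'e. Your version avoids the rescaling/region bookkeeping and is arguably cleaner, but the step you flag as the ``main obstacle'' is indeed the one that needs spelling out: the functions $\eta_i$ mix $x_m$ with the $x_j$ ($j<m$), so on the set where $n^{-1/2}\sum_{j<m}\lambda_jV_{ij}x_j$ nearly cancels $n^{-1/4}V_{im}x_m$ the term $n\sum_i p_i\min(\eta_i^4,\eta_i^2)$ gives no decay in $x_m$ at all. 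The cure is a dichotomy you only gesture at: if $\sum_{j<m}x_j^2\gtrsim\min(x_m^4,x_m^2)$ the retained Gaussian factor already dominates; otherwise $\|\bm z\|_2\ll\|\bm y\|_2$ with $\bm z:=n^{-1/2}\sum_{j<m}\lambda_jx_j\bm v_j$ and $\bm y:=n^{-1/4}x_m\bm v_m$, hence $\sum_ip_i\eta_i^2=\|\bm y+\bm z\|_2^2\ge\tfrac14 n^{-1/2}x_m^2$, and then (splitting on $|\eta_i|\lessgtr 1$ and using $\sum p_i\eta_i^4\ge(\sum p_i\eta_i^2)^2$ by Cauchy--Schwarz with $\sum p_i=1$) one gets $n\sum_ip_i\min(\eta_i^4,\eta_i^2)\gtrsim\min(x_m^4,x_m^2)$. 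With that inserted, your argument is complete and yields a genuinely alternative proof of the domination step.
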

\begin{remark}
By the previous proposition, the $(m-1)$-dimensional vector $\Big({U}_n^{(j)}\Big)_{j=1,\cdots,m-1}$ converges to a normal distribution with covariance matrix
\begin{align}\label{eq:Sigma}
\mathbf{\Sigma}:=\text{diag}\left((1-\lambda_1)^{-1},\cdots,(1-\lambda_{m-1})^{-1}\right),    
\end{align}
and the random variable ${U}_n^{(m)}$ converges to a distribution with Lebesgue-density
\[
\frac{1}{Z}\exp\left(-\left(\frac{1}{12}\sum_{i=1}^{m}(V_{im})^4/p_i\right)x^4\right)dx,
\]
where $Z$ is a normalizing constant, under the stationary measure $\mu_n$.
\end{remark}
In the classical Curie--Weiss model, there are substantial results concerning the limit theorems, such as CLT as shown in \cite{E} and \cite{GS}. In particular, it is well-known that $n^{-3/4}\sum_{v\in V}\sigma(v)$ converges weakly to a non-trivial measure at the critical temperature of $\beta=1$ (see \cite{Curie-Ising-classic}). The multi-component version of this fact was treated in \cite{Block3} and referred to as non-CLT in the restricted condition of the uniform case. Above Proposition \ref{3.3} proposes the generalized version of non-CLT with respect to the magnetization vector.

\subsection{Metastability in the low temperature regime}
Finally, we prove the exponential mixing time in the low temperature regime. The entropy of the system is maximum when half of the spins is $+1$ and the other half is $-1$. For a sufficiently high temperature, the entropy would dominate the energy barrier, and the average magnetization would be zero. In contrast to this situation, in the low temperature regime, it is difficult to overcome the energy barrier, which mathematically speaking denotes the existence of at least two local minima of free energy. We show the existence of two local minima to conclude exponentially long mixing time.
\begin{theorem}\label{Thm:LowTemp}
When $\beta>\beta_{cr}$, the Glauber dynamics for the multi-component Ising model has an exponential mixing time.
\end{theorem}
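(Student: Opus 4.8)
The plan is to obtain the exponential lower bound by a bottleneck (conductance) argument, exactly in the spirit of \cite{Curie-Potts1} and \cite[Chapter 7]{MCMC}: I will exhibit a set $S\subseteq\Omega$ with $\mu_n(S)\le 1/2$ whose edge boundary under the Glauber dynamics carries exponentially small $\mu_n$--mass, and then invoke the bottleneck–ratio bound $t_{\mathrm{mix}}(n)\ge \mu_n(S)\big/\big(4\,Q(S,S^c)\big)$, with $Q(S,S^c)=\sum_{\sigma\in S,\ \sigma'\in S^c}\mu_n(\sigma)P(\sigma,\sigma')$. To locate this bottleneck I first record the free energy of the model. Writing $x_i:=s_i/p_i\in[-1,1]$ for the mean spin in block $i$ and $I(x):=\tfrac{1+x}{2}\log(1+x)+\tfrac{1-x}{2}\log(1-x)$, a Stirling estimate on the number of configurations with prescribed block magnetizations together with \eqref{eq:Gibbs2} gives, uniformly on the box $\prod_i[-p_i,p_i]$,
\[
\mu_n\!\left(\bm{M}(\sigma)=n\bm{s}\right)=\exp\!\big(-n\,[\,F(\bm{s})-\min F\,]+o(n)\big),\qquad
F(\bm{s}):=\sum_{i=1}^m p_i\,I\!\left(\tfrac{s_i}{p_i}\right)-\frac{\beta}{2}\langle\bm{s},\mathbf{K}\bm{s}\rangle .
\]
This $F$ is the multi–component large–deviation rate function; alternatively it may be quoted from the LDP of \cite{Block3}. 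Thus $\mu_n$ concentrates exponentially near the minimizers of $F$, and everything reduces to understanding those minimizers when $\beta>\beta_{cr}$.

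\textbf{The landscape for $\beta>\beta_{cr}$.} Since $I''(0)=1$, the Hessian of $F$ at the origin is $\mathbf{P}^{-1}-\beta\mathbf{K}$, which is congruent — via $\mathbf{D}=\mathbf{P}^{1/2}$ — to $\mathbf{I}-\beta\mathbf{D}\mathbf{K}\mathbf{D}$; as recalled in Section~\ref{sec:critical}, the largest eigenvalue of $\beta\mathbf{D}\mathbf{K}\mathbf{D}$ equals $1$ at $\beta=\beta_{cr}$ and exceeds $1$ for $\beta>\beta_{cr}$, so in the low temperature regime $\mathrm{Hess}\,F(0)$ has a negative eigenvalue. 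Hence $F(0)=0$ is not a local minimum, so $\min F<0$, and by compactness the minimum is attained at some $\bm{s}^{\circ}\neq 0$; by the symmetry $F(\bm{s})=F(-\bm{s})$ both $\pm\bm{s}^{\circ}$ are global minimizers — a symmetric pair of wells. To exhibit a nonzero well by hand (as announced in the introduction) I would also analyze the mean-field equation $x_i=\tanh\!\big(\beta(\mathbf{K}\mathbf{P}\bm{x})_i\big)$: because $k_{ij}>0$, Perron–Frobenius provides a positive eigenvector $\bm\phi>0$ of $\mathbf{K}\mathbf{P}$ with eigenvalue $1/\beta_{cr}$, and since $\tanh$ is increasing with $\tanh'(0)=1$ and $\beta/\beta_{cr}>1$, one has $\tanh\!\big(\beta\mathbf{K}\mathbf{P}(\varepsilon\bm\phi)\big)\ge\varepsilon\bm\phi$ coordinatewise for small $\varepsilon>0$; monotonicity of $\bm{x}\mapsto\tanh(\beta\mathbf{K}\mathbf{P}\bm{x})$ then forces the iteration started from $\varepsilon\bm\phi$ to increase to a fixed point $\bm{x}^{\ast}\ge\varepsilon\bm\phi>0$, a nonzero solution of the mean-field equation, consistent with the picture of two distinct wells separated by the saddle at $0$.

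\textbf{From a well to a bottleneck.} Choose a hyperplane $H=\{\bm{s}:\langle\bm{w},\bm{s}\rangle=0\}$ through the origin that misses every global minimizer of $F$ and has minimizers in each open half-space — possible since the global–minimizer set is finite, symmetric, and excludes $0$ (for instance $\bm{w}=\bm{s}^{\circ}$ works when $\pm\bm{s}^{\circ}$ are the only minimizers). By compactness and continuity, $\delta:=\min_{\bm{s}\in H}F(\bm{s})-\min F>0$. Set $S:=\{\sigma:\langle\bm{w},\bm{M}(\sigma)\rangle>0\}$. The global spin–flip symmetry of $\mu_n$ gives $\mu_n(S)\le 1/2$, while $\mu_n(S)\ge\mu_n\!\big(\bm{M}(\sigma)\approx n\bm{s}^{\circ}\big)=e^{-o(n)}$ by the variational estimate. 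A single Glauber step changes $\bm{M}(\sigma)$ by $O(1)$, so any transition from $S$ to $S^c$ departs from a configuration with $\langle\bm{w},\bm{M}(\sigma)\rangle=O(1)$, i.e. with $\bm{S}(\sigma)$ within $O(1/n)$ of $H$; therefore
\[
Q(S,S^c)\ \le\ \mu_n\!\big(0<\langle\bm{w},\bm{S}(\sigma)\rangle\le C/n\big)\ \le\ e^{-n\delta+o(n)} ,
\]
again by the variational estimate (a polynomial number of magnetization values, each of $\mu_n$–mass at most $e^{-n\delta+o(n)}$). The bottleneck–ratio bound then yields $t_{\mathrm{mix}}(n)\ge \mu_n(S)\big/\big(4\,Q(S,S^c)\big)\ge e^{n\delta-o(n)}$, which is exponential in $n$.

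\textbf{Main obstacle.} The delicate step is the middle one: showing that at $\beta=\beta_{cr}$ the landscape genuinely changes, producing a nonzero well with $F<0$. The eigenvalue computation for $\mathrm{Hess}\,F(0)$ cleanly identifies the threshold, but converting ``the origin is a strict saddle'' into a symmetric pair of wells separated by a hyperplane with a \emph{strictly positive} barrier gap $\delta$ requires controlling the global–minimizer set of $F$ — its finiteness, or at least the existence of a suitable separating hypersurface — and this is precisely where the $\tanh$ / intermediate–value–theorem analysis of the mean-field equation carries the weight. Once the gap $\delta>0$ is secured, the LDP/variational estimate for $\mu_n$ and the Cheeger–type bound are routine.
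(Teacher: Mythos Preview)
Your proposal is correct and follows essentially the same route as the paper: set up the free energy, use the Perron eigenvector together with the mean-field equation to produce a nonzero critical point (hence a symmetric pair of wells) once $\beta>\beta_{cr}$, and then invoke the conductance/bottleneck argument of \cite{Curie-Potts1} and \cite[Chapter~7]{MCMC}. The paper's Section~\ref{sec:low} is in fact terser than your sketch---it argues via the intermediate value theorem on $\tilde F(\gamma)=F(\gamma\ba^{\top})$ rather than your monotone iteration, does not record the Hessian computation at the origin, and does not spell out the explicit hyperplane bottleneck set---so your version is, if anything, the more explicit of the two.
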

We remark that this result also coincides with \cite[Proposition 2.1]{recover1}, which characterized the ground states of the multi-component Ising model with two groups of equal size.

\section{Preliminaries}\label{sec:Pre}
In this section, we provide the prerequisite concepts and properties for our multi-component Ising model to achieve the main results. Before elaborating on the preliminaries, we explain some of the notations used in the paper as follows:
\renewcommand\labelitemi{$\vcenter{\hbox{\tiny$\bullet$}}$}
\begin{itemize}[leftmargin=*]
\item We write $f(x)=O(g(x))$ if there exists a positive real number $M$ and a real number $x_0$ such that $|f(x)|\leq Mg(x)$ for all $x\geq x_{0}$.
\item For any square matrix $\mathbf{A}$, $\mathbf{A}^t$ indicates the power of $t$, not the transpose of the matrix. We use $\mathbf{A}^\top$ for the transposed matrix.
\item The symbol $\circ$ means the Hadamard product, namely, the element-wise product. Precisely, $\mathbf{A}\circ\mathbf{B}=(A_{ij}B_{ij})_{i,j}$ for matrices $\mathbf{A}$ and $\mathbf{B}$ with the same dimension.
\item Given two $k-$dimensional vectors $\bm{a}$ and $\bm{b}$, we write $\bm{a}\leq \bm{b}$ when the inequality holds element-wisely, i.e., $a_i\leq b_i$ for all $1\leq i\leq k$.
\end{itemize}

Note that we consider the multi-component Ising model on $n$ sites, when the sites are partitioned into $m$ groups, or blocks, $G_1,G_2,\cdots,G_m$ with the proportion $p_1,p_2,\cdots,p_m$, i.e., $|G_i|=np_i$ for $i=1,2,\cdots,m$. The set of configurations $\Omega=\{-1,+1\}^V$ is the state space and $\mu_n$ is the Gibbs measure as given in \eqref{eq:Gibbs}. Additionally, $\E_\sigma$ and $\prob_\sigma$ indicate the expectation and probability measure of the Glauber dynamics whose starting state is $\sigma$.

\vspace{0.2cm}
In Subsection \ref{subsec:monotone coupling}, we describe a grand coupling known as the \emph{monotone coupling} and its contraction properties. We interpret such properties with the magnetization chains in Subsection \ref{subsec:magnetization chains}. Subsection \ref{subsec:variance bound} is devoted to analyzing variance bounds of the magnetization that are important to bound the order of mixing times. In Subsection \ref{subsec:after}, we prove that once the magnetizations agree, $O(n\log n)$ steps are sufficient to obtain full agreement at all temperatures, which will be primarily used at the critical temperature. Lastly, in Subsection \ref{subsec:superMG}, we state a vital lemma about supermartingales that is applied in deriving the upper bound of mixing times.

\subsection{Monotone coupling}\label{subsec:monotone coupling}
We illustrate the $\emph{monotone coupling}$ to construct Markov chains evolving together. Let $I$ be a uniform random variable over the sites $V=\{1,2,\cdots,n\}$, and generate a uniform random variable $U$ on $[0,1]$ independent of $I$. Suppose $I\in G_j$. For each $\sigma\in\Omega$, determine the spin $S^{\sigma}$ according to $I$ and $U$ by
\[
S^{\sigma}=
\begin{cases}
+1,& 0<U\leq r_+\left(\frac{\sum_{1\leq l\leq m}{k_{jl}M_l}-k_{jj}\sigma(I)}{n}\right),\\
-1,& r_+\left(\frac{\sum_{1\leq l\leq m}{k_{jl}M_l}-k_{jj}\sigma(I)}{n}\right)<U\leq 1,
\end{cases}
\]
and generate the next configuration as follows.
\[
\sigma_1(v)=
\begin{cases}
\sigma(v),& v\neq I,\\
S^{\sigma},& v=I.
\end{cases}
\]
For a given pair of configurations $\sigma, \sigma'$, we name the two-dimensional grand coupling $(\sigma_t,\sigma'_t)_{t\geq 0}$ $\emph{monotone coupling}$ with starting states $\sigma$ and $\sigma'$. We write $\sigma\leq \sigma'$ to designate the case of $\sigma(v)\leq \sigma'(v)$ for all sites $v\in G$.
\begin{remark}
If the starting states satisfy $\sigma\leq \sigma'$, then the monotone coupling maintains the monotonicity $\sigma_t\leq \sigma_t'$ for every $t\geq 0$ by means of the definition.
\end{remark}
We measure the distance between two configurations $\sigma$ and $\sigma'$ via \emph{Hamming distance}. The $i$th-$\emph{Hamming distance}$ equals to the number of sites where the two configurations disagree on the set $G_i$, i.e.,
\begin{align}\label{eq:distance}
\textnormal{dist}(\sigma^{(i)},\sigma'^{(i)}):=\frac{1}{2}\sum_{v\in G_i}|\sigma(v)-\sigma'(v)|.
\end{align}
The monotone coupling shows a contraction of the mean Hamming distance as follows.
\begin{lemma}\label{L_contr}
For the monotone coupling $(\sigma_t,\sigma'_t)$ starting at $(\sigma_0,\sigma_0')$, we have
\[
\begin{pmatrix}
\mathbb{E}\textnormal{dist}(\sigma_t^{(1)},\sigma_t'^{(1)}) \\
\mathbb{E}\textnormal{dist}(\sigma_t^{(2)},\sigma_t'^{(2)}) \\
\vdots \\
\mathbb{E}\textnormal{dist}(\sigma_t^{(m)},\sigma_t'^{(m)}) \\
\end{pmatrix}
\leq \mathbf{A}^t
\begin{pmatrix}
\textnormal{dist}(\sigma_0^{(1)},\sigma_0'^{(1)}) \\
\textnormal{dist}(\sigma_0^{(2)},\sigma_0'^{(2)}) \\
\vdots \\
\textnormal{dist}(\sigma_0^{(m)},\sigma_0'^{(m)}) \\
\end{pmatrix},
\]
where the matrix $\mathbf{A}$ is defined by
\[
\mathbf{A}= \begin{pmatrix}
1-\frac{1}{n}+\beta K_{11}(p_1-\frac{1}{n})  & \beta p_1 K_{12} & \cdots & \beta p_1 K_{1m}\\
\beta p_2 K_{21} & 1-\frac{1}{n}+\beta K_{22}(p_2-\frac{1}{n})  & \cdots & \beta p_2 K_{2m}\\
\vdots  & \cdots& \cdots & \vdots\\
\beta p_m K_{m1} & \beta p_m K_{m2} & \cdots & 1-\frac{1}{n}+\beta K_{mm}(p_m-\frac{1}{n})
\end{pmatrix}.
\]
\end{lemma}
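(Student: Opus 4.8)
The plan is to prove the one-step version of the inequality and then iterate. Since the monotone coupling is a time-homogeneous Markov chain on $\Omega\times\Omega$, it suffices to show that whenever $\sigma_0\leq\sigma_0'$ one has, writing $\mathbf{d}_t:=\bigl(\mathrm{dist}(\sigma_t^{(i)},\sigma_t'^{(i)})\bigr)_{i=1}^{m}$,
\begin{equation*}
\mathbb{E}\bigl[\mathbf{d}_1\,\big|\,\sigma_0,\sigma_0'\bigr]\ \leq\ \mathbf{A}\,\mathbf{d}_0\qquad(\text{entrywise}).
\end{equation*}
Granting this, every entry of $\mathbf{A}$ is nonnegative --- the off-diagonal entries $\beta p_iK_{ij}$ plainly so, and each diagonal entry is at least $1-\tfrac1n\geq0$ because $np_i=|G_i|\geq1$ forces $p_i\geq\tfrac1n$ --- so $\mathbf{A}$ preserves the entrywise order; combining this with the Markov property and the Remark's observation that $\sigma_t\leq\sigma_t'$ persists for all $t$, a routine induction yields $\mathbb{E}[\mathbf{d}_t]\leq\mathbf{A}\,\mathbb{E}[\mathbf{d}_{t-1}]\leq\cdots\leq\mathbf{A}^{t}\mathbf{d}_0$. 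A general pair $(\sigma_0,\sigma_0')$, not necessarily ordered, is reduced to this case by running the four chains started from $\sigma_0\wedge\sigma_0'$, $\sigma_0$, $\sigma_0'$, $\sigma_0\vee\sigma_0'$ with the same sequence of updates $(I,U)$: the coupling preserves the pointwise order, so the extreme chains sandwich the inner two at every time $t$, whence $\mathrm{dist}(\sigma_t^{(i)},\sigma_t'^{(i)})\leq\mathrm{dist}\bigl((\sigma_0\wedge\sigma_0')_t^{(i)},(\sigma_0\vee\sigma_0')_t^{(i)}\bigr)$ for each $i$, while the initial per-block distances of $(\sigma_0,\sigma_0')$ and $(\sigma_0\wedge\sigma_0',\sigma_0\vee\sigma_0')$ coincide.

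For the one-step estimate, fix $\sigma_0\leq\sigma_0'$, set $d_l:=\mathrm{dist}(\sigma_0^{(l)},\sigma_0'^{(l)})$, and condition on the updated site $I$ and on $U$. The $i$-th coordinate of $\mathbf{d}$ can change only when $I\in G_i$, in which case, in the monotone coupling, the new spins at $I$ coincide unless $U$ lies between the two acceptance thresholds $r_+(h_{\sigma_0}(I))$ and $r_+(h_{\sigma_0'}(I))$, where for $I\in G_i$ we write $h_\sigma(I):=\tfrac1n\bigl(\sum_{l}k_{il}M^{(l)}(\sigma)-k_{ii}\sigma(I)\bigr)$ for the mean field with the self-interaction removed. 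Hence, conditionally on $I$, the probability that $\sigma_1(I)\neq\sigma_1'(I)$ equals $\delta_I:=\bigl|r_+(h_{\sigma_0}(I))-r_+(h_{\sigma_0'}(I))\bigr|$. Since $r_+(s)=\tfrac12(1+\tanh\beta s)$ has derivative $\tfrac{\beta}{2}\sech^2(\beta s)\leq\tfrac{\beta}{2}$, the mean value theorem gives $\delta_I\leq\tfrac{\beta}{2}|h_{\sigma_0}(I)-h_{\sigma_0'}(I)|$; and since $\sigma_0\leq\sigma_0'$ forces $M^{(l)}(\sigma_0)-M^{(l)}(\sigma_0')=-2d_l$ exactly, one computes $h_{\sigma_0}(I)-h_{\sigma_0'}(I)=-\tfrac2n\sum_l k_{il}d_l$ when $I$ is an agreement site and $h_{\sigma_0}(I)-h_{\sigma_0'}(I)=\tfrac2n\bigl(k_{ii}-\sum_l k_{il}d_l\bigr)$ when $I$ is a disagreement site. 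In the latter case $d_i\geq1$, so $\sum_l k_{il}d_l\geq k_{ii}d_i\geq k_{ii}$, and therefore
\begin{equation*}
\delta_I\ \leq\ \tfrac{\beta}{n}\textstyle\sum_{l}k_{il}d_l\quad(\text{agreement site}),\qquad\qquad
\delta_I\ \leq\ \tfrac{\beta}{n}\Bigl(\textstyle\sum_{l}k_{il}d_l-k_{ii}\Bigr)\quad(\text{disagreement site}).
\end{equation*}

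Now average over the uniform choice of $I$ among the $n$ sites: the $i$-th distance decreases by one whenever $I$ is one of the $d_i$ disagreement sites of $G_i$ and the coupling resolves it (probability $1-\delta_I$), and increases by one whenever $I$ is one of the $np_i-d_i$ agreement sites of $G_i$ and the coupling splits it (probability $\delta_I$). Plugging in the two displayed bounds gives
\begin{equation*}
\mathbb{E}\bigl[\mathrm{dist}(\sigma_1^{(i)},\sigma_1'^{(i)})\,\big|\,\sigma_0,\sigma_0'\bigr]\ \leq\ d_i+\frac1n\Bigl[-d_i\Bigl(1-\tfrac{\beta}{n}\bigl(\textstyle\sum_l k_{il}d_l-k_{ii}\bigr)\Bigr)+(np_i-d_i)\,\tfrac{\beta}{n}\textstyle\sum_l k_{il}d_l\Bigr],
\end{equation*}
in which the two terms of order $n^{-2}d_i\sum_l k_{il}d_l$ cancel; using $K_{il}=k_{il}/n$, the right-hand side collapses to exactly $\bigl(1-\tfrac1n+\beta K_{ii}(p_i-\tfrac1n)\bigr)d_i+\sum_{l\neq i}\beta p_iK_{il}\,d_l$, which is the $i$-th entry of $\mathbf{A}\mathbf{d}_0$. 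This establishes the one-step estimate and hence the lemma.

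The step I expect to be the crux is the disagreement-site bound on $\delta_I$: the crude triangle inequality only yields $\delta_I\leq\tfrac{\beta}{n}(\sum_l k_{il}d_l+k_{ii})$, which is too lossy --- it is the sharper constant $\sum_l k_{il}d_l-k_{ii}$, obtained by keeping the sign of the self-interaction term $k_{ii}\sigma(I)$ deleted from the mean field together with the fact $d_i\geq1$ on a disagreement site, that produces the precise diagonal $1-\tfrac1n+\beta K_{ii}(p_i-\tfrac1n)$ of $\mathbf{A}$ rather than a version with $p_i+\tfrac1n$. Everything else (the $\wedge/\vee$ reduction to ordered pairs, the Lipschitz bound on $r_+$, and the iteration over $t$) is routine.
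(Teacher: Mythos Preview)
Your proof is correct, but it takes a genuinely different route from the paper's. The paper uses the classical \emph{path coupling} reduction: it first proves the one-step bound in the special case $\textnormal{dist}(\sigma_0,\sigma_0')=1$ (a single disagreement at some vertex $v$), bounding the splitting probability via the inequality $\tfrac12\bigl(\tanh\beta(x+2K_{ik})-\tanh\beta x\bigr)\le\tanh(\beta K_{ik})\le\beta K_{ik}$, and then passes to general pairs by inserting a chain $\sigma_0=\sigma^0,\sigma^1,\dots,\sigma^k=\sigma_0'$ of configurations at Hamming distance one and applying the triangle inequality along this path in the grand coupling. You instead work directly with ordered pairs at \emph{arbitrary} distance, and the crucial step that makes this succeed is exactly the one you flag: at a disagreement site the removed self-interaction term $k_{ii}\sigma(I)$ has the helpful sign, so the gap in mean fields is $\tfrac{2}{n}\bigl(\sum_l k_{il}d_l-k_{ii}\bigr)$ rather than $\tfrac{2}{n}\bigl(\sum_l k_{il}d_l+k_{ii}\bigr)$; combined with $d_i\ge1$ this produces the precise diagonal entry of $\mathbf A$. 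Your $\wedge/\vee$ sandwich then handles unordered pairs. What the path coupling buys is that the distance-$1$ analysis is simpler (there the mean fields at the disagreement site coincide exactly, so $I=v$ resolves deterministically); what your approach buys is that no intermediate configurations are needed and the role of the self-interaction correction is made explicit. Both arrive at the identical matrix bound.
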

With simpler notation, we can write
\begin{align*}
(\mathbf{A}_{ij})=\left.
  \begin{cases}
    1-\frac{1}{n}+\beta K_{ii}\left(p_i-\frac{1}{n}\right), & \text{for } i=j \\
    \beta p_i K_{ij}, & \text{for } i \neq j
  \end{cases}
  \right\},
\end{align*}
and therefore
\[
\mathbf{A}=(1-\frac{1}{n})\mathbf{I}_n + \frac{\beta}{n}\mathbf{B}-\frac{\beta}{n^2}\mathbf{C},
\]
where $(\mathbf{B})_{ij}=p_i k_{ij}$ and $\mathbf{C}:=\text{diag}\left(k_{11},\cdots,k_{mm}\right)$. Since $\mathbf{A}\leq (1-\frac{1}{n})\mathbf{I}_n + \frac{\beta}{n}\mathbf{B}$, we will often employ 
\begin{align}\label{eq:def_Q}
    \mathbf{Q}:=(1-\frac{1}{n})\mathbf{I}_n + \frac{\beta}{n}\mathbf{B}
\end{align}
instead of $\mathbf{A}$ in the aforementioned equation.
\begin{proof}
We first show that the Lemma holds when $\textnormal{dist}(\sigma, {\sigma}')=1$ and at time $t=1$. We may assume $\sigma\leq\sigma'$ with $\sigma(v)=-1$, $\sigma'(v)=1$ for some vertex $v$, and $\sigma(w)=\sigma'(w)$ elsewhere $w\neq v$. Then, for $i=1,2,\cdots,m$,
\[
\textnormal{dist}({\sigma_1}^{(i)}, {\sigma_1'}^{(i)}) \leq 
\mathbbm{1}_{v\in A_i}(1-\mathbbm{1}_{I=v}+\mathbbm{1}_{I\neq v}\mathbbm{1}_{I\in A_i}\mathbbm{1}_{B_i})
+\sum_{j\neq i} \mathbbm{1}_{v\in A_j} \mathbbm{1}_{I\in A_i}\mathbbm{1}_{B_j},
\]
where
\[
B_i = \left\{ r_+\left(\sum_{w:w\sim I} K(I,w)\sigma(w)\right)\leq U < r_+\left(\sum_{w:w\sim I} K(I,w)\sigma'(w)\right) \right\}.
\]
Note that for $k=1,2,\cdots,m$,
\begin{align*}
\mathbb{P}\big(B_k\big |\big\{I\in A_i & \big\}\big)
= \frac{1}{2} \left(\tanh{\beta\left(\sum_{w:w\sim I} K(I,w)\sigma'(w)\right)} - \tanh{\beta\left(\sum_{w:w\sim I} K(I,w)\sigma(w)\right)}\right)\\
&= \frac{1}{2} \left(\tanh{\beta\left(\sum_{w:w\sim I} K(I,w)\sigma(w)+2K_{ik}\right)} - \tanh{\beta\left(\sum_{w:w\sim I} K(I,w)\sigma(w)\right)}\right)\\
&\leq \tanh{(\beta K_{ik})}\leq \beta K_{ik}.
\end{align*}
Thus,
\[
\mathbb{E}\textnormal{dist}({\sigma_1}^{(i)},{\sigma'_1}^{(i)}) \leq \mathbbm{1}_{v\in A_i} \left(1 - \frac{1}{n} + (p_i - \frac{1}{n}) \beta K_{ii}\right) + \sum_{j\neq i} \mathbbm{1}_{v\in A_j} p_i \beta K_{ij}.
\]
Now consider $\sigma,\sigma'$ with $\textnormal{dist}(\sigma,\sigma')=k$ for some $k>1$. There exist a sequence of configurations $\sigma^0:=\sigma, \sigma^1, \cdots, \sigma^k :=\sigma'$ such that $\textnormal{dist}(\sigma^i,\sigma^{i+1})$=1. By the triangular inequality, we obtain
\[
\mathbb{E}\textnormal{dist}(\sigma_1^{(i)},\sigma_1'^{(i)})\leq \textnormal{dist}(\sigma^{(i)},\sigma'^{(i)})\left(1-\frac{1}{n}+(p_i-\frac{1}{n})\beta K_{ii}\right)+\sum_{j\neq i}\textnormal{dist}(\sigma^{(j)},\sigma'^{(j)})p_i\beta K_{ij}.
\]
This establishes Lemma \ref{L_contr} for $t=1$. Iterating completes the proof.
\end{proof}

Recall from $\eqref{eq:def_Q}$ that $\mathbf{Q}$ is a positive matrix. Thus, by the Perron--Frobenius theorem, there exists the largest eigenvalue $\rho_n>0$ with the left eigenvector $\ba^{\top}:=(a_1,a_2,\cdots,a_m)>\mathbf{0}$ with $\Vert \ba\Vert_1=1$. Moreover, the left eigenvector of the largest eigenvector of $\mathbf{B}$ is also $\ba^{\top}$. Therefore, $\ba^{\top}$ only depends on $p_i$s and $k_{ij}$s. Now we define
\begin{align}\label{const}
\alpha:=\frac{1}{2n(1-\rho_n)},\quad \beta_{cr}:=\frac{1}{\sum_{1\leq i,j\leq m}{a_i p_i k_{ij}}}.
\end{align}

We prove that $\alpha$ and $\beta_{cr}$ are independent of $n$.
\begin{proposition}\label{prop:eigen}
The critical inverse temperature $\beta_{cr}$ is determined by the proportion of the groups and their interaction strengths, i.e., $\beta_{cr}=\beta_{cr}(p_i, k_{ij})$, and $\alpha$ is a constant that depends on $\beta$ but independent of $n$, i.e., $\alpha=\alpha(p_i,k_{ij},\beta)$. Moreover, $\beta < \beta_{cr}$ is equivalent to $\rho_n<1$.
\end{proposition}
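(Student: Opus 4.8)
The plan is to prove Proposition~\ref{prop:eigen} by relating the matrices $\mathbf{Q}$ (which depends on $n$) to the $n$-independent matrix $\mathbf{B}=\mathbf{D}^2\mathbf{K}$ through the explicit formula $\mathbf{Q}=(1-\tfrac1n)\mathbf{I}_n+\tfrac{\beta}{n}\mathbf{B}$. The first step is to observe that this identity shows $\mathbf{Q}$ and $\mathbf{B}$ share the same eigenvectors: if $\mathbf{B}\bm{v}=\theta\bm{v}$, then $\mathbf{Q}\bm{v}=\bigl((1-\tfrac1n)+\tfrac{\beta}{n}\theta\bigr)\bm{v}$, and the same holds for left eigenvectors. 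In particular, the Perron--Frobenius left eigenvector $\ba^\top$ of $\mathbf{Q}$ (positive, $\|\ba\|_1=1$) is exactly the Perron--Frobenius left eigenvector of $\mathbf{B}$, which manifestly depends only on the $p_i$ and $k_{ij}$; this immediately gives $\beta_{cr}=\beta_{cr}(p_i,k_{ij})$ since the denominator $\sum_{i,j} a_i p_i k_{ij}=\ba^\top\mathbf{B}\bone$ is built only from $\ba$, $\mathbf{P}$, and $\mathbf{K}$.

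Next I would handle $\rho_n$ and $\alpha$. Let $\theta_{\max}>0$ denote the largest (Perron) eigenvalue of $\mathbf{B}$, which is $n$-independent. From the eigenvalue correspondence above, the largest eigenvalue of $\mathbf{Q}$ is $\rho_n=(1-\tfrac1n)+\tfrac{\beta}{n}\theta_{\max}=1-\tfrac{1}{n}(1-\beta\theta_{\max})$. Substituting into \eqref{const},
\[
\alpha=\frac{1}{2n(1-\rho_n)}=\frac{1}{2n\cdot\frac1n(1-\beta\theta_{\max})}=\frac{1}{2(1-\beta\theta_{\max})},
\]
which is visibly independent of $n$ and depends only on $\beta$ and $\theta_{\max}=\theta_{\max}(p_i,k_{ij})$, giving $\alpha=\alpha(p_i,k_{ij},\beta)$. (One should note for this to be a positive finite constant one needs $\beta\theta_{\max}<1$, which is precisely the high-temperature condition; in the regime where $\alpha$ is used this holds.) Finally, the equivalence $\beta<\beta_{cr}\iff\rho_n<1$ follows by chasing definitions: $\rho_n<1\iff 1-\tfrac1n(1-\beta\theta_{\max})<1\iff \beta\theta_{\max}<1\iff \beta<\theta_{\max}^{-1}$, so it remains to identify $\theta_{\max}^{-1}$ with $\beta_{cr}=\bigl(\sum_{i,j}a_ip_ik_{ij}\bigr)^{-1}$, i.e. to show $\theta_{\max}=\ba^\top\mathbf{B}\bone=\sum_{i,j}a_ip_ik_{ij}$.

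The main obstacle is this last identification, $\theta_{\max}=\sum_{i,j} a_i p_i k_{ij}$. The natural approach is to exploit the rank structure: since $\mathbf{B}=\mathbf{D}^2\mathbf{K}$ and each block of $\mathbf{J}_n$ is rank one, $\mathbf{B}$ and the symmetrized matrix $\mathbf{D}\mathbf{K}\mathbf{D}$ are related by the similarity $\mathbf{D}\mathbf{K}\mathbf{D}=\mathbf{D}^{-1}\mathbf{B}\mathbf{D}$, so $\mathbf{B}$ has the same spectrum as the symmetric positive matrix $\mathbf{D}\mathbf{K}\mathbf{D}$; hence $\theta_{\max}$ is real, positive, simple. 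To pin its value, I would use the Perron eigenvector relation directly: if $\ba^\top$ is the left Perron eigenvector of $\mathbf{B}$ with $\ba^\top\mathbf{B}=\theta_{\max}\ba^\top$ and $\|\ba\|_1=1$, then contracting against the all-ones vector $\bone$ gives $\ba^\top\mathbf{B}\bone=\theta_{\max}\ba^\top\bone=\theta_{\max}$, and the left-hand side is exactly $\sum_{i,j} a_i(\mathbf{B})_{ij}=\sum_{i,j}a_i p_i k_{ij}$. This works cleanly \emph{provided} $\bone$ is the corresponding right Perron eigenvector direction—or more precisely, provided $\ba^\top\mathbf{B}\bone$ indeed equals $\theta_{\max}$, which the computation just showed holds unconditionally from $\ba^\top\mathbf{B}=\theta_{\max}\ba^\top$. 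So the identity is in fact immediate from the left-eigenvector equation alone; the only care needed is to confirm via Perron--Frobenius that $\theta_{\max}$ really is the top eigenvalue (using positivity of $\mathbf{B}$, equivalently irreducibility, which holds since all $k_{ij}>0$) and that $\ba>\mathbf{0}$ so the normalization $\ba^\top\bone=1$ is legitimate. I would close by assembling these pieces into the three claimed conclusions.
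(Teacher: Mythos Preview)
Your proposal is correct and follows essentially the same route as the paper: both arguments hinge on the fact that $\mathbf{Q}=(1-\tfrac1n)\mathbf{I}+\tfrac{\beta}{n}\mathbf{B}$ shares its Perron left eigenvector $\ba^\top$ with $\mathbf{B}$, and then extract $\rho_n=1-\tfrac1n+\tfrac{\beta}{n}\sum_{i,j}a_ip_ik_{ij}$ by pairing the eigenvector relation with $\bone$ (the paper phrases this as computing $\|\mathbf{Q}^\top\ba\|_1$, which is exactly your $\ba^\top\mathbf{B}\bone$ step). Your detour through naming $\theta_{\max}$ and then identifying it with $\sum_{i,j}a_ip_ik_{ij}$ is a harmless repackaging of the same computation; the brief worry about whether $\bone$ is the right Perron eigenvector is, as you correctly noted, unnecessary since only the left-eigenvector identity and $\|\ba\|_1=1$ are used.
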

\begin{proof}
Considering that $\ba^{\top}$ is independent of $n$, $\beta_{cr}$ is independent of $n$ and is a function on $p_i$ $(1\leq i\leq m)$, and $k_{ij}$ $(1\leq i,j\leq m)$ directly follows from the definition. In addition,
\begin{align*}
    1-\frac{1}{2n\alpha}=\rho_n
    =\Vert \mathbf{Q}^{\top}\ba\Vert_1
    =\left\|\left(\left(1-\frac{1}{n}\right)\mathbf{I}_n+\frac{\beta}{n}\mathbf{B}^{\top}\right)\ba\right\|_1 = 1-\frac{1}{n}+\frac{\beta}{n}\sum_{1\leq i,j\leq m}a_ip_ik_{ij},
\end{align*}
to obtain
\[
\alpha=\frac{1}{2\left(1-\beta/\beta_{cr}\right)},
\]
which shows that $\alpha$ depends only on $\beta$, $p_i$s, and $k_{ij}$s. Finally, by the definition of $\beta_{cr}$, $\beta < \beta_{cr}$ is equivalent to $\alpha>0$ and $\rho_n<1$.
\end{proof}

We may denote the maximum and minimum element of $\ba$ by $a_{\text{max}}:=\max_{1\leq i\leq m}{a_i}$ and $a_{\text{min}}:=\min_{1\leq i\leq m}{a_i}$, that are independent of $n$. The following lemma is required to take advantage of the matrix part of Lemma \ref{L_contr}. We write $\bm{e}_j$ for $j$th unit vector and $\mathbbm{1}$ for all-ones vector on $\mathbb{R}^m$.

\begin{lemma}\label{lemma:Sym}
For $\mathbf{0}\leq \mathbf{s}$, we have
\begin{align*}
    \|\mathbf{Q}^t\bs\|_1\leq \rho_n ^t\left(\displaystyle\sum_{i=1}^m\frac{(s^{(i)})^2}{p_i}\right)^{1/2},\quad
    \bm{e}_j^{\top}\mathbf{Q}^t\bs\leq\sqrt{p_j}\rho_n ^t\left(\displaystyle\sum_{i=1}^m\frac{(s^{(i)})^2}{p_i}\right)^{1/2}.
\end{align*}
\end{lemma}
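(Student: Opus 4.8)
The plan is to diagonalize the symmetric conjugate of $\mathbf{Q}$ and exploit that its operator norm is exactly $\rho_n$. Observe that $\mathbf{B}=\mathbf{D}^2\mathbf{K}$, so the symmetrized matrix $\mathbf{D}^{-1}\mathbf{B}\,\mathbf{D} = \mathbf{D}\mathbf{K}\mathbf{D}$ is symmetric; consequently $\widetilde{\mathbf{Q}} := \mathbf{D}^{-1}\mathbf{Q}\,\mathbf{D} = (1-\tfrac1n)\mathbf{I}_m + \tfrac{\beta}{n}\mathbf{D}\mathbf{K}\mathbf{D}$ is symmetric and has the same spectrum as $\mathbf{Q}$. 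Its largest eigenvalue is $\rho_n$, and since $\widetilde{\mathbf{Q}}$ is symmetric and positive, its operator norm on $(\mathbb{R}^m,\|\cdot\|_2)$ equals $\rho_n$; the same holds for every power $\widetilde{\mathbf{Q}}^t$, whose norm is $\rho_n^t$. First I would write $\mathbf{Q}^t = \mathbf{D}\,\widetilde{\mathbf{Q}}^t\,\mathbf{D}^{-1}$.

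Next I would set $\bm{w} := \mathbf{D}^{-1}\bs$, i.e. $w^{(i)} = s^{(i)}/\sqrt{p_i}$, so that $\|\bm{w}\|_2^2 = \sum_{i=1}^m (s^{(i)})^2/p_i$, exactly the quantity appearing on the right-hand side. For the first inequality, I would pass from the $\ell^1$ norm to an $\ell^2$ bound: writing $\|\mathbf{Q}^t\bs\|_1 = \|\mathbf{D}\,\widetilde{\mathbf{Q}}^t\bm{w}\|_1 = \sum_{j=1}^m \sqrt{p_j}\,\bigl(\widetilde{\mathbf{Q}}^t\bm{w}\bigr)^{(j)}$, and noting all entries are non-negative, Cauchy--Schwarz gives $\sum_j \sqrt{p_j}\,|(\widetilde{\mathbf{Q}}^t\bm{w})^{(j)}| \le \bigl(\sum_j p_j\bigr)^{1/2}\|\widetilde{\mathbf{Q}}^t\bm{w}\|_2 = \|\widetilde{\mathbf{Q}}^t\bm{w}\|_2$, using $\sum_j p_j = 1$. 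Then $\|\widetilde{\mathbf{Q}}^t\bm{w}\|_2 \le \rho_n^t\|\bm{w}\|_2 = \rho_n^t\bigl(\sum_i (s^{(i)})^2/p_i\bigr)^{1/2}$, which is the claim. For the second inequality, $\bm{e}_j^{\top}\mathbf{Q}^t\bs = \sqrt{p_j}\,\bm{e}_j^{\top}\widetilde{\mathbf{Q}}^t\bm{w} \le \sqrt{p_j}\,\|\widetilde{\mathbf{Q}}^t\bm{w}\|_2 \le \sqrt{p_j}\,\rho_n^t\bigl(\sum_i (s^{(i)})^2/p_i\bigr)^{1/2}$ by Cauchy--Schwarz on the single coordinate and the operator-norm bound.

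The main obstacle — really the only subtle point — is justifying that the operator norm of the symmetric matrix $\widetilde{\mathbf{Q}}$ (and its powers) is its largest eigenvalue $\rho_n$; one must check that all eigenvalues are positive so that the spectral radius coincides with the $\ell^2$ operator norm. This follows because $\mathbf{Q} = (1-\tfrac1n)\mathbf{I}_m + \tfrac{\beta}{n}\mathbf{B}$ with $\mathbf{B}$ having non-negative entries, so every eigenvalue of $\widetilde{\mathbf{Q}}$ is at least $1-\tfrac1n > 0$ provided $n \ge 2$ (the eigenvalues of $\mathbf{D}\mathbf{K}\mathbf{D}$ need not be positive, but $\mathbf{K}$ positive definite — or more simply the $1-\tfrac1n$ shift dominating — handles the regime of interest; in the generality of this section one uses that $\mathbf{D}\mathbf{K}\mathbf{D}$ has non-negative largest eigenvalue and the shift keeps the smallest above zero for $n$ large). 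Once positivity of the spectrum is in hand, $\|\widetilde{\mathbf{Q}}^t\|_{2\to 2} = \rho_n^t$ is immediate from the spectral theorem, and the rest is the two Cauchy--Schwarz applications above.
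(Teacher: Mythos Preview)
Your proof is correct and essentially identical to the paper's: both conjugate $\mathbf{Q}$ by $\mathbf{D}$ to the symmetric matrix $\mathbf{X}=\widetilde{\mathbf{Q}}$, use that its operator norm is $\rho_n$, and then apply Cauchy--Schwarz (the paper phrases it via $\text{Tr}(\mathbf{X}^t\mathbf{D}^{-1}\bs\,\mathbbm{1}^\top\mathbf{D})\le\|\mathbf{X}^t\|_2\|\mathbf{D}^{-1}\bs\|_2\|\mathbf{D}\mathbbm{1}\|_2$, which is the same inequality). Your final worry about positivity of all eigenvalues is unnecessary: $\rho_n$ is defined via Perron--Frobenius for the positive matrix $\mathbf{Q}$ and is therefore already the spectral radius, so $\|\widetilde{\mathbf{Q}}\|_2=\rho_n$ holds automatically for the symmetric conjugate without any sign condition on the remaining eigenvalues.
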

\begin{proof}
By the definition of the matrices, we get
\[
\mathbf{D}^{-1}\mathbf{Q}\mathbf{D}=
\left(1-\frac{1}{n}\right) \mathbf{I}_n +\frac{\beta}{n}\left((\sqrt{p_ip_j}k_{ij})_{ij}\right)=:\mathbf{X},
\]
which means that the matrix $\mathbf{Q}$ is similar to the symmetric matrix $\mathbf{X}$. Since $\mathbf{X}$ is symmetric, $\Vert\mathbf{X}\Vert_2=\rho_n $. Therefore, we obtain
\begin{align*}
     \|\mathbf{Q}^t\bs\|_1&=\mathbbm{1}^{\top}\mathbf{Q}^t\bs=\mathbbm{1}^{\top}\mathbf{D}\cdot\mathbf{X}^t\mathbf{D}^{-1}\bs=\text{Tr}(\mathbf{X}^t\mathbf{D}^{-1}\bs\mathbbm{1}^{\top}\mathbf{D})\\
     &\leq \|\mathbf{X}^t\mathbf{D}^{-1}\bs\mathbbm{1}^{\top}\mathbf{D}\|_2\leq \|\mathbf{X}\|_2^t\|\mathbf{D}^{-1}\bs\|_2\|\mathbf{D}\mathbbm{1}\|_2=\rho_n ^t\left(\displaystyle\sum_{i=1}^m\frac{(s^{(i)})^2}{p_i}\right)^{1/2},
\end{align*}
where $\text{Tr}(\cdot )$ is the trace of matrix. In a similar manner,
\begin{align*}
    \bm{e}_j^{\top}\mathbf{Q}^t\bs &=\bm{e}_j^{\top}\mathbf{D}\cdot\mathbf{X}^t\mathbf{D}^{-1}\bs
    =\text{Tr}(\mathbf{X}^t\mathbf{D}^{-1}\bs\bm{e}_j^{\top}\mathbf{D})\leq\Vert\mathbf{X}^t\mathbf{D}^{-1}\bs\bm{e}_j^{\top}\mathbf{D}\Vert_2\\
    &\leq\Vert\mathbf{X}\Vert_2^t\Vert\mathbf{D}^{-1}\bs\bm{e}_j^{\top}\mathbf{D}\Vert_2
    \leq \rho_n ^t\Vert\mathbf{D}^{-1}\bs\Vert_2\Vert\mathbf{D}\bm{e}_j\Vert_2
    =\sqrt{p_j}\rho_n ^t\left(\displaystyle\sum_{i=1}^m\frac{(s^{(i)})^2}{p_i}\right)^{1/2}.
\end{align*}
\end{proof}

Combining Lemma \ref{L_contr} and Lemma \ref{lemma:Sym}, we bound the expectation of distance with the initial value without the matrix form.
\begin{lemma}\label{DistLem}
For a monotone coupling $(\sigma_t,\sigma_t')_{t\geq 0}$ starting at $(\sigma,\sigma')$, we have
\[
\E\sum_{i=1}^m a_i\textnormal{dist}(\sigma_t^{(i)},\sigma_t'^{(i)})\leq \rho_n ^t \sum_{i=1}^m a_i \textnormal{dist}(\sigma^{(i)},\sigma'^{(i)}).
\]
In addition, $\E\textnormal{dist}(\sigma_t^{(i)},\sigma_t'^{(i)})\leq n\sqrt{p_i}\rho_n ^t$.
\end{lemma}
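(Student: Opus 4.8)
The plan is to combine the matrix contraction of Lemma~\ref{L_contr} with the Perron--Frobenius structure of $\mathbf{Q}$ recorded right after \eqref{eq:def_Q}. Write $\bs_t$ for the random vector whose $i$th coordinate is $\textnormal{dist}(\sigma_t^{(i)},\sigma_t'^{(i)})$, and $\bs_0$ for its (deterministic) initial value. Since every entry of $\bs_0$ is nonnegative and, by \eqref{eq:def_Q}, $\mathbf{0}\leq\mathbf{A}\leq\mathbf{Q}$ entrywise with both matrices having nonnegative entries, the inequality survives taking powers and acting on $\bs_0$, so $\mathbf{A}^t\bs_0\leq\mathbf{Q}^t\bs_0$ coordinatewise. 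Hence Lemma~\ref{L_contr} gives $\E\bs_t\leq\mathbf{Q}^t\bs_0$, and everything reduces to estimating $\mathbf{Q}^t\bs_0$ in two different ways.

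For the first assertion, I would pair $\E\bs_t\leq\mathbf{Q}^t\bs_0$ with the nonnegative left Perron eigenvector $\ba$ of $\mathbf{Q}$. Since $\ba^\top\mathbf{Q}=\rho_n\ba^\top$ (equivalently $\mathbf{Q}^\top\ba=\rho_n\ba$, as used in the proof of Proposition~\ref{prop:eigen}), we have $\ba^\top\mathbf{Q}^t=\rho_n^t\ba^\top$, so
\[
\sum_{i=1}^m a_i\,\E\textnormal{dist}(\sigma_t^{(i)},\sigma_t'^{(i)})=\ba^\top\E\bs_t\leq\ba^\top\mathbf{Q}^t\bs_0=\rho_n^t\sum_{i=1}^m a_i\,\textnormal{dist}(\sigma^{(i)},\sigma'^{(i)}),
\]
which is exactly the claimed bound.

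For the second assertion, I would instead project $\E\bs_t\leq\mathbf{Q}^t\bs_0$ onto the $i$th coordinate and apply the second inequality of Lemma~\ref{lemma:Sym} with $\bs=\bs_0$, obtaining $\E\textnormal{dist}(\sigma_t^{(i)},\sigma_t'^{(i)})\leq\bm{e}_i^\top\mathbf{Q}^t\bs_0\leq\sqrt{p_i}\,\rho_n^t\big(\sum_{j=1}^m (s_0^{(j)})^2/p_j\big)^{1/2}$. Finally I would bound the initial term by the worst case: each block distance satisfies $s_0^{(j)}=\textnormal{dist}(\sigma^{(j)},\sigma'^{(j)})\leq|G_j|=np_j$, hence $\sum_{j=1}^m (s_0^{(j)})^2/p_j\leq\sum_{j=1}^m n^2 p_j=n^2$ by $\sum_j p_j=1$, giving $\E\textnormal{dist}(\sigma_t^{(i)},\sigma_t'^{(i)})\leq n\sqrt{p_i}\,\rho_n^t$.

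There is essentially no substantive obstacle here; the statement is a packaging of the two previous lemmas. The only points that require care are that the entrywise bound $\mathbf{A}\leq\mathbf{Q}$ is preserved under iteration and application to a nonnegative vector (true since all relevant entries are nonnegative), and that the vector $\ba$ appearing in the statement is precisely the left Perron eigenvector of $\mathbf{Q}$, which is exactly how $\ba$ was introduced. The remaining estimates — the block-size bound $s_0^{(j)}\leq np_j$ and the normalization $\sum_j p_j=1$ — are immediate.
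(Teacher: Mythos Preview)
Your proof is correct and follows essentially the same approach as the paper: pair the vector inequality from Lemma~\ref{L_contr} (upgraded from $\mathbf{A}$ to $\mathbf{Q}$ via the entrywise bound) with the left Perron eigenvector $\ba$ for the first assertion, and with $\bm{e}_i$ together with Lemma~\ref{lemma:Sym} and the trivial bound $\textnormal{dist}(\sigma^{(j)},\sigma'^{(j)})\leq np_j$ for the second. You are slightly more explicit than the paper about why $\mathbf{A}^t\bs_0\leq\mathbf{Q}^t\bs_0$, which is fine.
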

\begin{proof}
The first equation is directly derived from multiplying $\ba^{\top}$ on both sides of Lemma \ref{L_contr}. Since $\textnormal{dist}(\sigma_t^{(i)},\sigma_t'^{(i)})\leq np_i$ for all $i$, by multiplying $\bm{e}_i^{\top}$ on both sides of Lemma \ref{L_contr}, we get
\[
\E\textnormal{dist}(\sigma_t^{(i)},\sigma_t'^{(i)})\leq n\bm{e}_i^{\top}\mathbf{Q}^t(p_1,p_2,\cdots,p_m)^{\top}\leq n\sqrt{p_i}\rho_n ^t.
\]
The last inequality comes from Lemma $\ref{lemma:Sym}$.
\end{proof}

\subsection{Magnetization chain}\label{subsec:magnetization chains}
Recall the definition of the magnetization $\bS(\sigma)$ from \eqref{eq:def_mag}. We use the notation $\bS_t=\bS(\sigma_t)$ and $\bm{M}_t=\bm{M}(\sigma_t)$ throughout this paper. We first show that the magnetization chain is a Markov chain. Indeed, $\bS_{t+1}-\bS_t$ takes values in $\cup_{i=1}^m\{-\frac{2}{n}\bm{e}_i, +\frac{2}{n}\bm{e}_i, 0\}$, where $\bm{e}_i$ is the $i$th unit vector on $\mathbb{R}^m$.
\begin{proposition}\label{prop:Markov}
The process $\bS_t=(S_t^{(1)},\cdots,S_t^{(m)})$ is a Markov chain on the magnetization state space $\mathcal{S}$, where
\[
\mathcal{S}^{(i)}:=\{-p_i,-p_i+2/n,\cdots,p_i-2/n,p_i\},\quad \mathcal{S}:=\prod_{i=1}^m\mathcal{S}^{(i)}.
\]
\end{proposition}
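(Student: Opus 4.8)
The plan is to verify that the block-magnetization vector $\bS_t$ has the Markov property by showing its one-step transition probabilities depend on $\sigma_t$ only through $\bS_t$. First I would recall the Glauber update: at each step we pick a uniform site $I\in V$ and re-sample its spin. Conditioned on $I\in G_j$, the chain changes $\bS_t$ by $-\tfrac{2}{n}\bm e_j$ (if the chosen spin flips from $+1$ to $-1$), $+\tfrac{2}{n}\bm e_j$ (from $-1$ to $+1$), or $0$ (no flip), so indeed $\bS_{t+1}-\bS_t\in\bigcup_{i=1}^m\{-\tfrac2n\bm e_i,\tfrac2n\bm e_i,0\}$, and the target state lies in $\mathcal S$ since each coordinate changes by $2/n$ within $[-p_i,p_i]$.

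Next I would compute these probabilities explicitly. The event $\{I\in G_j\}$ has probability $|G_j|/n=p_j$, independent of the configuration. Given $I\in G_j$ and given the current configuration $\sigma$, the mean-field at $I$ is $S^I=\sum_{w\sim I}K(I,w)\sigma(w)=\tfrac1n\bigl(\sum_{l=1}^m k_{jl}M^{(l)}(\sigma)-k_{jj}\sigma(I)\bigr)$, and the new spin is $+1$ with probability $r_+(S^I)$. The key observations are: (i) $M^{(l)}(\sigma)=nS^{(l)}(\sigma)$ is a function of $\bS$; (ii) the correction term $k_{jj}\sigma(I)$ is $O(1/n)$ and, more importantly, within $G_j$ exactly $\tfrac{np_j+M^{(j)}}{2}$ sites carry spin $+1$ and $\tfrac{np_j-M^{(j)}}{2}$ carry spin $-1$, both counts being functions of $\bS$. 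Hence, conditioned on $I\in G_j$, the probability that the update \emph{increases} $S^{(j)}$ by $2/n$ equals $\tfrac{np_j-M^{(j)}}{2n}\cdot r_+\!\bigl(\tfrac1n(\sum_l k_{jl}M^{(l)}+k_{jj})\bigr)$ — choose one of the minus-spins in $G_j$ and flip it to plus — and the probability it \emph{decreases} $S^{(j)}$ by $2/n$ equals $\tfrac{np_j+M^{(j)}}{2n}\cdot r_-\!\bigl(\tfrac1n(\sum_l k_{jl}M^{(l)}-k_{jj})\bigr)$. Both expressions are functions of $\bS$ alone. Summing over $j=1,\dots,m$ and adding the complementary "no change" probability gives a transition kernel $P(\bs,\cdot)$ on $\mathcal S$ depending only on the current value $\bs=\bS_t$.

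Finally I would package this as the Markov property: for any $t$ and any configuration-history, $\prob(\bS_{t+1}=\bs'\mid\sigma_0,\dots,\sigma_t)$ equals the kernel $P(\bS_t,\bs')$ just computed, which depends on the past only through $\bS_t$; since $(\sigma_t)$ is itself Markov this yields $\prob(\bS_{t+1}=\bs'\mid\bS_0,\dots,\bS_t)=P(\bS_t,\bs')$. I do not anticipate a genuine obstacle here — the only mild subtlety is bookkeeping the $\pm k_{jj}/n$ self-interaction correction and confirming that the plus-count and minus-count in each block are recoverable from $\bS$ (they are, via $|G_j^{\pm}|=(np_j\pm M^{(j)})/2$), so that the conditional flip probability depends on $\sigma$ only through $\bS$. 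I would keep the computation at the level of identifying the functional dependence rather than writing out the full kernel, since the exact transition rates will be recorded and used later in Section~\ref{sec:Pre}.
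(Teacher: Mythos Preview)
Your proposal is correct and essentially identical to the paper's proof: both compute the explicit one-step transition probabilities of $\bS_t$ and observe that they depend on $\sigma_t$ only through the block spin-counts, hence only through $\bS_t$. One minor bookkeeping slip: the expression $\tfrac{np_j-M^{(j)}}{2n}\cdot r_+(\cdots)$ you write is already the \emph{unconditional} probability of the increase (it has absorbed the factor $p_j=|G_j|/n$), not the probability conditioned on $\{I\in G_j\}$; this coincides with the paper's formula $\tfrac{p_j-S_t^{(j)}}{2}\,r_+\!\bigl(X_t^{(j)}+k_{jj}/n\bigr)$ and does not affect the argument.
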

\begin{proof}
Clearly, each of $S_t^{(i)}$ can take a value in the set $\{-p_i,-p_i+2/n,\cdots,p_i-2/n,p_i\}$. Furthermore, it is easy to examine that
\begin{align}\label{eq:eqeq}
    \prob\left(\bS_{t+1}=\bS_t+\frac{2}{n}\bm{e}_i\right)&=p_i\cdot \frac{(|G_i|-nS_t^{(i)})/2}{|G_i|}\cdot r_+\left(\sum_{j\neq i}k_{ij}S_t^{(j)}+k_{ii}\left(S_t^{(i)}+\frac{1}{n}\right)\right).
\end{align}
Now, we define
\begin{align}\label{eq:def_X}
    X_t^{(i)}:=\sum_{j=1}^m k_{ij}S_t^{(j)}.
\end{align}
Then, we can rewrite the aforementioned equation \eqref{eq:eqeq} and obtain the rest of the equations in a similar way,
\begin{align*}
    &\prob\left(\bS_{t+1}=\bS_t\pm\frac{2}{n}\bm{e}_i\right)=\frac{p_i\mp S_t^{(i)}}{2}\cdot r_{\pm}\left(X_t^{(i)}\pm\frac{k_{ii}}{n}\right),\\
    &\prob\left(\bS_{t+1}=\bS_t\right)
    =\sum_{i=1}^{m}{\left(p_i-\frac{p_i-S_t^{(i)}}{2}\cdot r_+\left(X_t^{(i)}+\frac{k_{ii}}{n}\right)-\frac{p_i+S_t^{(i)}}{2}\cdot r_-\left(X_t^{(i)}-\frac{k_{ii}}{n}\right)\right)}.
\end{align*}
\end{proof}
\begin{remark}\label{Rmk:UniformlyBdd}
Observing that 
\[
\left|X_t^{(i)}\pm\frac{k_{ii}}{n}\right|\leq \sum_{j=1}^m k_{ij}p_j+ \frac{k_{ii}}{n}\leq \sum_{j=1}^m k_{ij}p_j+\max_{1\leq i\leq m}{k_{ii}}=:C_k,
\]
and $C_k$ is a constant independent of $n$, the derivative of $\tanh x$ is bounded away from both $0$ and $1$ for every possible $x$ in the state space. Thus, by the mean value theorem, we have
\begin{alignat*}{2}
    &\prob\Big(\bS_{t+1}&&=\bS_t\pm  \frac{2}{n}\bm{e}_i\Big)=\frac{p_i\mp S_t^{(i)}}{4}\cdot\left(1\pm\tanh(X_t^{(i)})\right)+O(n^{-1}),\\
    &\prob\Big(\bS_{t+1}&&=\bS_t \Big)\\ 
    & &&=\sum_{i=1}^{m}{\left(p_i-\frac{p_i-S_t^{(i)}}{4}\cdot\left(1+\tanh(X_t^{(i)})\right)-\frac{p_i+S_t^{(i)}}{4}\cdot \left(1-\tanh(X_t^{(i)})\right)\right)}+O(n^{-1})\\
    & && =\sum_{i=1}^{m}{\left(\frac{p_i}{2}+\frac{S_t^{(i)}}{2}\tanh(X_t^{(i)}))\right)}+O(n^{-1})=\frac{1}{2}+\sum_{i=1}^{m}{\left(\frac{S_t^{(i)}}{2}\tanh(X_t^{(i)}))\right)}+O(n^{-1}).
\end{alignat*}
Thus, we have that the remaining probability is uniformly bounded away from $0$ and $1$.
\end{remark}
Continuing the remark, we can also observe the drift of $S_t^{(i)}$. We define a $\sigma$-algebra $\mathcal{F}_t$ generated by $S_t^{(1)},S_t^{(2)},\cdots,S_t^{(m)}$. By direct calculation, we obtain
\begin{equation}\label{eq:drift}
\begin{split}
    \mathbb{E}\left[S_{t+1}^{(i)}-S_t^{(i)}|\mathcal{F}_t\right]
    &= \frac{2}{n}\cdot \frac{p_i -S_t^{(i)}}{2}\cdot r_+\left(X_t^{(i)}+K_{ii}\right)
    -\frac{2}{n}\cdot \frac{p_i +S_t^{(i)}}{2}\cdot r_-\left(X_t^{(i)}-K_{ii}\right)\\
    &=\frac{1}{n}\left( -S_t^{(i)} +f_n({\bS}_t)+g_n({\bS}_t) \right),
\end{split}
\end{equation}
where
\begin{align*}
    f_i({\bS}_t)&=p_i\left(r_+\left(X_t^{(i)}+K_{ii}\right)-r_-\left(X_t^{(i)}-K_{ii}\right)\right),\\
    g_i({\bS}_t)&=-S_t^{(i)}\left(r_+\left(X_t^{(i)}+K_{ii}\right)+r_-\left(X_t^{(i)}-K_{ii}\right)-1\right).
\end{align*}
Note that with the similar technique in Remark \ref{Rmk:UniformlyBdd},
\begin{equation}\label{control_r}
\begin{split}
    r_+\left(X_t^{(i)}+K_{ii}\right)-r_-\left(X_t^{(i)}-K_{ii}\right)
    &=\frac{1}{2}\left(\tanh{\beta(X_t^{(i)}+K_{ii})}+\tanh{\beta(X_t^{(i)}-K_{ii}})\right)\\
    &=\tanh{\beta X_t^{(i)}}+O(n^{-1}),\\
    r_+\left(X_t^{(i)}+K_{ii}\right)+r_-\left(X_t^{(i)}-K_{ii}\right)
    &=1-\frac{1}{2}\left(\tanh{\beta(X_t^{(i)}+K_{ii})}-\tanh{\beta(X_t^{(i)}-K_{ii}})\right)\\
    &=1+O(n^{-1}),
\end{split}
\end{equation}
to obtain the approximation
\begin{align*}
    \mathbb{E}\left[S_{t+1}^{(i)}-S_t^{(i)}|\mathcal{F}_t\right]
    \sim \frac{1}{n}\left( -S_t^{(i)} +p_i\tanh{(\beta X_t^{(i)}}) \right).
\end{align*}
We will scrutinize this drift later.

Now we demonstrate the contraction properties in terms of the magnetization chain.
\begin{lemma}\label{lemma:MagCont}
For starting magnetizations $\bs:=(s^{(1)},\cdots,s^{(m)})\geq \bs':=(s'^{(1)},\cdots,s'^{(m)})$, the magnetization satisfies
\[
0\leq
\begin{pmatrix}
\mathbb{E}_{\bs}S_t^{(1)}-\E_{\bs'}S_t'^{(1)}\\
\mathbb{E}_{\bs}S_t^{(2)}-\E_{\bs'}S_t'^{(2)}\\
\vdots \\
\mathbb{E}_{\bs}S_t^{(m)}-\E_{\bs'}S_t'^{(m)}\\
\end{pmatrix}
\leq \mathbf{Q}^t
\begin{pmatrix}
s^{(1)}-s'^{(1)}\\
s^{(2)}-s'^{(2)}\\
\vdots \\
s^{(m)}-s'^{(m)}\\
\end{pmatrix}.
\]
\end{lemma}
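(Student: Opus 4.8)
The plan is to realize the two magnetization chains inside the monotone coupling of Subsection~\ref{subsec:monotone coupling} and then read off the statement from the Hamming-distance contraction of Lemma~\ref{L_contr}. First, since $\bS_t$ is a Markov chain (Proposition~\ref{prop:Markov}), each $\E_{\bs}S_t^{(i)}$ depends only on the initial magnetization $\bs$ and not on the particular configuration carrying it; so I am free to choose configurations $\sigma,\sigma'$ with $\bm{M}(\sigma)=n\bs$, $\bm{M}(\sigma')=n\bs'$ and, crucially, $\sigma\geq\sigma'$. This is possible precisely because $\bs\geq\bs'$ forces, block by block, the number of $+1$ spins of $\sigma$ to be at least that of $\sigma'$, so within each $G_i$ one may arrange the $+1$'s of $\sigma'$ to sit among the $+1$'s of $\sigma$.

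Next I would run the monotone coupling $(\sigma_t,\sigma_t')_{t\geq0}$ from this ordered pair. By construction of the monotone coupling (the Remark in Subsection~\ref{subsec:monotone coupling}) the order is preserved: $\sigma_t\geq\sigma_t'$ for all $t$. This gives $M_t^{(i)}\geq M_t'^{(i)}$, hence the lower bound $0\leq\E_{\bs}S_t^{(i)}-\E_{\bs'}S_t'^{(i)}$. Moreover, when $\sigma_t\geq\sigma_t'$ every disagreement site $v\in G_i$ has $\sigma_t(v)=+1$, $\sigma_t'(v)=-1$, so $M_t^{(i)}-M_t'^{(i)}=2\,\textnormal{dist}(\sigma_t^{(i)},\sigma_t'^{(i)})$; in particular $\textnormal{dist}(\sigma^{(i)},\sigma'^{(i)})=\tfrac{n}{2}(s^{(i)}-s'^{(i)})$ for the initial data.

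Now I would invoke Lemma~\ref{L_contr}. Writing $\bm{d}_0$ for the vector with entries $\textnormal{dist}(\sigma^{(i)},\sigma'^{(i)})=\tfrac n2(s^{(i)}-s'^{(i)})$, Lemma~\ref{L_contr} gives $\E\,\textnormal{dist}(\sigma_t^{(i)},\sigma_t'^{(i)})\leq(\mathbf{A}^t\bm{d}_0)_i$, whence
\[
\E_{\bs}S_t^{(i)}-\E_{\bs'}S_t'^{(i)}=\tfrac1n\bigl(\E M_t^{(i)}-\E M_t'^{(i)}\bigr)=\tfrac2n\,\E\,\textnormal{dist}(\sigma_t^{(i)},\sigma_t'^{(i)})\leq\tfrac2n\bigl(\mathbf{A}^t\bm{d}_0\bigr)_i=\bigl(\mathbf{A}^t(\bs-\bs')\bigr)_i .
\]
Finally, since $\mathbf{0}\leq\mathbf{A}\leq\mathbf{Q}$ entrywise (immediate from the definitions, using $np_i=|G_i|\geq1$ for the diagonal entries) and $\bs-\bs'\geq\mathbf{0}$, a one-line induction on $t$ yields $\mathbf{A}^t(\bs-\bs')\leq\mathbf{Q}^t(\bs-\bs')$ entrywise, which gives the claimed upper bound.

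The argument is essentially bookkeeping once the monotone coupling is set up; the only points requiring a little care are the identity $M_t^{(i)}-M_t'^{(i)}=2\,\textnormal{dist}(\sigma_t^{(i)},\sigma_t'^{(i)})$ under the maintained order and the passage from $\mathbf{A}^t$ to $\mathbf{Q}^t$. I do not expect a genuine obstacle: the lemma is a direct corollary of Lemma~\ref{L_contr} together with the monotonicity of the coupling.
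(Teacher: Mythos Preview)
Your proposal is correct and mirrors the paper's proof essentially step for step: choose ordered configurations realizing $\bs\geq\bs'$, run the monotone coupling, convert magnetization differences to Hamming distances via the maintained order, apply Lemma~\ref{L_contr}, and then pass from $\mathbf{A}^t$ to $\mathbf{Q}^t$ using the entrywise inequality $\mathbf{A}\leq\mathbf{Q}$ (which the paper already recorded right after Lemma~\ref{L_contr}). The paper states the passage to $\mathbf{Q}^t$ implicitly while you spell it out, but otherwise the arguments are identical.
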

\begin{proof}
Let $(\sigma_t,{\sigma'}_t)$ be a monotone coupling starting from $(\sigma,\sigma')$ with $\sigma\geq\sigma'$, where the starting magnetizations satisfy $\bS(\sigma)=\bs$ and $\bS(\sigma')=\bs'$. By monotonicity, $\sigma_t^{(i)}\geq{\sigma'}_t^{(i)}$ holds for all $i$ and $t$. Therefore, $S_t^{(i)}-{S'}_t^{(i)}=|S_t^{(i)}-{S'}_t^{(i)}|=\frac{2}{n}\textnormal{dist}(\sigma_t^{(i)},{\sigma'}_t^{(i)})\geq 0$, and by Lemma \ref{L_contr},
\[
0\leq
\begin{pmatrix}
\mathbb{E}_{\sigma}S_t^{(1)}-\E_{\sigma'}S_t'^{(1)}\\
\mathbb{E}_{\sigma}S_t^{(2)}-\E_{\sigma'}S_t'^{(2)}\\
\vdots \\
\mathbb{E}_{\sigma}S_t^{(m)}-\E_{\sigma'}S_t'^{(m)}\\
\end{pmatrix}=
\begin{pmatrix}
\mathbb{E}_{\sigma,\sigma'}|S_t^{(1)}-S_t'^{(1)}|\\
\mathbb{E}_{\sigma,\sigma'}|S_t^{(2)}-S_t'^{(2)}|\\
\vdots \\
\mathbb{E}_{\sigma,\sigma'}|S_t^{(m)}-S_t'^{(m)}|\\
\end{pmatrix}
\leq \mathbf{Q}^t
\begin{pmatrix}
s^{(1)}-s'^{(1)}\\
s^{(2)}-s'^{(2)}\\
\vdots \\
s^{(m)}-s'^{(m)}\\
\end{pmatrix}.
\]
The conclusion follows from Proposition \ref{prop:Markov} that $\mathbb{E}_{\sigma}S_t^{(i)}-\E_{\sigma'}S_t'^{(i)}=\mathbb{E}_{\bs}S_t^{(i)}-\E_{\bs'}S_t'^{(i)}$.
\end{proof}
Recall that the notation $\circ$ indicates the Hadamard product, i.e., the element-wise product.
\begin{proposition}\label{VariProp}
For a monotone coupling $(\sigma_t,\sigma'_t)_{t\geq 0}$ starting at $(\sigma,\sigma')$ with magnetizations $(\bs,\bs')$,
\[
\E_{\sigma,\sigma'}\Vert \ba\circ{\bS}_t-\ba\circ\mathbf{S'}_t\Vert_1\leq \rho_n^t\Vert \ba\circ\mathbf{s}-\ba\circ\mathbf{s'}\Vert_1.
\]
\end{proposition}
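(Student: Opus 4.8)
The plan is to reduce the $\ell^1$ quantity $\Vert \ba\circ\bS_t-\ba\circ\bS_t'\Vert_1=\sum_{i=1}^m a_i\lvert S_t^{(i)}-{S}_t'^{(i)}\rvert$ to a \emph{signed} sum by exploiting that the monotone coupling is order-preserving, after which the bound becomes a one-line consequence of the contraction results already in hand. First I would reduce to the case $\bs\ge\bs'$: within each block $G_i$ the number of $+1$'s required to realize $s^{(i)}$ is at least that required to realize ${s'}^{(i)}$, so one may pick representative configurations $\sigma\ge\sigma'$ (nesting the $+1$-sites block by block) with $\bS(\sigma)=\bs$, $\bS(\sigma')=\bs'$; by the remark following the definition of the monotone coupling, $\sigma_t\le$-ordering is preserved, i.e.\ $\sigma_t\ge\sigma_t'$ for all $t\ge 0$.

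Given $\sigma_t\ge\sigma_t'$ we have $\sigma_t(v)-\sigma_t'(v)\ge 0$ for every $v$, so that $S_t^{(i)}-{S}_t'^{(i)}=\tfrac1n\sum_{v\in G_i}(\sigma_t(v)-\sigma_t'(v))\ge 0$ and moreover $S_t^{(i)}-{S}_t'^{(i)}=\tfrac2n\,\textnormal{dist}(\sigma_t^{(i)},{\sigma'}_t^{(i)})$. Hence the absolute values disappear:
\[
\Vert \ba\circ\bS_t-\ba\circ\bS_t'\Vert_1=\sum_{i=1}^m a_i\bigl(S_t^{(i)}-{S}_t'^{(i)}\bigr)=\frac2n\sum_{i=1}^m a_i\,\textnormal{dist}(\sigma_t^{(i)},{\sigma'}_t^{(i)}).
\]
Taking expectations and applying the first inequality of Lemma~\ref{DistLem}, $\E\sum_i a_i\,\textnormal{dist}(\sigma_t^{(i)},{\sigma'}_t^{(i)})\le\rho_n^{\,t}\sum_i a_i\,\textnormal{dist}(\sigma^{(i)},{\sigma'}^{(i)})$, and multiplying by $2/n$ turns the right-hand side into $\rho_n^{\,t}\sum_i a_i(s^{(i)}-{s'}^{(i)})=\rho_n^{\,t}\Vert\ba\circ\bs-\ba\circ\bs'\Vert_1$, which is the claim. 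Equivalently, one can bypass Lemma~\ref{DistLem}: by Lemma~\ref{lemma:MagCont}, $0\le \E_{\bs}\bS_t-\E_{\bs'}\bS_t'\le\mathbf{Q}^t(\bs-\bs')$ componentwise; multiplying on the left by the Perron--Frobenius left eigenvector $\ba^\top$ and using $\ba^\top\mathbf{Q}^t=\rho_n^{\,t}\ba^\top$ gives the same estimate, the coordinatewise nonnegativity being exactly what lets one identify $\ba^\top(\E_{\bs}\bS_t-\E_{\bs'}\bS_t')$ with $\E\Vert\ba\circ\bS_t-\ba\circ\bS_t'\Vert_1$.

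I do not expect a genuine obstacle here; the only point requiring care is the reduction to ordered starting configurations, i.e.\ recognizing that the $\ell^1$ norm collapses to a linear functional of $\bS_t-\bS_t'$ precisely because the monotone coupling keeps the two configurations ordered. Without that observation one would obtain only the weaker bound with $\tfrac2n\,\textnormal{dist}(\sigma^{(i)},{\sigma'}^{(i)})\ge\lvert s^{(i)}-{s'}^{(i)}\rvert$ on the right-hand side, so the monotonicity and the left-eigenvector identity $\ba^\top\mathbf{Q}=\rho_n\ba^\top$ are the two facts doing all the work.
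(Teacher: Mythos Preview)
Your argument is clean once the starting magnetizations are comparable, but the step you call a ``reduction to $\bs\ge\bs'$'' is not actually a reduction---you simply assume it. For general starting data one can easily have, say, $s^{(1)}>s'^{(1)}$ while $s^{(2)}<s'^{(2)}$, and then no pair of configurations $\sigma\ge\sigma'$ can realize both magnetizations; the monotone coupling preserves no global order, the absolute values do not drop out, and neither Lemma~\ref{DistLem} nor Lemma~\ref{lemma:MagCont} applies directly. A second, related issue: even when $\bs\ge\bs'$, the \emph{given} $\sigma,\sigma'$ need not satisfy $\sigma\ge\sigma'$, and replacing them by ``representative'' ordered configurations changes the joint law of $(\bS_t,\bS'_t)$ under the monotone coupling (this joint law depends on the site-by-site overlap of $\sigma,\sigma'$, not only on their block magnetizations), hence changes the left-hand side you are trying to bound.

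The paper's proof supplies exactly the missing piece: interpolate $\bs=\bs_0,\bs_1,\dots,\bs_m=\bs'$ with $\bs_{i-1}-\bs_i=(s^{(i)}-s'^{(i)})\bm e_i$, so that \emph{each consecutive pair} is comparable, run all of $\sigma_{0,t},\dots,\sigma_{m,t}$ in the grand monotone coupling, and apply the triangle inequality
\[
\Vert\ba\circ\bS_t-\ba\circ\bS'_t\Vert_1\ \le\ \sum_{i=1}^m\Vert\ba\circ\bS_{i-1,t}-\ba\circ\bS_{i,t}\Vert_1 .
\]
On each summand the configurations are ordered (your argument now applies verbatim), giving $\ba^\top\mathbf Q^t\bm e_i\,|s^{(i)}-s'^{(i)}|=\rho_n^{\,t}a_i|s^{(i)}-s'^{(i)}|$, and summing recovers $\rho_n^{\,t}\Vert\ba\circ\bs-\ba\circ\bs'\Vert_1$. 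In short, your ordered-case computation is correct and is precisely what gets used on each link of the chain; what is missing is the interpolation/triangle-inequality scaffold that reduces the general case to that one.
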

\begin{proof}
For any starting magnetizations $\bs$ and $\bs'$, there exists $\bs_0:=\bs$, $\bs_1,\cdots,\bs_{k-1}$, $\bs_k:=\bs'$ such that $\bs_{i-1}-\bs_{i}=\bm{e}_i(s^{(i)}-s'^{(i)})$ for $i=1,2,\cdots,k$. Then, we can consider monotone coupling $\sigma_{0,t}, \sigma_{1,t},\cdots,\sigma_{k,t}$ with the corresponding magnetization of the starting configuration as $\bs_0,\bs_1,\cdots,\bs_k$. Note that either $\bs_{i-1}\leq\bs_i$ or $\bs_{i-1}\geq\bs_i$ holds; therefore,  we can apply Lemma \ref{lemma:MagCont}.
Let $\bS_{i,t}$ denote the magnetization with respect to $\sigma_{i,t}$ for $i=0,1,\cdots,k$. Then, with the triangle inequality and Lemma \ref{lemma:MagCont},
\begin{align*}
    \E_{\sigma,\sigma'}\Vert\ba\circ\bS_t-\ba\circ  \bS_t' & \Vert_1
    \leq \sum_{i=1}^m\E_{\sigma_{i-1},\sigma_i}\Vert\ba\circ\bS_{i-1,t}-\ba\circ\bS_{i,t}\Vert_1\\
    &\leq\sum_{i=1}^{m}{\ba^{\top}\mathbf{Q}^t\bm{e}_i|s^{(i)}-s'^{(i)}|}
    =\rho_n ^t\sum_{i=1}^m a_i|s^{(i)}-s'^{(i)}|=\rho_n ^t\Vert\ba\circ\bs-\ba\circ\bs'\Vert_1.
\end{align*}
\end{proof}

\subsection{Variance Bound}\label{subsec:variance bound}
In this subsection, we analyze the variance bound of the magnetization. We start with the following lemma stated in \cite[Lemma 3.1.]{Hee}. 
\begin{lemma}\label{VariLem}
Let $(\bm{Z}_t)_{t\geq 0}$ be a Markov chain taking values in $\mathbb{R}^m$ with the expectation $\E_\bs$ when $\bm{Z}_0=\bs$. Suppose that there exist some $0<\kappa<1$ such that for any starting states $\bs,\bs'\in \mathbb{R}^m$,
\[
\Vert {\mathbb{E}_\bs}{\bm{Z}}_t - \mathbb{E}_{\bs'}{\bm{Z}}_t\Vert_1\leq \kappa^t\Vert \bs-\bs'\Vert_1.
\]
Then, for the $l^2$ norm variance $v_t:=\sup_{\bs_0}\mathrm{Var}_{\bs_0}\bm{Z}_t$,
\[
v_t \leq m v_1 \cdot \min\{t,(1-\kappa^2)^{-1}\}.
\]
\end{lemma}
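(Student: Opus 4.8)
The plan is to follow the classical one-step variance decomposition (as in the Curie--Weiss case), using the contraction hypothesis to control how errors propagate. First I would write, via the Markov property and the tower rule,
\[
\mathrm{Var}_{\bs_0}\bm{Z}_{t+1}
=\E_{\bs_0}\big[\mathrm{Var}_{\bm{Z}_t}\bm{Z}_{t+1}\big]
+\mathrm{Var}_{\bs_0}\big(\E_{\bm{Z}_t}\bm{Z}_{t+1}\big),
\]
where I interpret $\mathrm{Var}$ as the sum of coordinatewise variances (the $l^2$-norm variance $\E\|\bm{Z}-\E\bm{Z}\|_2^2$). The first term is at most $v_1$ by definition of $v_1=\sup_{\bs}\mathrm{Var}_{\bs}\bm{Z}_1$, since conditionally on $\bm{Z}_t$ the chain run one more step is just $\bm{Z}_1$ started from $\bm{Z}_t$. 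For the second term I would apply the contraction hypothesis with $t=1$: for any two states $\bs,\bs'$ one has $\|\E_{\bs}\bm{Z}_1-\E_{\bs'}\bm{Z}_1\|_1\le\kappa\|\bs-\bs'\|_1$, so the map $\bs\mapsto\E_{\bs}\bm{Z}_1$ is $\kappa$-Lipschitz in $\|\cdot\|_1$; combining this with the elementary norm comparison $\|\bm x\|_2\le\|\bm x\|_1\le\sqrt m\,\|\bm x\|_2$ on $\mathbb R^m$ gives
\[
\mathrm{Var}_{\bs_0}\big(\E_{\bm{Z}_t}\bm{Z}_{t+1}\big)
\le m\,\kappa^2\,\mathrm{Var}_{\bs_0}\bm{Z}_t .
\]
Hence $v_{t+1}\le v_1+m\kappa^2 v_t$ after taking suprema over $\bs_0$ (note $\kappa<1$ so $m\kappa^2$ could exceed $1$, but this only affects constants, not the argument structure — see the obstacle remark below).

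Next I would iterate the recursion $v_{t+1}\le v_1+ m\kappa^2 v_t$. Unrolling gives $v_t\le v_1\sum_{j=0}^{t-1}(m\kappa^2)^j$, and I would bound this sum two ways: trivially by $v_1\cdot t$ (crudely dropping the geometric factor when it is $\le 1$, or more carefully keeping track when it is $\ge 1$), and by $v_1\cdot(1-m\kappa^2)^{-1}$ when $m\kappa^2<1$ via the geometric series. Taking the minimum yields $v_t\le m v_1\min\{t,(1-\kappa^2)^{-1}\}$ after absorbing the factor $m$ appropriately; the precise bookkeeping of where the $m$ lands is routine. To get the clean statement as written, one would note $(1-m\kappa^2)^{-1}\le m(1-\kappa^2)^{-1}$ is false in general, so the intended reading is almost surely that the one-step Lipschitz constant is really $\kappa$ on each coordinate sum without the $\sqrt m$ loss, i.e.\ the $m$ in the conclusion comes from $\mathrm{Var}(\sum) \le m\sum\mathrm{Var}$ rather than from the norm comparison — I would align the argument with whichever convention makes the constants match, and in the write-up I would simply carry the $m$ through honestly.

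The main obstacle is the interplay between the $l^1$ contraction hypothesis and the $l^2$ variance one wants to control, together with the factor $m$ and the possibility that $m\kappa^2\ge 1$. Resolving it cleanly requires being careful about which norm the variance refers to and making sure the geometric sum is handled in both regimes ($m\kappa^2<1$ giving the $(1-\kappa^2)^{-1}$ bound up to the factor $m$, and $m\kappa^2\ge 1$ where only the linear-in-$t$ bound survives but that is exactly the $t$ term in the minimum). Everything else — the tower property, the definition of $v_1$, iterating a linear recursion — is standard. Since this lemma is quoted verbatim from \cite[Lemma 3.1]{Hee}, I would in fact just cite that reference for the full computation and include only the decomposition and recursion above as a sketch.
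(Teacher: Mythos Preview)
The paper does not prove this lemma at all; it simply cites \cite[Lemma~3.1]{Hee}. So there is no ``paper's own proof'' to compare against beyond that citation, which you already noted.

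Your sketch has the right overall shape (total-variance decomposition plus iteration), but there is a genuine gap in the way you condition. You condition on $\bm Z_t$ and obtain the recursion $v_{t+1}\le v_1+m\kappa^2 v_t$. Iterating this gives $v_t\le v_1\sum_{j=0}^{t-1}(m\kappa^2)^j$, and when $m\kappa^2\ge 1$ that sum is \emph{exponential} in $t$, not linear: you cannot ``crudely drop the geometric factor'' to recover $v_1 t$ or $mv_1 t$ in that regime. So the linear-in-$t$ half of the stated bound is not obtained by your recursion.

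The fix is to condition on the \emph{first} step rather than the last. Writing
\[
\mathrm{Var}_{\bs_0}\bm Z_{t+1}
=\E_{\bs_0}\big[\mathrm{Var}(\bm Z_{t+1}\mid \bm Z_1)\big]
+\mathrm{Var}_{\bs_0}\big(\E[\bm Z_{t+1}\mid \bm Z_1]\big),
\]
the first term is $\le v_t$ by the Markov property, and for the second you use the $t$-step contraction on the map $\bs\mapsto\E_{\bs}\bm Z_t$: with $c:=\E_{\bs_0}\bm Z_1$,
\[
\mathrm{Var}_{\bs_0}\big(\E_{\bm Z_1}\bm Z_t\big)
\le \E_{\bs_0}\big\|\E_{\bm Z_1}\bm Z_t-\E_{c}\bm Z_t\big\|_2^2
\le \kappa^{2t}\,\E_{\bs_0}\|\bm Z_1-c\|_1^2
\le m\,\kappa^{2t}\,v_1,
\]
using $\|\cdot\|_2\le\|\cdot\|_1$ and $\|\cdot\|_1^2\le m\|\cdot\|_2^2$. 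This yields $v_{t+1}\le v_t+m\kappa^{2t}v_1$, hence
\[
v_t\le m v_1\sum_{j=0}^{t-1}\kappa^{2j}\le m v_1\,\min\{t,(1-\kappa^2)^{-1}\},
\]
which is exactly the claimed bound with the factor $m$ appearing once and in the right place. This is the argument underlying \cite[Lemma~3.1]{Hee} (and, in the one-dimensional case, \cite{Curie-Ising-classic}); swapping which step you condition on is the only change needed.
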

We employ this lemma to prove the upper bound on the variance of magnetization.
\begin{proposition}\label{prop:VariBound}
When $\beta<\beta_{cr}$, for any starting state $\bs$, we have $\displaystyle\sum_{i=1}^m{{\Var_\bs}(S_t^{(i)})}=O(n^{-1})$ and ${\Var_\bs} \Vert\bS_t\Vert_1=O(n^{-1})$. When $\beta=\beta_{cr}$, we have ${\Var_\bs} \|\bS_t\|_1=O(t/n^2)$.
\end{proposition}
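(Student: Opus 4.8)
The plan is to apply Lemma~\ref{VariLem} to the magnetization chain $\bm{Z}_t := \bS_t$, treating the two temperature regimes separately since the contraction constant $\kappa$ behaves differently. For $\beta < \beta_{cr}$, Proposition~\ref{VariProp} gives $\E_{\sigma,\sigma'}\|\ba\circ\bS_t - \ba\circ\bS_t'\|_1 \leq \rho_n^t \|\ba\circ\bs - \ba\circ\bs'\|_1$, and since $a_{\min}, a_{\max}$ are constants independent of $n$, this translates (up to a fixed multiplicative constant $a_{\max}/a_{\min}$) into the hypothesis of Lemma~\ref{VariLem} with $\kappa = \rho_n < 1$ for the chain $\bS_t$ under the unweighted $\ell^1$ norm; actually one should be slightly careful and instead feed the weighted chain $\ba\circ\bS_t$ into Lemma~\ref{VariLem}, which literally satisfies the contraction hypothesis with $\kappa=\rho_n$, then note $\Var_{\bs}(S_t^{(i)})$ and $\Var_{\bs}\|\bS_t\|_1$ are comparable to the corresponding variances of $\ba\circ\bS_t$. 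The single-step variance $v_1$ is $O(n^{-2})$: from the transition probabilities in Proposition~\ref{prop:Markov}, $\bS_{t+1}-\bS_t$ takes values in $\{0, \pm\frac{2}{n}\bm{e}_i\}$, so each coordinate increment is $O(n^{-1})$ and hence each coordinate's one-step variance is $O(n^{-2})$, giving $v_1 = \sup_{\bs_0}\Var_{\bs_0}\bS_1 = O(n^{-2})$. Then Lemma~\ref{VariLem} yields $v_t \leq m v_1 (1-\rho_n^2)^{-1}$.

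The key point for the high-temperature claim is that $(1-\rho_n^2)^{-1} = O(n)$: indeed $1-\rho_n = \frac{1}{2n\alpha}$ by the definition in \eqref{const}, and by Proposition~\ref{prop:eigen} $\alpha = \frac{1}{2(1-\beta/\beta_{cr})}$ is a positive constant independent of $n$, so $1-\rho_n \asymp n^{-1}$ and $1-\rho_n^2 = (1-\rho_n)(1+\rho_n) \asymp n^{-1}$. Therefore $v_t = O(n^{-2}) \cdot O(n) = O(n^{-1})$, which gives $\sum_i \Var_{\bs}(S_t^{(i)}) = O(n^{-1})$ directly (this is $\mathrm{tr}$ of the covariance, dominated by $v_t = \|{\cdot}\|_2$ of the covariance up to the factor $m$). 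For $\Var_{\bs}\|\bS_t\|_1$: writing $\|\bS_t\|_1 = \sum_i |S_t^{(i)}|$ and using that on the relevant state space the coordinates need not be sign-definite, one bounds $\Var_{\bs}\|\bS_t\|_1 \leq m \sum_i \Var_{\bs}(S_t^{(i)}) = O(n^{-1})$ by Cauchy--Schwarz (variance of a sum of $m$ terms is at most $m$ times the sum of variances), again absorbing the absolute values via $\Var(|X|) \leq \Var(X)$.

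For $\beta = \beta_{cr}$, we have $\rho_n < 1$ still but now $\rho_n \to 1$, and in fact the contraction hypothesis of Lemma~\ref{VariLem} fails to give a uniform $\kappa<1$; instead we invoke the other branch of the conclusion, $v_t \leq m v_1 \cdot t$. (Proposition~\ref{VariProp} holds for any $n$ with $\rho_n<1$, and at $\beta=\beta_{cr}$ one still has $\rho_n < 1$ for each finite $n$ — this needs checking: from the identity $\rho_n = 1 - \frac1n + \frac{\beta}{n}\sum a_i p_i k_{ij} = 1 - \frac1n(1 - \beta/\beta_{cr}) = 1 - \frac1n \cdot 0 = 1 - 0$... so actually $\rho_n = 1$ at criticality, meaning the $\mathbf{Q}$-contraction is only non-strict. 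One then works with $\mathbf{A}$ rather than $\mathbf{Q}$, or observes that the relevant contraction constant coming from $\mathbf{A}$ — which has the extra $-\frac{\beta}{n^2}\mathbf{C}$ term — is $1 - c/n$ for some $c>0$, salvaging $\kappa = 1-c/n < 1$; this is exactly the point where the distinction between $\mathbf{A}$ and $\mathbf{Q}$ matters.) With $\kappa = 1 - c/n$ we get $(1-\kappa^2)^{-1} = O(n)$ as before, but to obtain the sharper $O(t/n^2)$ we use $v_t \leq m v_1 \min\{t, (1-\kappa^2)^{-1}\} \leq m v_1 t = O(n^{-2}) \cdot t = O(t/n^2)$, and then $\Var_{\bs}\|\bS_t\|_1 \leq m \sum_i \Var_{\bs}(S_t^{(i)}) \leq m \cdot m \cdot v_t = O(t/n^2)$.

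The main obstacle I anticipate is the critical-temperature case: making Lemma~\ref{VariLem} applicable requires a genuine strict contraction $\kappa < 1$, and Proposition~\ref{VariProp} only delivers $\rho_n = 1$ at $\beta = \beta_{cr}$. The fix is to use the full matrix $\mathbf{A}$ from Lemma~\ref{L_contr} instead of its upper bound $\mathbf{Q}$: $\mathbf{A} = \mathbf{Q} - \frac{\beta}{n^2}\mathbf{C}$ with $\mathbf{C} = \mathrm{diag}(k_{11},\dots,k_{mm})$ strictly positive (when all $k_{ii}>0$; if some $k_{ii}=0$ one needs a separate argument, but the critical-temperature theorem already assumes $\mathbf{K}$ positive definite, forcing $k_{ii}>0$), so the Perron eigenvalue of $\mathbf{A}$ is $\rho_n - \Theta(n^{-2}) < 1$ — wait, that only gives $\kappa = 1 - \Theta(n^{-2})$, yielding $(1-\kappa^2)^{-1}=O(n^2)$ and hence $v_t = O(1/n^2)\cdot\min\{t,O(n^2)\}$, which is still $O(t/n^2)$ for the range $t = O(n^2)$ of interest. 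So the $\min\{t,\cdot\}$ in Lemma~\ref{VariLem} does the work and the crude bound on $\kappa$ suffices; the delicate part is simply verifying that the Markov-chain-magnetization $\ba\circ\bS_t$ genuinely satisfies the hypothesis of Lemma~\ref{VariLem} with this $\kappa$, which follows by rerunning the proof of Proposition~\ref{VariProp} with $\mathbf{A}$ in place of $\mathbf{Q}$ and using $\|\mathbf{A}^t\bs\|_1 \le \kappa^t(\sum s_i^2/p_i)^{1/2}$ in place of Lemma~\ref{lemma:Sym} (the symmetrization $\mathbf D^{-1}\mathbf A\mathbf D$ is still symmetric).
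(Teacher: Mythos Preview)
Your approach is correct and matches the paper's: feed the weighted chain $\ba\circ\bS_t$ into Lemma~\ref{VariLem} via the contraction of Proposition~\ref{VariProp}, with $v_1=O(n^{-2})$, and let the two branches of the $\min$ handle the subcritical and critical cases respectively. Your detour at criticality through the matrix $\mathbf{A}$ to manufacture a strict $\kappa<1$ is unnecessary---the paper simply takes $\kappa=\rho_n=1$, a harmless extension of Lemma~\ref{VariLem} whose proof yields $v_t\le m v_1 t$ verbatim for $\kappa\le 1$.
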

\begin{proof}
Note that the increments of $S_1^{(i)}$ from $S^{(i)}$ are bounded by $2/n$ in absolute value for all $1\leq i\leq m$. Therefore,
\[
\sum_{i=1}^m{\Var_\bs a_iS_1^{(i)}\leq a_{max}^2(2/n)^2}.
\]
Recall that $\beta<\beta_{cr}$ is equivalent to $\rho_n <1$. Thus, by Proposition \ref{VariProp} and Lemma \ref{VariLem}, we obtain
\[
a_{min}^2\sum_{i=1}^{m}\Var_\bs(S_t^{(i)})\leq \sum_{i=1}^m\Var_\bs(a_i S_t^{(i)})\leq m \frac{4a_{max}^2}{n^2}\frac{1}{1-\rho_n ^2}=\frac{8\alpha m a_{max}^2}{n(1+\rho_n )}\leq \frac{8\alpha m a_{max}^2}{n},
\]
to have $\sum_{i=1}^m{\Var_\bs(S_t^{(i)})}=O(n^{-1})$. In addition,
\begin{align*}
    \Var_\bs\Vert\bS_t\Vert_1
    &=\sum_{i=1}^m\Var_\bs|S_t^{(i)}|+2\sum_{i<j}\text{Cov}(|S_t^{(i)}|,|S_t^{(ij}|)\\
    &\leq \sum_{i=1}^m\Var_\bs S_t^{(i)}+2\sum_{i<j}\sqrt{\Var_\bs S_t^{(i)}}\sqrt{\Var_s S_t^{(j)}}\\
    &\leq \sum_{i=1}^m\Var_\bs S_t^{(i)}+\sum_{i<j}(\Var_\bs S_t^{(i)}+\Var_\bs S_t^{(j)})
    =m\sum_{i=1}^m\Var_\bs S_t^{(i)} = O(n^{-1}).
\end{align*}
Moreover, $\beta=\beta_{cr}$ implies $\rho_n =1$. Thus, again by Proposition \ref{VariProp} and Lemma \ref{VariLem}, we obtain
\[
a_{min}^2\sum_{i=1}^{m}\Var_\bs(S_t^{(i)})\leq \sum_{i=1}^m\Var_\bs(a_i S_t^{(i)})\leq m \frac{4a_{max}^2}{n^2}t=O\left(\frac{t}{n^2}\right).
\]
and $\Var_\bs\|\bS_t\|_1=O(t/n^2)$ in a similar way. 
\end{proof}

Furthermore, we examine the expected number of positive and negative spins on subsets of $G_i$ in the high temperature regime to establish the cutoff.

\begin{proposition}\label{Prop:expected}
When $\beta<\beta_{cr}$, for any $B\subseteq G_i$, and a chain $(\sigma_t)_{t\geq 0}$ starting at $\sigma$, let us define
\[
M_t(B):=\frac{1}{2}\sum_{v\in B}\sigma_t{(v)}.
\]
Then the following hold:
\begin{enumerate}
    \item $|\E_\sigma M_t(B)|\leq |B|\rho_n ^t\sqrt{p_i}$,
    \item $\Var_\sigma(M_t(B))=O(n)$, and $\E_\sigma|M_t(B)|\leq |B|\rho_n ^t\sqrt{p_i}+O(\sqrt{n})$,
    \item For any starting magnetization $\bs\geq 0$,
    \begin{align*}
    \E_\bs\Vert\bS_t\Vert_1&\leq \rho_n ^t\left(\sum_{i=1}^m\frac{(s^{(i)})^2}{p_i}\right)^{1/2}+O(n^{-1/2}) ,\quad \E_\bs\Vert\ba^{\top}\bS_t\Vert_1\leq \rho_n ^t\ba^{\top}\bs+O(n^{-1/2}),
\end{align*}
\begin{align*}
    \text{and}\quad 0\leq\E_\bs S_i^{(i)}\leq \sqrt{p_i}\rho_n ^t\left(\sum_{i=1}^m\frac{s^{(i)}}{p_i}\right)^{1/2}.
\end{align*}
\end{enumerate}
\end{proposition}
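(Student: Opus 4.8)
The plan is to prove the three parts in order, using the contraction machinery of Lemma \ref{L_contr}, Lemma \ref{lemma:Sym}, and the variance control of Lemma \ref{VariLem}, combined with the standard trick of comparing the chain started from $\sigma$ to the chain started from an extremal configuration via the monotone coupling.

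\textbf{Part (1): the mean.} First I would observe that $2M_t(B)=\sum_{v\in B}\sigma_t(v)$ is, up to sign, the Hamming distance on $B$ between $\sigma_t$ and a suitable monotone comparison chain. Concretely, let $\sigma^{-}$ (resp.\ $\sigma^{+}$) be the configuration agreeing with $\sigma$ outside $G_i$ but with all spins on $G_i$ equal to $-1$ (resp.\ $+1$). Then $\sigma^{-}\le\sigma\le\sigma^{+}$, and under the monotone coupling the chains started from these three configurations stay ordered, so $\textnormal{dist}(\sigma_t^{(j)},(\sigma_t^{-})^{(j)})$ is $0$ for $j\ne i$ and equals $\frac{1}{2}\sum_{v\in G_i}(\sigma_t(v)+1)$ for $j=i$. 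Restricting the sum to $B\subseteq G_i$ and applying the matrix bound of Lemma \ref{L_contr} together with Lemma \ref{lemma:Sym} (with the vector $\bs$ supported on coordinate $i$ with entry $|B|$) yields $|\E_\sigma M_t(B)|\le |B|\rho_n^t\sqrt{p_i}$; the $\sqrt{p_i}$ comes exactly from the $\bm{e}_j^\top \mathbf{Q}^t\bs\le\sqrt{p_j}\rho_n^t(\sum (s^{(i)})^2/p_i)^{1/2}$ estimate. One has to be slightly careful that the monotone comparison changes the magnetization, so one symmetrizes between the $\sigma^{-}$ and $\sigma^{+}$ comparisons (equivalently uses $\E_\sigma M_t(B)=\tfrac12(\E M_t(B)\text{ vs }\sigma^{\pm})$) to get the two-sided bound without an additive error.

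\textbf{Part (2): the variance and the $L^1$ bound.} For the variance I would apply Lemma \ref{VariLem} to the (one-dimensional, so $m=1$ there) Markov chain $\bm{Z}_t:=M_t(B)$ — but $M_t(B)$ alone is not Markov, so instead I apply Lemma \ref{VariLem} to the full magnetization chain $\bS_t$ (which is Markov by Proposition \ref{prop:Markov}), using Proposition \ref{VariProp} (contraction in the $\ba$-weighted $\ell^1$ norm with rate $\rho_n<1$) to supply the hypothesis with $\kappa=\rho_n$, and then bound $\Var_\sigma M_t(B)$ by a linear combination of $\Var_\sigma S_t^{(i)}$ plus cross-terms from the spins in $B$ versus $G_i\setminus B$. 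Since each one-step increment of $M_t(B)$ is $O(1)$ and there are $|B|=O(n)$ many sites contributing, $v_1=\Var M_1(B)=O(n)$ — the initial variance scales like $n$, not like $1/n^2$, because $M_t(B)$ is an unnormalized sum — so $v_t\le m v_1(1-\rho_n^2)^{-1}=O(n)$. Then $\E_\sigma|M_t(B)|\le |\E_\sigma M_t(B)|+\sqrt{\Var_\sigma M_t(B)}\le |B|\rho_n^t\sqrt{p_i}+O(\sqrt n)$ by Jensen/Cauchy--Schwarz.

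\textbf{Part (3): the $\ell^1$ norm of the magnetization.} Starting from $\bs\ge 0$, I would use the monotone coupling against the all-$+1$ configuration, whose magnetization is $(p_1,\dots,p_m)$; but more directly, $\E_\bs\|\bS_t\|_1=\E_\bs\sum_i|S_t^{(i)}|\le \sum_i\bigl(|\E_\bs S_t^{(i)}|+\sqrt{\Var_\bs S_t^{(i)}}\bigr)$, where $|\E_\bs S_t^{(i)}|\le (\mathbf{Q}^t\bs)_i$ follows from Lemma \ref{lemma:MagCont} applied with $\bs'=\mathbf 0$ (valid since $\bs\ge 0$ and the dynamics started from $\mathbf 0$ is symmetric, hence has mean $0$), and then Lemma \ref{lemma:Sym} gives $\mathbbm{1}^\top\mathbf{Q}^t\bs\le\rho_n^t(\sum (s^{(i)})^2/p_i)^{1/2}$ and $\bm{e}_i^\top\mathbf{Q}^t\bs\le\sqrt{p_i}\rho_n^t(\sum s^{(i)}/p_i)^{1/2}$ (here using $\bs\ge 0$ so $(s^{(i)})^2\le p_i s^{(i)}$ coordinatewise is not needed — one just keeps the stated form). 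The $O(n^{-1/2})$ error terms come from the variance contributions via Proposition \ref{prop:VariBound} ($\sum_i\Var_\bs S_t^{(i)}=O(n^{-1})$, so $\sum_i\sqrt{\Var_\bs S_t^{(i)}}=O(n^{-1/2})$). The $\ba$-weighted version is identical using the first estimate of Lemma \ref{DistLem} (or directly $\ba^\top\mathbf{Q}^t=\rho_n^t\ba^\top$ since $\ba$ is the left Perron eigenvector of both $\mathbf{Q}$ and $\mathbf{B}$).

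\textbf{Main obstacle.} The delicate point is Part (1): squeezing out the clean multiplicative bound $|B|\rho_n^t\sqrt{p_i}$ with no additive error, since $M_t(B)$ is a partial (not full-block) sum and the naive monotone comparison to an all-$\pm1$ configuration perturbs the starting magnetization. One resolves this by comparing to the two extremal configurations simultaneously and exploiting linearity of expectation, so that the perturbation cancels; the $\sqrt{p_i}$ factor then emerges precisely from the $\mathbf{D}$-conjugation trick in the proof of Lemma \ref{lemma:Sym}. The variance bound $O(n)$ in Part (2) is also worth stating carefully, since it is the unnormalized scaling (contrast with $O(n^{-1})$ for $\Var_\bs S_t^{(i)}$) — this is what makes $\E_\sigma|M_t(B)|$ have an $O(\sqrt n)$ correction rather than $O(n^{-1/2})$.
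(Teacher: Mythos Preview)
Your Part~(3) is essentially the paper's argument; the only thing you gloss over is the parity issue ($\mathbf{0}$ may not be an admissible magnetization if some $np_i$ is odd), which the paper handles by the symmetrized starting law $\nu=\tfrac12\delta_{\bm{x}}+\tfrac12\delta_{-\bm{x}}$ with $\bm{x}=\bigl(\tfrac{1-(-1)^{|G_i|}}{2}\bigr)_i$, so that $\E_\nu S_t^{(i)}=0$ for free.

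Part~(1), however, has a real gap. Your claim that under the monotone coupling of $\sigma$ and $\sigma^{-}$ (which differ only on $G_i$) one has $\textnormal{dist}(\sigma_t^{(j)},(\sigma_t^{-})^{(j)})=0$ for $j\ne i$ is false for $t\ge1$: a disagreement on $G_i$ feeds into the update probabilities on every other block, so the distance spreads. More seriously, Lemma~\ref{L_contr} bounds only the \emph{full-block} Hamming distance $\textnormal{dist}(\sigma_t^{(i)},\sigma_t'^{(i)})$; it says nothing about the distance restricted to a subset $B\subsetneq G_i$, so ``applying the matrix bound with the vector $\bs$ supported on coordinate $i$ with entry $|B|$'' is not justified. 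Your proposed fix (symmetrize between $\sigma^{+}$ and $\sigma^{-}$ and cancel the perturbation) does not work either, since $-\sigma^{+}\ne\sigma^{-}$ when $\sigma$ is arbitrary off $G_i$. The paper's key idea, which you are missing, is to compare to the \emph{globally} extremal configuration $\mathbf{1}$ (not $\sigma^{+}$): then all vertices of $G_i$ are exchangeable under the dynamics, so $\E_{\mathbf{1}}\sigma_t(v)$ is the same for every $v\in G_i$, whence $\E_{\mathbf{1}}M_t(B)=\tfrac{|B|}{|G_i|}\E_{\mathbf{1}}M_t(G_i)$. The full-block quantity is then controlled by coupling $\mathbf{1}$ to the stationary chain (which has mean zero) via Lemma~\ref{DistLem}, and finally monotonicity sandwiches $\E_\sigma M_t(B)$ between $\E_{-\mathbf{1}}M_t(B)$ and $\E_{\mathbf{1}}M_t(B)$.

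Part~(2) is also incomplete. You correctly note that $M_t(B)$ is not Markov, but your workaround --- ``bound $\Var_\sigma M_t(B)$ by a linear combination of $\Var_\sigma S_t^{(i)}$ plus cross-terms'' --- is not an argument, and the line $v_1=\Var M_1(B)=O(n)$ is off (the one-step increment of $M_t(B)$ is $O(1)$, so $v_1=O(1)$, not $O(n)$). The paper's device is the FKG/positive-correlation inequality for ferromagnetic Ising spins: since $\mathrm{Cov}(\sigma_t(v),\sigma_t(w))\ge0$ for all $v,w$, one has $\Var\!\bigl(\sum_{v\in B}\sigma_t(v)\bigr)\le\Var\!\bigl(\sum_{v\in G_i}\sigma_t(v)\bigr)=n^2\Var S_t^{(i)}=O(n)$ by Proposition~\ref{prop:VariBound}. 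You do not invoke positive correlation anywhere, and without it there is no obvious way to pass from the variance of the full-block magnetization to that of a subset sum.
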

\begin{proof}
Let $\mathbf{1}$ denote the configuration of all positive spins, and let $(\sigma_t^{\mathbf{1}}, \sigma_t^{\mu})$ be the monotone coupling with starting configuration $(\mathbf{1},\mu)$, where $\mu$ is the stationary distribution. Then, by Lemma \ref{DistLem} and triangular inequality,
\[
\E_{\mathbf{1}} \left[M^{\bone}_t(G_i)\right] \leq \E_{(\mathbf{1},\mu)}\left[|M^{\bone}_t(G_i)-M^{\mu}_t(G_i)|\right]+\E_{\mu}\left[M^{\mu}_t(G_i)\right]\leq n\sqrt{p_i}\rho_n ^t,
\]
considering that $\E_{\mu}\left[M^{\mu}_t(G_i)\right]=0$. Hence, by symmetry of vertices on set $G_i$, we have $\E_{\mathbf{1}}\left[ M^{\bone}_t(B)\right]\leq |B|\rho_n ^t/\sqrt{p_i}$. Accordingly, for any starting state $\sigma$, by monotonicity, we obtain $\E_\sigma M_t(B)\leq \E_{\mathbf{1}} M^{\bone}_t(B) \leq|B|\rho_n ^t\sqrt{p_i}$. Similarly, considering the configuration of all negative spins (denoted by $-\bone$), we obtain $\E_\sigma M_t(B)\geq-|B|\rho_n ^t\sqrt{p_i}$. Consequently, $|\E_\sigma M_t(B)|\leq |B|\rho_n ^t\sqrt{p_i}$ for any starting configuration $\sigma$.

For part (ii), because the spins are positively correlated (\cite{correlation}), from Proposition \ref{prop:VariBound},
\[
\Var(\sum_{v\in B}\sigma_t(v))\leq \Var(\sum_{v\in G_i}\sigma_t(v))\leq n^2\Var(S_t^{(i)})\leq cn,
\]
to obtain $\Var(M_t(B))=O(n)$. Moreover, for the expectation of absolute value,
\begin{align*}
    \E_\sigma|M_t(B)|
    &\leq \sqrt{\E_\sigma\left[|M_t(B)|^2\right]}=\sqrt{(\E_\sigma[M_t(B)])^2+\Var_\sigma(M_t(B))}\\
    &\leq |\E_\sigma M_t(B)|+\sqrt{\Var_{\sigma}(M_t(B))}\leq |B|\rho_n ^t\sqrt{p_i}+O(\sqrt{n}).
\end{align*}

For part (iii), consider the vector $\bm{x}:=\left(\frac{1-(-1)^{|G_i|}}{2}\right)_{i=1}^m\in\mathbb{R}^m$, which is the least non-negative starting configuration on $G_i$ and define the starting distribution $\nu$, which yields $\bm{x}$ with probability $1/2$ and $-{\bm{x}}$ with probability $1/2$. For any starting state $\bs\geq \mathbf{0}$, we apply $\bs$ and $\nu$ on Lemma \ref{lemma:MagCont}. Note that $\bs\geq{\bm{x}}\geq-{\bm{x}}$ and by symmetry, $\E_\nu S_t^{(i)}=0$. Hence,
\begin{align}\label{eq1}
0\leq
\begin{pmatrix}
\mathbb{E}_{\bs}S_t^{(1)}-\E_{\nu}S_t'^{(1)}\\
\mathbb{E}_{\bs}S_t^{(2)}-\E_{\nu}S_t'^{(2)}\\
\vdots \\
\mathbb{E}_{\bs}S_t^{(m)}-\E_{\nu}S_t'^{(m)}\\
\end{pmatrix}
=
\begin{pmatrix}
\mathbb{E}_{\bs}S_t^{(1)}\\
\mathbb{E}_{\bs}S_t^{(2)}\\
\vdots \\
\mathbb{E}_{\bs}S_t^{(m)}\\
\end{pmatrix}
\leq \frac{1}{2}\mathbf{Q}^t(\bs-{\bm{x}})+\frac{1}{2}\mathbf{Q}^t(\bs+{\bm{x}})=\mathbf{Q}^t\bs.
\end{align}
Therefore, $\sum_{i=1}^m\E_\bs S_t^{(i)}\leq\Vert\mathbf{Q}^t\bs\Vert_1\leq \rho_n ^t\left(\sum_{i=1}^m\frac{(s^{(i)})^2}{p_i}\right)^{1/2}$ by Lemma \ref{lemma:Sym}. Finally,
\begin{align*}
    \E_\bs\Vert\bS_t\Vert_1
    &=\sum_{i=1}^m\E_\bs{|S_t^{(i)}|}
    \leq\sum_{i=1}^m\sqrt{\E_\bs{|S_t^{(i)}|^2}}
    =\sum_{i=1}^m\sqrt{(\E_\bs{S_t^{(i)}})^2+\Var_\bs{S_t^{(i)}}}\\
    &\leq \sum_{i=1}^m\left(|\E_\bs{S_t^{(i)}}|+\sqrt{\Var_\bs{S_t^{(i)}}}\right)
    =\sum_{i=1}^m\left(\E_\bs{S_t^{(i)}}+\sqrt{\Var_\bs{S_t^{(i)}}}\right)\\
    &\leq \rho_n ^t\left(\sum_{i=1}^m\frac{(s^{(i)})^2}{p_i}\right)^{1/2}+O(n^{-1/2}).
\end{align*}
Moreover, by multiplying $\ba^{\top}$ on both sides of \eqref{eq1}, we obtain
\begin{align}\label{eq2}
    0\leq\E_{\bs}\left[\ba^{\top}\bS_t\right] \leq \rho_n ^t\ba^{\top}\bs.
\end{align}
Similarly, combining Proposition \ref{prop:VariBound} and (\ref{eq2}),
\begin{align*}
    \E_\bs\Vert\ba^{\top}\bS_t\Vert_1
    &=\sum_{i=1}^m\E_\bs{|a_i S_t^{(i)}|}
    \leq\sum_{i=1}^m\sqrt{\E_\bs{|a_i S_t^{(i)}|^2}}\\
    &=\sum_{i=1}^m\sqrt{(\E_\bs{a_i S_t^{(i)}})^2+\Var_\bs{a_i S_t^{(i)}}}
    \leq \sum_{i=1}^m\left(|\E_\bs{a_i S_t^{(i)}}|+\sqrt{\Var_\bs{a_i S_t^{(i)}}}\right)\\
    &=\E_\bs\left[ \ba^{\top}\bS_t \right]+O(n^{-1/2})\leq \rho_n ^t\ba^{\top}\bs+O(n^{-1/2}).
\end{align*}
Now, by multiplying $\bm{e}_i^{\top}$ on both sides of (\ref{eq1}), we obtain,
\[
0\leq\E_\bs S_i^{(i)}\leq \bm{e}_i^{\top}\mathbf{Q}^t\bs\leq \sqrt{p_i}\rho_n ^t\left(\sum_{i=1}^m\frac{s^{(i)}}{p_i}\right)^{1/2}.
\]
where the last inequality follows from Lemma \ref{lemma:Sym}.
\end{proof}

\subsection{Coupling after the same magnetization}\label{subsec:after}
We claim that once the magnetizations agree, $O(n\log n)$ steps are sufficient to obtain full agreement at all temperatures. Note that we only require $O(n)$ steps to have full agreement in the high temperature regime $(\beta<\beta_{cr})$ which will be proved in Section \ref{sec:high}.
\begin{lemma}\label{Lem:CoupleSameMag}
For two configurations $\sigma$, $\sigma'$ satisfying $\bS(\sigma)=\bS(\sigma')$, there is a constant $c(\beta)$ such that
\[
\limsup_{n\rightarrow\infty}\prob_{\sigma,\sigma'}(\tau>c(\beta)n\log n)=0,
\]
where $\tau:=\min\{t\geq 0:\sigma_t=\sigma'_t \}$.
\end{lemma}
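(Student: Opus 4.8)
The plan is to build a coupling of the two chains that keeps the block magnetizations equal forever and drives the Hamming distance monotonically to zero, and then to estimate the absorption time of the resulting slowed pure‑death chain.

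Set $D^{(i)}_{+}:=\{v\in G_i:\sigma(v)=+1,\ \sigma'(v)=-1\}$ and $D^{(i)}_{-}:=\{v\in G_i:\sigma(v)=-1,\ \sigma'(v)=+1\}$. Since $M^{(i)}(\sigma)=M^{(i)}(\sigma')$, the two configurations have the same number of $+1$'s on each $G_i$, so $|D^{(i)}_{+}|=|D^{(i)}_{-}|=:d_i$; fix, as a function of the current pair of configurations, a bijection $\phi_i\colon D^{(i)}_{+}\to D^{(i)}_{-}$. At each step draw $I$ uniform on $V$ and $U$ uniform on $[0,1]$, independently. If $\sigma(I)=\sigma'(I)$, update site $I$ in both chains using $U$; as the magnetizations agree and the spins at $I$ agree, the heat‑bath rules at $I$ coincide, so the two chains receive the same new spin and neither the disagreement set nor the magnetizations change. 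If $I\in D^{(i)}_{+}$, update site $I$ in the first chain and site $\phi_i(I)$ in the second; both of these sites currently carry spin $+1$ in their respective chains, and since the magnetizations agree the heat‑bath probability of a $+1$ outcome is a common value $q$ (the mean field at a currently‑$+1$ site of block $G_i$ depends only on $\bm{M}$ and $i$); set both new spins to $+1$ if $U\le q$ and to $-1$ otherwise. If $I\in D^{(i)}_{-}$ proceed symmetrically with $\phi_i^{-1}(I)$ and $\pm1$ interchanged. One then checks the three facts that make this work: (a) marginally each chain updates a uniform site with the correct heat‑bath law, so this is a legitimate coupling; (b) every step changes $\bm{M}(\sigma_t)$ and $\bm{M}(\sigma'_t)$ by the same vector, so $\bS(\sigma_t)=\bS(\sigma'_t)$ for all $t\ge0$; and (c) the total Hamming distance $W_t:=\sum_{i=1}^m \textnormal{dist}(\sigma_t^{(i)},\sigma_t'^{(i)})$ is non‑increasing, and whenever a disagreement site is chosen one of the two outcomes for $U$ turns both updated sites into agreement sites and lowers $W_t$ by $2$.

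It remains to bound $\tau=\inf\{t:W_t=0\}$. By Remark \ref{Rmk:UniformlyBdd} the heat‑bath arguments appearing above are bounded in absolute value by the $n$‑independent constant $C_k$, so $r_{\pm}$ stays in $[\tfrac12(1-\tanh\beta C_k),\tfrac12(1+\tanh\beta C_k)]$; hence, conditional on a disagreement site being picked, the probability that $W_t$ drops by $2$ is at least $p_0:=\tfrac12(1-\tanh\beta C_k)>0$, a constant depending only on $\beta$ and the fixed model parameters. Since a disagreement site is picked with probability $W_t/n$, we get $\prob(W_{t+1}=W_t-2\mid \sigma_t,\sigma'_t)\ge p_0 W_t/n$ while $W_{t+1}\le W_t$ always. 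Writing $W_0=2d_0$ with $d_0\le n/2$ and using a successive stochastic‑domination (coupon‑collector) argument, $\tau$ is stochastically dominated by $\sum_{k=1}^{\lfloor n/2\rfloor}\widehat T_k$ with independent $\widehat T_k\sim\mathrm{Geom}(2kp_0/n)$. Then $\E\sum_k\widehat T_k\le \tfrac{n}{2p_0}\bigl(1+\log\tfrac n2\bigr)$ and $\Var\sum_k\widehat T_k\le\sum_k(2kp_0/n)^{-2}=O(n^2)$, so Chebyshev's inequality yields $\prob_{\sigma,\sigma'}(\tau>c(\beta)n\log n)=O((\log n)^{-2})\to0$ once $c(\beta)$ is chosen large enough, e.g.\ $c(\beta)=2/p_0(\beta)$.

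The only genuinely delicate step is the construction of the coupling, specifically the cross‑chain pairing of $(+,-)$ and $(-,+)$ disagreement sites within a common block via the bijections $\phi_i$: this is precisely what lets the Hamming distance decrease without ever unbalancing the two magnetizations (the plain monotone coupling of Section \ref{subsec:monotone coupling} fails here, since equal magnetizations with genuine disagreement force incomparable configurations). Once the coupling is in place, the hitting‑time estimate is routine.
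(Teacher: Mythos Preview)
Your proof is correct and follows essentially the same route as the paper: the coupling you build (pairing $(+,-)$ and $(-,+)$ disagreement sites within each block so that both chains always update a site of the same current spin, hence with the same heat-bath probability) is exactly the paper's construction, up to your use of a fixed bijection $\phi_i$ versus the paper's uniformly random choice of the paired site. The only difference is in the hitting-time bound---the paper uses the contraction $\E[d_{t+1}\mid\mathcal F_t]\le(1-c_1(\beta)/n)d_t$ followed by Markov's inequality to get a $1/n$ tail, whereas you use coupon-collector stochastic domination plus Chebyshev for an $O((\log n)^{-2})$ tail; both suffice for the stated conclusion.
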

\begin{proof}
We construct the coupling as follows. First, we take random variables $I$ uniformly on the vertex set $V$ and $U$ uniformly on $[0,1]$, independently. We then suppose $I=i\in G_j$. For each $\sigma$, we define the spin $S^{I}$ by
\[
S^{I}=
\begin{cases}
+1,& 0<U\leq r_+(\sum_{1\leq l\leq m}{K_{jl}M_l}-K_{jj}\sigma(I)),\\
-1,& r_+(\sum_{1\leq l\leq m}{K_{jl}M_l}-K_{jj}\sigma(I))<U\leq 1,
\end{cases}
\]
and determine the next configuration by
\[
\sigma_1(v)=
\begin{cases}
\sigma(v),& v\neq I,\\
S^{I},& v=I,
\end{cases}
\]
Now, for $\sigma'$, if $\sigma'(I)=\sigma(I)$, we determine the configuration by
\[
\sigma'_1(v)=
\begin{cases}
\sigma(v),& v\neq I,\\
S^{I},& v=I,
\end{cases}
\]
and if $\sigma'(I)\neq\sigma(I)$, we choose $I'$ uniformly on the set $\{i\in G_j : \sigma'(i)=\sigma(I),~\sigma'(i)\neq\sigma(i)\},$
and determine the configuration by
\[
\sigma'_1(v)=
\begin{cases}
\sigma(v),& v\neq I',\\
S^{I},& v=I'.
\end{cases}
\]
Note that the site is chosen uniformly on $V$, and this is also the Glauber dynamics. Recall the $i$-th Hamming distance from \eqref{eq:distance} and define\\[-0.3cm]
\[
d^{(i)}_t:=\textnormal{dist}(\sigma^{(i)}_t,\sigma'^{(i)}_t),\quad d_t:=\sum_{i=1}^m{d^{(i)}_t}.
\]\\[-0.3cm]
As mentioned in Remark \ref{Rmk:UniformlyBdd}, $r_+(s)$ and $r_-(s)$ are bounded below uniformly for $s$ in the domain; hence, we have a constant $c_1(\beta)$ such that for all possible $s$, $\min\{r_+(s),r_-(s)\}\geq c_1(\beta)$. Because $d^{(i)}$ decreases by $-1$ if $\sigma'(I)\neq\sigma(I)$ and $S^I=\sigma(I')=\sigma'(I)$, and otherwise remains the same, we have
\begin{align*}
\E\left[d^{(i)}_{t+1}-d^{(i)}_t|\mathcal{F}_t\right]\leq -c_1(\beta)\frac{d^{(i)}_t}{n},
\end{align*}
to get $\E[d_{t+1}|\mathcal{F}_t]\leq -\left(1-\frac{c_1(\beta)}{n}\right)d_t$ by summing from $i=1$ to $i=m$. Hence $d_t\left(1-\frac{c_1(\beta)}{n}\right)^{-t}$ is a non-negative supermartingale, which indicates that
\[
\E[d_t]\leq\E[d_0]\left(1-\frac{c_1(\beta)}{n}\right)^{t}\leq \E[d_0]e^{-c_1(\beta)t/n}\leq ne^{-c_1(\beta)t/n}.
\]
There exist a sufficiently large constant $c(\beta)$ that makes the right-hand side less than $n^{-1}$ when $t\geq c(\beta)n\log n$. Hence,
\[
\prob_{\sigma,\sigma'}(\tau>c(\beta)n\log n)\leq \prob_{\sigma,\sigma'}(d_{c(\beta)n\log n}\geq 1)\leq \E_{\sigma,\sigma'}(d_{c(\beta)n\log n})< n^{-1}.
\]
\end{proof}

\subsection{Upper bound of stopping time}\label{subsec:superMG}
In closing, the following lemma is helpful to show the upper bound of the mixing time in the high temperature regime ($\beta\leq\beta_{cr}$) because certain functions for magnetization satisfies supermartingale property.
\begin{lemma}\label{Lem:SupMar}
Let $(W_t)_{t\geq 0}$ be a non-negative supermartingale with a stopping time $\tau$ satisfying
\begin{enumerate}
    \item $W_0=k$,
    \item $|W_{t+1}-W_t|\leq B<\infty$ for some constant $B$,
    \item $\Var(W_{t+1}|\mathcal{F}_t)\geq \sigma^2>0$ on the event $\{\tau>t\}$.
\end{enumerate}
Then, for $u>12B^2/\sigma^2$, $\prob_k(\tau>u)\leq \frac{4k}{\sigma \sqrt{u}}$ holds.
\end{lemma}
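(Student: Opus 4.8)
The plan is to treat $(W_t)$ as a supermartingale perturbed from a martingale and run an optional-stopping / second-moment argument at a deterministic time $u$, combined with a maximal inequality to control the walk before $\tau$. The guiding intuition is that a non-negative supermartingale starting at $k$ with increments bounded by $B$ and conditional variance at least $\sigma^2$ (while $\tau>t$) behaves like a mean-zero-or-downward random walk with diffusive spread $\sim\sigma\sqrt{t}$; such a walk started at height $k$ must hit $0$ — and hence $\tau$ must occur — by time $O((k/\sigma)^2)$, and the tail at time $u$ decays like $k/(\sigma\sqrt{u})$. Since $\tau$ is assumed to be a stopping time bounded by the hitting dynamics of $W$ (I will use that on $\{\tau>t\}$ the variance lower bound holds, which is exactly what prevents $W$ from getting stuck), it suffices to estimate $\prob_k(W_{t}>0 \text{ for all } t\le u)$ from above.

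First I would introduce the stopped process $\widetilde W_t:=W_{t\wedge\tau}$, which remains a non-negative supermartingale with $\widetilde W_0=k$. I would then look at the process $Y_t:=\widetilde W_t^2 - A_t$, where $A_t:=\sum_{s<t\wedge\tau}\Var(W_{s+1}\mid\mathcal F_s)$ is the predictable quadratic-variation-type compensator; using the supermartingale property of $\widetilde W$ together with $\widetilde W\ge 0$ one gets $\E[\widetilde W_{t+1}^2-\widetilde W_t^2\mid\mathcal F_t]\ge \Var(W_{t+1}\mid\mathcal F_t)\mathbbm{1}_{\tau>t}$ (the cross term $2\widetilde W_t\,\E[\widetilde W_{t+1}-\widetilde W_t\mid\mathcal F_t]$ is $\le 0$), so $Y_t$ is a submartingale. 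Hence $\E[\widetilde W_u^2]\ge \E[A_u]\ge \sigma^2\,\E[\,t\wedge\tau \text{-count}\,]\ge \sigma^2 u\,\prob_k(\tau>u)$, because on $\{\tau>u\}$ the compensator has accumulated at least $\sigma^2 u$. On the other hand I must bound $\E[\widetilde W_u^2]$ from above; this is where the increment bound $B$ and the maximal inequality enter. Writing $\widetilde W_t = k + N_t + D_t$ with $N_t$ a martingale (increments bounded by $2B$, quadratic variation $A_t\le$ something) and $D_t$ a non-increasing predictable part, and using $\widetilde W_u\le \max_{t\le u}(k+N_t)$, I would apply Doob's $L^2$ maximal inequality to $N$ to get $\E[\widetilde W_u^2]\le 2k^2 + 8\,\E[N_u^2] = 2k^2 + 8\,\E[A_u^{N}]$, and then bound $\E[A_u^N]$ by a constant times $\E[A_u]=\E[\text{stopped count}]\cdot(\text{per-step variance}\le B^2)$; combined with the first half this yields $\E[A_u]\le \frac{2k^2}{\sigma^2 u}\cdot(\text{const}) + (\text{const})\E[A_u]/u$, i.e. for $u$ large (the hypothesis $u>12B^2/\sigma^2$) one absorbs the second term and gets $\sigma^2 u\,\prob_k(\tau>u)\le \E[\widetilde W_u^2]\le C k^2$, hence $\prob_k(\tau>u)\le Ck^2/(\sigma^2 u)$. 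Taking square roots in the right place (the cleaner route: apply the above to $\sqrt u$ rather than $u$, or equivalently note $\prob_k(\tau>u)\le \prob_k(\tau>u)^{1/2}\cdot\text{(something)}$ via Markov on $\widetilde W_u$ directly, $\prob_k(\tau>u)\le \prob_k(\widetilde W_u>0)$ is too weak, so instead use $\sigma\sqrt u\,\mathbbm{1}_{\tau>u}\le \widetilde W_u+|N_{u\wedge\tau}-N_{0}|$ heuristics) gives the stated $4k/(\sigma\sqrt u)$.

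Let me restate the cleanest version of the endgame, since that is the delicate bookkeeping. The sharp statement needs a \emph{linear} (not quadratic) lower bound on the left, so instead of squaring I would use the estimate on $\{\tau>u\}$: there $\widetilde W_u = k + N_{u} + D_u$ with $D_u\le 0$, so either $N_u\le -k/2$ or $\widetilde W_u\ge k/2$... this still costs a square. The honest route to the $\sqrt u$ in the denominator is: from $\E[\widetilde W_u^2]\le Ck^2$ and the submartingale bound $\sigma^2 u\,\prob_k(\tau>u)\le \E[\widetilde W_u^2]$ we already get $\prob_k(\tau>u)\le Ck^2/(\sigma^2 u)$, which is $\le (4k/(\sigma\sqrt u))$ precisely when $\sqrt u \ge C k/(4\sigma)$ up to constants — but the lemma claims it for \emph{all} $u>12B^2/\sigma^2$, independent of $k$. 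So the quadratic bound is not enough and one genuinely needs a first-moment argument: bound $\E[\widetilde W_u]$, not $\E[\widetilde W_u^2]$. Since $\E[\widetilde W_u]\le k$ (supermartingale), and on $\{\tau>u\}$ one has... $\widetilde W_u$ could be small. The resolution, which I expect to be \textbf{the main obstacle and the heart of the proof}, is to combine the variance accumulation with a \emph{hitting-time} argument: show $\prob_k(\tau>u,\ \widetilde W_u\le \sigma\sqrt u/2)$ is small by a Chebyshev/Paley--Zygmund argument on the increments accumulated up to $\tau\wedge u$ (the walk that has run $\ge u/2$ steps with per-step variance $\ge\sigma^2$ has typical displacement $\gtrsim\sigma\sqrt u$, so it cannot stay near its start), getting $\prob_k(\tau>u,\ \widetilde W_u> \sigma\sqrt u/2)\ge \tfrac12\prob_k(\tau>u) - o(1)$; then Markov gives $k\ge \E[\widetilde W_u]\ge \tfrac{\sigma\sqrt u}{2}\cdot\tfrac12\prob_k(\tau>u)$, i.e. $\prob_k(\tau>u)\le 4k/(\sigma\sqrt u)$. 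The condition $u>12B^2/\sigma^2$ is exactly what makes the "walk spreads to scale $\sigma\sqrt u$" step valid uniformly (it guarantees the number of accumulated steps dominates the single-step scale $B$). I would carry this out by applying a one-sided Chebyshev (Cantelli) inequality to the martingale part $N$ stopped at $\tau\wedge u$, whose variance is $\E[A_{u\wedge\tau}]\in[\text{c}\sigma^2 u\,\prob(\tau>u),\,B^2 u]$, to conclude that conditionally on $\{\tau>u\}$ the value $|N_{u}|$ exceeds $\sigma\sqrt u/2$ with probability bounded below by an absolute constant, and then tracking the drift term $D$ (which only helps, being $\le 0$) to transfer this to $\widetilde W_u$.
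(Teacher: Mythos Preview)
Your central step has a sign error that invalidates the argument. You claim that $Y_t=\widetilde W_t^2-A_t$ is a \emph{sub}martingale because ``the cross term $2\widetilde W_t\,\E[\widetilde W_{t+1}-\widetilde W_t\mid\mathcal F_t]$ is $\le 0$''. But that cross term being $\le 0$ pushes the inequality the \emph{wrong} way: from
\[
\E[\widetilde W_{t+1}^2-\widetilde W_t^2\mid\mathcal F_t]
=2\widetilde W_t\,\E[\widetilde W_{t+1}-\widetilde W_t\mid\mathcal F_t]
+\E[(\widetilde W_{t+1}-\widetilde W_t)^2\mid\mathcal F_t],
\]
together with $(\E[\widetilde W_{t+1}\mid\mathcal F_t])^2\le\widetilde W_t^2$, you actually get $\E[\widetilde W_{t+1}^2-\widetilde W_t^2\mid\mathcal F_t]\le \Var(\widetilde W_{t+1}\mid\mathcal F_t)$. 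So $Y_t$ is a \emph{super}martingale, giving only $\E[\widetilde W_u^2]\le k^2+\E[A_u]$, which is useless for a lower bound on $\E[\widetilde W_u^2]$ and hence for your chain $\sigma^2 u\,\prob(\tau>u)\le\E[\widetilde W_u^2]$. Your later fallback via Cantelli on the martingale part $N$ \emph{conditioned} on $\{\tau>u\}$ is also not rigorous: after conditioning on that event $N$ is no longer a martingale, and the unconditional variance bounds you have for $N_{u\wedge\tau}$ do not transfer to conditional ones without exactly the kind of estimate you are trying to prove.

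The fix, and what the paper does, is to work not with $\widetilde W$ but with its \emph{martingale part} $M_t$ from the Doob decomposition $W_t=M_t-A_t$ (so $M_t\ge 0$, $M_0=k$, increments $\le 2B$, and crucially $\Var(M_{t+1}\mid\mathcal F_t)=\Var(W_{t+1}\mid\mathcal F_t)\ge\sigma^2$ on $\{\tau>t\}$). Introduce an auxiliary level $h>2B$ and the bounded stopping time $T_h:=\min\{t:M_t\ge h\}\wedge\tau\wedge u$. Optional stopping for $M$ gives $\prob(M_{T_h}\ge h)\le k/h$. For the other piece, the process $V_t:=M_t^2-hM_t-\sigma^2 t$ is a genuine submartingale when stopped at $\tau$ (here the martingale property of $M$ is what makes the sign come out right), and optional stopping at $T_h$ yields, after bounding $\E[M_{T_h}(M_{T_h}-h)]\le 2kh$ using $M_{T_h}\le h+2B<2h$, that $\E[T_h]\le 3kh/\sigma^2$ and hence $\prob(T_h\ge u)\le 3kh/(\sigma^2 u)$. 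Adding and optimising $h=\sigma\sqrt{u/3}$ gives $\prob(\tau>u)\le k/h+3kh/(\sigma^2 u)\le 4k/(\sigma\sqrt u)$; the hypothesis $u>12B^2/\sigma^2$ is exactly what guarantees this optimal $h$ exceeds $2B$. The conceptual point you were missing is the level-$h$ trick: it converts the second-moment growth of $M$ into a \emph{first}-moment bound linear in $k$, which your squaring approach cannot produce.
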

\begin{remark}
We apply this lemma when $\mathcal{F}_t$ is determined by $W_t$. In this case, from the definition of conditional variance, the third condition is satisfied when $\prob(W_{t+1}\neq W_t|\mathcal{F}_t)$ is uniformly bounded above on the event $\{\tau>t\}$, and $|W_{t+1}-W_{t}|$ is bounded away from zero on the event $\{W_{t+1}\neq W_{t}\}$. Precisely,
\begin{align*}
    \prob(W_{t+1}\neq W_t|\mathcal{F}_t)\cdot \min\{(W_{t+1}-W_t)^2:W_{t+1}\neq W_t\}\geq \sigma^2>0 \text{ on the event } \{\tau>t\}.
\end{align*}
\end{remark}
The proof can be found in \cite{MCMC}: however, we prove the Lemma for the self-containedness of the article.
\begin{proof}
    By Doob's decomposition, we can write $W_t=M_t-A_t$ with martingale $(M_t)$ with $M_0=W_0$ and non-decreasing previsible $(A_t)$ with $A_0=0$. Then, the second condition gives
    \[
    A_{t+1}-A_t=-\E[W_{t+1}-W_t|\mathcal{F}_t]\leq B,
    \]
    which implies that $M_{t+1}-M_t\leq 2B$. In addition, by the third condition, since $A_t$ is previsible, on the event $\{\tau>t\}$, we have
    \begin{align*}
    \Var[M_{t+1}|\mathcal{F}_t]
    &=\E[(M_{t+1}-\E[M_{t+1}|\mathcal{F}_t])^2]|\mathcal{F}_t]\\
    &=\E[(W_{t+1}-\E[W_{t+1}|\mathcal{F}_t])^2]|\mathcal{F}_t]
    =\Var[W_{t+1}|\mathcal{F}_t]\geq\sigma^2>0.
    \end{align*}
    Now, for arbitrary fixed $h>2B$, we define the stopping time $T_h:=\min\{t:M_t\geq h\}\wedge\tau\wedge u.$ Since $T_h$ is bounded by $u$, we can apply the optional stopping theorem to get
    \begin{align}\label{L0}
        k=\E[M_{T_h}]\geq h\prob(M_{T_h}\geq h). 
    \end{align}
    Now, we define a process by $V_t:=M_t^2-hM_t-\sigma^2 t$. Note that
    \begin{align*}
        \E[V_{t+1}-V_{t}|\mathcal{F}_t]
        =\E[(M_{t+1}-M_t)^2|\mathcal{F}_t]-\sigma^2\geq0,
    \end{align*}
    which means $V_{t\wedge \tau}$ is a submartingale. Additionally, using the optional stopping theorem, we obtain
    \[
    -kh\leq V_0\leq \E[V_{T_h}]=\E[M_{T_h}(M_{T_h}-h))]-\sigma^2 \E[T_h].
    \]
    Note that both $W_t$ and $A_t$ are non-negative, as well as $M_t$, which implies that $M_{T_h}(M_{T_h}-h)$ is positive iff $M_{T_h}> h$. Hence, combining the fact that the increment of $M_t$ is at most $2B$ and \eqref{L0}, we bound the first term of the right-hand side by
    \[
    \E[M_{T_h}(M_{T_h}-h))]\leq \prob(M_{T_h}> h)(h+2B)(2B)\leq \prob(M_{T_h}> h)\cdot 2h^2\leq 2kh,
    \]
    which concludes that $\E[T_h]\leq 3kh/\sigma^2$; thus, $\prob(T_h\geq u)\leq \frac{3kh}{\sigma^2 u}$. Therefore, combining again with \eqref{L0},
    \[
    \prob(\tau>u)\leq \prob(M_{T_h}\geq h)+\prob(T_h\geq u)\leq \frac{k}{h}+\frac{3kh}{\sigma^2 u}.
    \]
    Finally, we consider $h=\sqrt{\sigma^2 u/3}$ to obtain that
    \[
    \prob(\tau>u)\leq\frac{2\sqrt{3}k}{\sigma\sqrt{u}}<\frac{4k}{\sigma\sqrt{u}}.
    \]
    Note that we can choose $h>2B$ only when $u>12B^2/\sigma^2$.
\end{proof}

\section{Cutoff at High Temperature}\label{sec:high}
In this section, we prove the cutoff phenomenon in the high temperature regime, when $\beta < \beta_{cr}$. Recall from \eqref{const} that $\alpha= \frac{1}{2n(1-\rho_n)}=\frac{1}{2(1-\beta/\beta_{cr})}$, a constant independent of $n$, and define $t_n := \alpha n\log n$. Without loss of generality, we may assume $p_1\leq p_2\leq\cdots\leq p_m$.
\subsection{Upper Bound}
We may restate the upper bound part of Theorem \ref{Thm:HighTemp}.
\begin{theorem}\label{thm:cutoffUpper}
When $\beta < \beta_{cr}$, then
\[
\lim_{\gamma\rightarrow\infty}\lim_{n\rightarrow\infty} d_n(t_n+\gamma n) =0.
\]
\end{theorem}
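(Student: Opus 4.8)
The plan is to bound $d_n(t_n+\gamma n)$ by the coupling inequality $d_n(t)\le\sup_{\sigma}\prob(\sigma_t\neq\sigma_t^{\mu})$, the supremum being over starting states $\sigma$ and over couplings of the chain from $\sigma$ with a chain $(\sigma_t^{\mu})$ started from $\mu_n$, and to build a three-stage coupling of total length $t_n+O(\gamma n)$. \emph{Stage 1 (steps $0$ to $t_n$): contraction.} Run the two chains under the monotone coupling of Subsection~\ref{subsec:monotone coupling}. Since $1-\rho_n=1/(2\alpha n)$ by \eqref{const}, one has $\rho_n^{t_n}=\Theta(n^{-1/2})$; combining this with the contraction of Lemma~\ref{DistLem}, the $\ell^2$ bound of Lemma~\ref{lemma:Sym}, and the variance and expectation estimates of Propositions~\ref{prop:VariBound} and~\ref{Prop:expected}, one gets that, uniformly in $\sigma$, with probability $\ge1-\epsilon(\gamma)$ (with $\epsilon(\gamma)\to0$) the time-$t_n$ configurations satisfy $\|\bS_{t_n}\|_1=O(n^{-1/2}\sqrt\gamma)$ and $\|\bS^{\mu}_{t_n}\|_1=O(n^{-1/2}\sqrt\gamma)$, and the Hamming distance between them is $O(\sqrt{\gamma n})$. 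In words: both block magnetizations have entered the $O(n^{-1/2})$ ``diffusive window'' around the origin and differ by only $O(\sqrt{\gamma n})$ sites.

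\emph{Stage 2 (steps $t_n$ to $t_n+O(\gamma^{2}n)$): coalescing the magnetizations.} Here monotonicity must be abandoned — under the monotone coupling the magnetization gap would shrink at the ``Feller'' rate and take a further $\Theta(n\log n)$ steps to vanish. Instead, once both magnetizations lie in the $O(n^{-1/2})$ window, where by Remark~\ref{Rmk:UniformlyBdd} every block is still updated with probability bounded away from $0$ and $1$, couple the two magnetization dynamics so that their one-step increments are (essentially) independent. Then $\bm M_t-\bm M_t'$ performs a random walk with $O(1)$-scale increments and a weak inward drift, and the scalar $W_t:=\sum_i a_i|M_t^{(i)}-M_t'^{(i)}|$ is, up to $O(n^{-2})$ corrections that are readily absorbed, a non-negative supermartingale: linearizing $\tanh$ (legitimate in the window), the drift of $\bm M_t-\bm M_t'$ is $\tfrac1n(\beta\mathbf B-\mathbf I)(\bm M_t-\bm M_t')+O(n^{-2})$, and the $\ba$-weighted $\ell^1$ norm contracts at rate $\tfrac1{2\alpha n}$ because $\ba^{\top}\mathbf B=\rho_{\mathbf B}\ba^{\top}$ with $\beta\rho_{\mathbf B}=\beta/\beta_{cr}<1$. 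Moreover $W_t$ has increments of size $O(1)$ and, precisely because the two increments are independent and each is nondegenerate, one-step conditional variance bounded below by a positive constant on $\{W_t>0\}$. Applying Lemma~\ref{Lem:SupMar} with $k=W_{t_n}=O(\sqrt{\gamma n})$, $B=O(1)$, $\sigma^2=\Theta(1)$ and $u=\Theta(\gamma^{2}n)$ shows $\bm M_t=\bm M_t'$ occurs within $O(\gamma^{2}n)$ further steps with probability $\ge1-O(\gamma^{-1/2})$.

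\emph{Stage 3: full agreement.} Once the magnetizations coincide, switch to the coupling of Lemma~\ref{Lem:CoupleSameMag}, which keeps the magnetizations locked; in the high-temperature regime $\beta<\beta_{cr}$ only $O(n)$ further steps (rather than the $O(n\log n)$ of Lemma~\ref{Lem:CoupleSameMag}) are needed for the configurations to agree — as will be established in this section — and this happens with probability $\ge1-\epsilon$. Summing the three failure probabilities $\epsilon(\gamma)+O(\gamma^{-1/2})+\epsilon$, which tends to $0$ after $n\to\infty$ and then $\gamma\to\infty$, and renaming the window constant, yields $\lim_{\gamma\to\infty}\lim_{n\to\infty}d_n(t_n+\gamma n)=0$.

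The main obstacle is Stage~2: one must produce a coupling that simultaneously (i) keeps both marginals exactly Glauber, (ii) makes $W_t$ a genuine non-negative supermartingale — which forces one to work in the Perron eigen-direction $\ba$ of $\mathbf B$ and to keep the $\tanh$-linearization error under control, hence to verify that \emph{both} chains remain inside the $O(n^{-1/2})$ window throughout the $\Theta(\gamma^{2}n)$ steps of Stage~2 — and (iii) has an $n$-independent lower bound on the one-step conditional variance of $W_t$ even when $W_t$ corresponds to a single disagreeing site. Balancing these three requirements, leaning on the uniform drift and variance bounds of Section~\ref{sec:Pre} together with the matrix identities linking $\mathbf B$, $\ba$, $\rho_n$ and $\beta_{cr}$, is where the real work lies; the later sections of this section also supply the $O(n)$-step refinement of Lemma~\ref{Lem:CoupleSameMag} used in Stage~3.
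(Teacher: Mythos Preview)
Your three-stage outline matches the paper's architecture, and Stage~1 is essentially correct. The serious gap is Stage~3. The coupling of Lemma~\ref{Lem:CoupleSameMag} gives only $\E[d_t\mid d_0]\le d_0(1-c/n)^t$; even with $d_0=O(\sqrt n)$ this needs $t\gtrsim n\log n$, not $O(n)$, to make $\prob(d_t\ge1)$ small. The obstruction is structural: in that coupling $d_t$ is monotone nonincreasing, so its one-step conditional variance is $\Theta(d_t/n)$ and vanishes as $d_t\to0$, which blocks Lemma~\ref{Lem:SupMar}. The paper's Stage~3 is completely different: it first restricts to ``good'' starting states $\Omega_0$ via \eqref{GoodDistance}, then reduces by symmetry (Lemma~\ref{TV}) to the $2m$-coordinate chain $(U_t^{(i)},V_t^{(i)})$ relative to a fixed $\bar\sigma\in\Omega_0$, and constructs a coupling (Lemma~\ref{lem:2mCoup}) under which the discrepancy $R_t^{(i)}=U_t'^{(i)}-U_t^{(i)}$ moves \emph{both up and down} with probability bounded below on the event $\{\sigma_t,\sigma_t'\in\mathcal A\}$. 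That two-sidedness is precisely what yields $\Var(R_{t+1}^{(i)}\mid\mathcal F_t)\ge c>0$ uniformly and makes Lemma~\ref{Lem:SupMar} applicable with $k=O(\sqrt n)$, giving $O(\gamma n)$ steps. The good-set reduction and the control of $\prob(\mathcal A^c)$ via Proposition~\ref{Prop:expected} are essential and absent from your plan.

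Stage~2 is closer but your route is harder than necessary. You linearize $\tanh$ and therefore must keep \emph{both} chains in the $O(n^{-1/2})$ window for $\Theta(\gamma^2 n)$ steps---an extra estimate you do not supply---and under genuinely independent increments the supermartingale property of $\sum_i a_i|M_t^{(i)}-M_t'^{(i)}|$ is delicate because the signs of the components can flip. The paper avoids both issues: its $Y_t=\tfrac n2\|\ba\circ\bS_t-\ba\circ\bS_t'\|_1$ contracts \emph{globally} by Proposition~\ref{VariProp} (no window, no linearization), and for the variance lower bound it uses a hybrid coupling---modified monotone on blocks with $d_{i,t}\le1$, independent on blocks with $d_{i,t}>1$---so that the sign of each $S_t^{(i)}-S_t'^{(i)}$ is preserved while the nonempty ``far'' set $J_t$ supplies uniform variance on $\{t<\tau_0\}$.
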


To show the upper bound, we first construct a coupling of the dynamics so that the magnetizations agree after $t_n+O(n)$ steps.

\begin{lemma}\label{lem:MagCoup}
For any two configurations $\sigma,\sigma'$, there is a coupling $(\sigma_t,\sigma_t')$ with starting states $(\sigma,\sigma')$ such that, if $\tau_{\text{mag}}:=\min\{t\geq 0:\bS_t=\bS_t'\}$,
\[
\prob_{\sigma,\sigma'}(\tau_{\text{mag}}>t_n+\gamma n)\leq \frac{C}{\sqrt{\gamma}},
\]
holds for sufficiently large $\gamma n$, where $C$ is a constant that is independent of $\sigma,\sigma'$ and $n$.
\end{lemma}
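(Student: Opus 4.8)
The plan is to produce the coupling in two stages: a \emph{contraction stage} of length $t_n$, run with the monotone coupling, that drives the two block‑magnetization vectors to within $\ell^1$‑distance $O(\sqrt n)$ of each other, followed by a \emph{closing stage} of length $\gamma n$, run with a more carefully engineered coupling, during which a scalar potential of the magnetization difference is a non‑negative supermartingale to which Lemma \ref{Lem:SupMar} applies, forcing the magnetizations to coincide. The final bound then follows by a union bound over the (small) failure probabilities of the two stages.

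For the contraction stage, since $\sigma,\sigma'$ need not be ordered, I would run the monotone (grand) coupling and sandwich both chains between the ones started from $\sigma\vee\sigma'$ and $\sigma\wedge\sigma'$, so that by preservation of monotonicity $\textnormal{dist}(\sigma_t^{(i)},\sigma_t'^{(i)})\le\textnormal{dist}((\sigma\vee\sigma')_t^{(i)},(\sigma\wedge\sigma')_t^{(i)})$ for every $i$ and $t$. Lemma \ref{DistLem} applied to the ordered pair $(\sigma\vee\sigma',\sigma\wedge\sigma')$ gives $\E\,\textnormal{dist}((\sigma\vee\sigma')_{t_n}^{(i)},(\sigma\wedge\sigma')_{t_n}^{(i)})\le n\sqrt{p_i}\,\rho_n^{t_n}$, and since $\rho_n=1-\tfrac1{2n\alpha}$ by \eqref{const}, we have $\rho_n^{t_n}=(1-\tfrac1{2n\alpha})^{\alpha n\log n}$ of order $n^{-1/2}$; hence $\E_{\sigma,\sigma'}\|\bm{M}(\sigma_{t_n})-\bm{M}(\sigma_{t_n}')\|_1=O(\sqrt n)$. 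By Markov's inequality, $\prob_{\sigma,\sigma'}\big(\|\bm{M}(\sigma_{t_n})-\bm{M}(\sigma_{t_n}')\|_1>\sqrt{\gamma n}\,\big)\le C_0/\sqrt\gamma$.

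For the closing stage, from time $t_n$ on I would couple the two magnetization chains so that $W_t:=\sum_{i=1}^m a_i\,\bigl|M^{(i)}(\sigma_t)-M^{(i)}(\sigma_t')\bigr|$ (with $\ba$ the Perron eigenvector from \eqref{const}), possibly corrected by a term of size $O(t/n)$, is a non‑negative supermartingale with $|W_{t+1}-W_t|\le 2a_{\max}$ and with $\Var(W_{t+1}\mid\mathcal{F}_t)$ bounded below by a positive constant on $\{\tau_{\text{mag}}>t\}$. The supermartingale property is precisely the drift contraction: the per‑step drift of $M^{(i)}(\sigma_t)-M^{(i)}(\sigma_t')$ is coupling‑independent and, by the mean‑value expansions \eqref{eq:drift}--\eqref{control_r}, equals $-(S_t^{(i)}-S_t'^{(i)})+p_i\bigl(\tanh\beta X_t^{(i)}-\tanh\beta X_t'^{(i)}\bigr)+O(1/n)$; coupling the blocks monotonically keeps the sign of each $M^{(i)}(\sigma_t)-M^{(i)}(\sigma_t')$ fixed, so summing against $\ba$, using that $\tanh$ is $1$‑Lipschitz, and invoking $\ba^{\top}\mathbf{B}=\beta_{cr}^{-1}\ba^{\top}$ together with $\beta<\beta_{cr}$ (Proposition \ref{prop:eigen}) yields $\E[W_{t+1}-W_t\mid\mathcal{F}_t]\le-(1-\beta/\beta_{cr})\tfrac1n W_t+O(1/n)$, which is a genuine (corrected) supermartingale. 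The variance lower bound is supplied by updating the blocks that are still far from agreement in an anti‑correlated fashion, so that whenever such a block is selected its magnetization moves by $\pm 2$ independently in the two chains; the uniform lower bounds on $r_\pm$ from Remark \ref{Rmk:UniformlyBdd} make this happen with probability $\Omega(1)$, giving $\Var(W_{t+1}\mid\mathcal{F}_t)=\Omega(1)$ on $\{\tau_{\text{mag}}>t\}$. Applying Lemma \ref{Lem:SupMar} from time $t_n$ with $k=W_{t_n}$, $B=O(1)$, $\sigma^2$ a positive constant, and horizon $u=\gamma n$, on the event $\{W_{t_n}\le a_{\max}\sqrt{\gamma n}\}$ one gets $\prob(\tau_{\text{mag}}>t_n+\gamma n\mid\mathcal{F}_{t_n})\le \tfrac{4W_{t_n}}{\sigma\sqrt{\gamma n}}=O(1/\sqrt\gamma)$, and combining with the contraction‑stage bound via a union bound produces the claimed $C/\sqrt\gamma$.

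The main obstacle is the construction of the closing‑stage coupling: its two required properties pull against each other. The monotone coupling contracts $W_t$ optimally but moves it only with probability $O(W_t/n)$, so its conditional variance is $O(1/n)$ and the resulting Lemma \ref{Lem:SupMar} estimate is too weak (by a factor $\sqrt n$) over a window of length $\gamma n$; conversely, any coupling with $\Omega(1)$ variance of $W_t$ must let the two chains disagree in their moves, which threatens the supermartingale property — especially through the $O(1/n)$ drift corrections on the blocks where $|M^{(i)}(\sigma_t)-M^{(i)}(\sigma_t')|$ has already dropped to $O(1)$. Reconciling these requires a hybrid coupling — monotone on the almost‑agreed blocks, anti‑correlated on the far blocks — carried through the multi‑block bookkeeping (distinct update rates $p_i$, distinct mean‑fields $X_t^{(i)}$, and the coupling of fields across blocks via $\mathbf{B}$), together with the small $O(t/n)$ correction and the splitting of the closing stage into a ``far'' sub‑phase and a ``$W_t=O(1)$'' sub‑phase so that Lemma \ref{Lem:SupMar} can be applied with the right parameters in each. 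This is the delicate part of the argument.
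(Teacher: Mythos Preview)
Your proposal is essentially the paper's proof: contract for $t_n$ steps with the monotone coupling to bring $Y_t:=\tfrac{n}{2}\sum_i a_i|S_t^{(i)}-S_t'^{(i)}|$ down to $O(\sqrt n)$ in expectation, then switch to a hybrid coupling (modified monotone on the blocks with $d_{i,t}\le 1$, \emph{independent} on the remaining blocks) so that $Y_t$ is a non‑negative supermartingale with $\Omega(1)$ conditional variance, and apply Lemma~\ref{Lem:SupMar}. The paper avoids your $O(t/n)$ correction by noting that on the far blocks $d_{i,t}\ge 2$ so independent updates cannot flip the sign of $S_t^{(i)}-S_t'^{(i)}$ in one step; with all signs fixed, $Y_t$ is linear in the signed magnetization differences, whose drift is coupling‑independent and contracts exactly by $\rho_n$, and the final passage from $\tau_0:=\min\{t:\max_i d_{i,t}\le 1\}$ to $\tau_{\text{mag}}$ is handled by a short additional run of the modified monotone coupling.
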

Here, we use \emph{``modified" monotone coupling}. To illustrate, we first define \emph{modified matching}. For $\sigma, \sigma' \in \Omega$ and their magnetization $\bs, \bs'\in\mathcal{S}$, we suppose $s^{(i)}\geq s'^{(i)}$ for some $1\leq i\leq m$, which means that the number of positive spin in $s^{(i)}$ is more than or the same as that of in $s'^{(i)}$. We then match each positive spin in $s^{(i)}$ with a positive spin in $s'^{(i)}$ and match the remaining spins arbitrarily. In this manner, we can construct a bijection $f_{\sigma,\sigma'}:V\rightarrow V'$, which is known as modified matching. We update the matched vertices together in the two chains by modified monotone coupling.

\begin{proof}
Our strategy is as follows: initially, we run the monotone coupling $(\sigma_t,\sigma'_t)$ with starting states $(\sigma,\sigma')$ until time $t_n$. We then define $Y_t:=\frac{n}{2}\Vert \ba\circ\bS_t - \ba\circ\bS'_t \Vert_1$; hence, we have 
\begin{equation}\label{4.1}
    \begin{split}
        \E_{\sigma,\sigma'}[Y_{t_n}]\leq \frac{n}{2} \rho_n^{\alpha n\log{n}}\sum_{i=1}^m(2a_i p_i)=c n\left(1-\frac{1}{2n\alpha}\right)^{\alpha n\log{n}}
\leq c n e^{-\frac{\log{n}}{2}}= c\sqrt{n},
    \end{split}
\end{equation}
by Proposition \ref{VariProp}. Recall $d_{i,t}:=\textnormal{dist}(\sigma_t^{(i)},{\sigma'}_t^{(i)})=\frac{n}{2}|S_t^{(i)}-{S'}_t^{(i)}|$ and let us define the stopping time 
\[
\tau_0 := \min\{t\geq0: \max_{1\leq i\leq m}d_{i,t}\leq 1\}. 
\]
After time $t_n$, we will construct a coupling such that $(Y_t)_{t_n\leq t<\tau_0}$ with the stopping time $\tau_0$ that satisfies the condition of the Lemma \ref{Lem:SupMar}. After time $\tau_0$, we run the modified monotone coupling.

Then, let us consider the time $t\in[t_n,\tau_0)$. We partition the sites by $L_t:=\cup_{i:d_{i,t}\leq 1}G_i$ and $J_t:=\cup_{i:d_{i,t}> 1}G_i$. Note that $J_t\neq\emptyset$ for $t<\tau_0$. We choose a site uniformly over $V=L_t\dot\cup J_t$, where $\dot\cup$ indicates a disjoint union. If a site is chosen in the set $L_t$, we run the modified monotone coupling, otherwise we run two chains independently. This is a coupling of the Glauber dynamics. Note that once $G_i$ becomes a subset of $L_t$, it remains a subset of $L$. In addition, the sign of each $S_t^{(i)}-{S'}_t^{(i)}$ does not change if $t<\tau_0$.

Clearly, $Y_t$ has bounded increments, and from Proposition \ref{VariProp}, because $\beta<\beta_{cr}$ implies $\rho_n<1$, we have $(Y_t)_{t_n\leq t<\tau_0}$ as non-negative supermartingale. In addition, on $t\in[t_n,\tau_0)$, $J_t\neq\emptyset$; this implies $|J_t|\geq np_1$. If $I$, the uniform random variable over $V$, is chosen in the set $J_t$, $\sigma_{t+1}(I)$ and $\sigma_{t+1}'(I)$ are chosen independently. Hence, the conditional probability $\prob(Y_{t+1}\neq Y_t|\mathcal{F}_t)$ is bounded away from zero uniformly. Therefore, by Lemma \ref{Lem:SupMar}, for sufficiently large $\gamma n$,
\[
\prob_{\sigma,\sigma'}(\tau_0>t_n +\gamma n|\sigma_{t_n},\sigma_{t_n}')\leq \tilde{c}\frac{Y_{t_n}}{\sqrt{\gamma n}},
\]
for some positive constant $\tilde{c}$ not depending on $n$. Taking expectation coupled with (\ref{4.1}), we obtain
\[
\prob_{\sigma,\sigma'}(\tau_0>t_n +\gamma n)\leq O(\gamma^{-1/2}).
\]
Since $d_{i,\tau_0}$ is at most one for all $i$, in this degree $Y_{\tau_0}=\sum_{i=1}^{m}{a_i d_{i,\tau_0}}\leq \sum_{i=1}^{m}{a_i}=1$. As previously mentioned, from time $\tau_0$ onward, we run the modified monotone coupling to apply the modified version of Proposition \ref{VariProp} to obtain
\begin{align*}
    \prob_{\sigma,\sigma'}(\tau_{\text{mag}}>\tau_0+\gamma'n|\sigma_{\tau_0},\sigma'_{\tau_0})
    &\leq\prob_{\sigma,\sigma'}\left(Y_{\tau_0+\gamma'n}\geq a_{\text{min}}|\sigma_{\tau_0},\sigma'_{\tau_0}\right)\\
    &\leq \E_{\sigma,\sigma'}\left[Y_{\tau_0+\gamma'n}|\sigma_{\tau_0},\sigma'_{\tau_0}\right]/a_{\text{min}}\\
    &\leq \rho_n^{\gamma'n}\cdot a_\text{min}^{-1}
    = \left(1-\frac{1}{2n\alpha}\right)^{\gamma' n}\cdot a_\text{min}^{-1}
    \leq  e^{-\gamma'/(2\alpha )}\cdot a_\text{min}^{-1}.
\end{align*}
Accordingly,
\[
\prob_{\sigma,\sigma'}(\tau_{\text{mag}}>t_n+\gamma n+\gamma'n)\leq O(\gamma^{-1/2})+e^{-\gamma'/(2\alpha )}\cdot a_\text{min}^{-1}.
\]
We conclude that for sufficiently large $\gamma$,
\[
\prob_{\sigma,\sigma'}(\tau_{\text{mag}}>t_n+\gamma n)\leq O(\gamma^{-1/2}).
\]
\end{proof}

We will run the Glauber dynamics starting from the set of ``$\emph{good}$" configurations to establish cutoff. To begin with, we introduce a lemma from \cite[Lemma 3.3]{Curie-Ising-classic}.
\begin{lemma}
For any subset $\Omega_0\subseteq\Omega$ and stationary distribution $\mu$,
\begin{align*}
    d_n(t_0+t)&=\max_{\sigma\in\Omega}\Vert\prob_\sigma(\sigma_{t_0+t}\in\cdot)-\mu\Vert_{\textnormal{TV}}\\
    &\leq\max_{\sigma_0\in\Omega_0}\Vert\prob_{\sigma_0}(\sigma_{t}\in\cdot)-\mu\Vert_{\textnormal{TV}}+\max_{\sigma\in\Omega}\prob_\sigma(\sigma_{t_0}\notin\Omega_0).
\end{align*}
\end{lemma}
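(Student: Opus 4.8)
The plan is to combine the Markov (semigroup) property of the Glauber dynamics with the stationarity of $\mu$ and the convexity of the total-variation norm. The first displayed equality is nothing but the definition of $d_n$, so only the inequality needs an argument. Fix an arbitrary $\sigma\in\Omega$. By the Markov property applied at time $t_0$, the law of $\sigma_{t_0+t}$ started from $\sigma$ is the mixture of the laws at time $t$ over the possible values of $\sigma_{t_0}$; that is,
\[
\prob_\sigma(\sigma_{t_0+t}\in\cdot)=\E_\sigma\big[\prob_{\sigma_{t_0}}(\sigma_t\in\cdot)\big].
\]

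Next I would use that $\mu$ is the stationary measure, so $\E_\sigma[\mu]=\mu$, and write $\prob_\sigma(\sigma_{t_0+t}\in\cdot)-\mu=\E_\sigma\big[\prob_{\sigma_{t_0}}(\sigma_t\in\cdot)-\mu\big]$. Applying the triangle inequality for the total-variation norm inside the expectation (equivalently, Jensen's inequality for the convex functional $\|\cdot\|_{\textnormal{TV}}$) gives
\[
\big\|\prob_\sigma(\sigma_{t_0+t}\in\cdot)-\mu\big\|_{\textnormal{TV}}\leq \E_\sigma\big[\,\big\|\prob_{\sigma_{t_0}}(\sigma_t\in\cdot)-\mu\big\|_{\textnormal{TV}}\,\big].
\]
Then I would split this expectation according to whether $\sigma_{t_0}\in\Omega_0$ or not: on the event $\{\sigma_{t_0}\in\Omega_0\}$ the integrand is at most $\max_{\sigma_0\in\Omega_0}\|\prob_{\sigma_0}(\sigma_t\in\cdot)-\mu\|_{\textnormal{TV}}$, while on the complement we bound the integrand by $1$, contributing at most $\prob_\sigma(\sigma_{t_0}\notin\Omega_0)$. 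Taking the maximum over $\sigma\in\Omega$ on both sides yields the stated bound.

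There is no substantive obstacle here; the argument is a routine manipulation. The only points one must be careful about are that the mixture identity genuinely needs $\mu$ to be invariant (so that the subtracted constant integrates back to $\mu$), and that the two maxima appearing on the right-hand side are taken over the appropriate sets — $\Omega_0$ in the first term and all of $\Omega$ in the second — rather than both over $\Omega$.
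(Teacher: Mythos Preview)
Your argument is correct and is exactly the standard proof of this inequality. Note that the paper does not actually prove this lemma; it simply quotes it from \cite[Lemma~3.3]{Curie-Ising-classic}, so there is no ``paper's own proof'' to compare against --- your Markov-property plus convexity-of-TV argument is precisely what underlies the cited result.
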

We will apply the aforementioned lemma to the set of ``$\emph{good}$" configurations that spins are assigned evenly and precisely defined by
\[
\Omega_0:=\{\sigma\in\Omega: |S^{(i)}(\sigma)|\leq p_i/2,\quad i=1,2,\cdots,m\}.
\]

For a configuration $\sigma\in\Omega$, we define the number of positive and negative spins in each set $G_i$ by
\[
u_\sigma^{(i)}:=|\{v\in G_i:\sigma^{(i)}(v)=1\}|,\quad v_\sigma^{(i)}:=|\{v\in G_i:\sigma^{(i)}(v)=-1\}|, 
\]
and let
\[
\mathcal{C}:=\{(u^{(1)},v^{(1)},\cdots,u^{(m)},v^{(m)}): ~\forall 1\leq i\leq m,~~~ u^{(i)}\wedge v^{(i)} \geq |G_i|/4\}.
\]
Note that $\sigma\in\Omega_0 \Longleftrightarrow (u_\sigma^{(1)},v_\sigma^{(1)},\cdots,u_\sigma^{(m)},v_\sigma^{(m)})\in\mathcal{C}$. By Proposition \ref{Prop:expected} (1), there exist $\zeta>0$ such that for all $\sigma\in\Omega$, $\max_{1\leq i\leq m}|\E_\sigma S_{\zeta n}^{(i)}|\leq p_1/4$. Therefore, by Proposition \ref{prop:VariBound} along with Chebyshev's inequality,
\begin{align*}
    \prob_{\sigma}(\sigma_{\zeta n}\notin\Omega_0)\leq\sum_{i=1}^m\prob_\sigma(|S_{\zeta n}^{(i)}|>p_i/2)
    &\leq\sum_{i=1}^m\prob_\sigma(|S_{\zeta n}^{(i)}-\E_\sigma S_{\zeta n}^{(i)}|>p_i/4)\\
    &\leq\frac{16}{p_1^2}\sum_{i=1}^m\Var_\sigma S_{\zeta n}^{(i)}=O(n^{-1}),
\end{align*}
to obtain
\begin{align}\label{GoodDistance}
    d_n(\zeta n + t)\leq \max_{\sigma_0\in\Omega_0}\Vert\prob_{\sigma_0}(\sigma_{t}\in\cdot)-\mu\Vert_{\textnormal{TV}}+O(n^{-1}).
\end{align}

\begin{definition} (2$m$-coordinate chain)
We fix a configuration $\bar{\sigma}\in\Omega_0$. For each $\sigma\in\Omega$ and $1\leq i\leq m$, we define
\begin{align*}
    U_i(\sigma)&:=|\{v\in G_i: \sigma^{(i)}(v)=\bar{\sigma}^{(i)}(v)=1\}|,\\
    V_i(\sigma)&:=|\{v\in G_i: \sigma^{(i)}(v)=\bar{\sigma}^{(i)}(v)=-1\}|.
\end{align*}
For the Glauber dynamics $(\sigma_t)_{t\geq 0}$ with starting configuration $\sigma_0\in\Omega$, we define the $2m$-coordinate process by
\begin{align}\label{eq:2m-coordinate}
{\bm{W}}_t:=(U_t^{(1)},V_t^{(1)},\cdots,U_t^{(m)},V_t^{(m)}):=(U_1(\sigma_t),V_1(\sigma_t),\cdots,U_m(\sigma_t),V_m(\sigma_t)).
\end{align}
\end{definition}
This $2m$-coordinate chain is a Markov chain in its state space $\mathcal{W}\subset \mathbb{N}^{2m}$ with transition probability depending on the fixed configuration $\bar{\sigma}$. We denote its stationary measure by $\nu$. We can also define the magnetization chain $(S_t^{(1)},\cdots,S_t^{(m)})$ by
\[
S_t^{(i)}:=\frac{\sum_{v\in G_i}\sigma_t^{(i)}(v)}{n}=\frac{2(U_t^{(i)}-V_t^{(i)})}{n}-\frac{\bar{u}^{(i)}-\bar{v}^{(i)}}{n}.
\]
According to symmetry, we borrow the following lemma from \cite[Lemma 4.4]{Hee}.
\begin{lemma}\label{TV}
When $(\sigma_t)_{t\geq 0}$ is the Glauber dynamics with starting configuration $\sigma$, and ${\bm{W}}_t$ is the corresponding $2m$-coordinate chain defined by \eqref{eq:2m-coordinate} starting at ${\bm{w}}$, then
\[
\Vert \prob_{\sigma}(\sigma_t\in\cdot)-\mu \Vert_{\textnormal{TV}}=\Vert \prob_{{\bm{w}}}((U_t^{(1)},V_t^{(1)},\cdots,U_t^{(m)},V_t^{(m)})\in\cdot)-\nu \Vert_{\textnormal{TV}}.
\]
\end{lemma}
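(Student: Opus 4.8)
The plan is to realize the $2m$-coordinate chain as a symmetry reduction (a lumping) of the configuration chain. Introduce the group $\Gamma=\Gamma_{\bar\sigma}$ of permutations of $V$ that preserve each block $G_i$ and fix the reference configuration $\bar\sigma$; concretely, within $G_i$ one may permute the sites carrying $\bar\sigma=+1$ among themselves and, independently, the sites carrying $\bar\sigma=-1$ among themselves. A direct check shows that $\bm W=(U_1,V_1,\dots,U_m,V_m)$ is exactly the orbit map of this action: $\bm W(\sigma)=\bm W(\sigma')$ if and only if $\sigma'=\gamma\sigma$ for some $\gamma\in\Gamma$, and each fiber $\bm W^{-1}(\bm w)$ has cardinality $\prod_i\binom{\bar u^{(i)}}{U^{(i)}}\binom{\bar v^{(i)}}{V^{(i)}}$. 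I would then record two facts. (i) The Gibbs measure $\mu$ is $\Gamma$-invariant: by \eqref{eq:Gibbs2} its weight depends on $\sigma$ only through $\bm M(\sigma)$, and $M^{(i)}=2U^{(i)}-2V^{(i)}-\bar u^{(i)}+\bar v^{(i)}$ is a function of $\bm W$ alone; hence $\mu$ restricted to any fiber is uniform and $\nu=\bm W_\ast\mu$. (ii) The Glauber kernel $P$ is $\Gamma$-equivariant, $P(\gamma\rho,\gamma\rho')=P(\rho,\rho')$ for $\gamma\in\Gamma$, since one step resamples the spin of a uniformly chosen site $v\in G_i$ using $r_\pm$ of the quantity $X^{(i)}$, a function of the block magnetizations, and every $\gamma\in\Gamma$ maps each block bijectively to itself.

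Combining (ii) with the elementary remark that two \emph{distinct} configurations lying in a common fiber differ at Hamming distance at least two, whereas one Glauber step flips at most one coordinate, shows that $P$ never moves between two distinct configurations of a common fiber. Therefore $\bm W(\sigma_t)$ is genuinely Markov, it agrees in law with the $2m$-coordinate chain started from $\bm w=\bm W(\sigma_0)$, and $\prob_{\bm w}(\bm W_t\in\cdot)=\bm W_\ast\big(\prob_\sigma(\sigma_t\in\cdot)\big)$. In particular the inequality $\Vert\prob_{\bm w}(\bm W_t\in\cdot)-\nu\Vert_{\mathrm{TV}}\le\Vert\prob_\sigma(\sigma_t\in\cdot)-\mu\Vert_{\mathrm{TV}}$ is immediate, since $\bm W$ is deterministic and pushes $\mu$ forward to $\nu$; the whole content of the lemma is the reverse inequality.

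For the reverse inequality I would use the $\Gamma$-symmetry to reduce to fiber-uniform measures: because $\mu$, the set $\Omega_0$, and the kernel $P$ are all $\Gamma$-invariant (resp. equivariant), one may replace the law of $\sigma_t$ relevant for the distance to $\mu$ by its average over the $\Gamma$-orbit of the initial configuration — this changes neither the $\bm W$-pushforward (it only depends on $\bm w$, by lumpability) nor the comparison to $\mu$ — and that average is uniform on each fiber. For any two probability measures that are both uniform on the fibers of $\bm W$ one has $\Vert\pi-\mu\Vert_{\mathrm{TV}}=\Vert\bm W_\ast\pi-\bm W_\ast\mu\Vert_{\mathrm{TV}}$ by a one-line computation, grouping the $\ell^1$-sum fiber by fiber. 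Chaining the identities gives $\Vert\prob_\sigma(\sigma_t\in\cdot)-\mu\Vert_{\mathrm{TV}}=\Vert\prob_{\bm w}(\bm W_t\in\cdot)-\nu\Vert_{\mathrm{TV}}$. The delicate point — the one I expect to be the main obstacle, and the one to write out with care — is precisely the justification that symmetrizing the start over its fiber is harmless for the total-variation distance to equilibrium; this is exactly where the joint $\Gamma$-invariance of $\mu$ and $\Gamma$-equivariance of the Glauber update are indispensable, and it is the content of \cite[Lemma 4.4]{Hee}, whose argument transfers without change to the general interaction matrix $\mathbf K$ because those two structural inputs persist verbatim.
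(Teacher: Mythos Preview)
The paper gives no proof here---it simply invokes \cite[Lemma~4.4]{Hee}---and your sketch is the natural unpacking of that symmetry argument; the structural inputs you isolate ($\Gamma$-invariance of $\mu$ and $\Gamma$-equivariance of the Glauber kernel) are correct, and lumpability together with the easy inequality are fine. The gap lies exactly at the step you flag as ``delicate'': the claim that symmetrizing the initial configuration over its $\Gamma$-orbit does not change the total-variation distance to $\mu$ is unjustified, and in fact fails for general $\sigma$. Convexity gives only $\big\Vert\tfrac{1}{|\Gamma\sigma|}\sum_{\sigma'\in\Gamma\sigma}\prob_{\sigma'}(\sigma_t\in\cdot)-\mu\big\Vert_{\mathrm{TV}}\le\Vert\prob_\sigma(\sigma_t\in\cdot)-\mu\Vert_{\mathrm{TV}}$, and strict inequality is typical, because $\prob_\sigma(\sigma_t\in\cdot)$ need not be constant on $\Gamma$-orbits when $\sigma\neq\bar\sigma$. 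The failure is visible already at $t=0$: whenever the fiber $\bm W^{-1}(\bm w)$ contains more than one configuration, $\Vert\delta_\sigma-\mu\Vert_{\mathrm{TV}}=1-\mu(\sigma)$ strictly exceeds $\Vert\delta_{\bm w}-\nu\Vert_{\mathrm{TV}}=1-\mu\big(\bm W^{-1}(\bm w)\big)$. So the identity in the lemma cannot hold for arbitrary $\sigma$, and no argument will establish it at that generality.

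What does go through---and is all the paper ever uses, since in the proof of Theorem~\ref{thm:cutoffUpper} one takes $\bar\sigma$ equal to the starting configuration---is the case $\sigma=\bar\sigma$. Then $\Gamma=\mathrm{Stab}(\bar\sigma)$ fixes $\sigma$, so $\delta_\sigma$ is already $\Gamma$-invariant; equivariance propagates this to $\prob_{\bar\sigma}(\sigma_t\in\cdot)$ for every $t$, that law is therefore uniform on each fiber, and your fiberwise $\ell^1$ identity yields equality directly with no symmetrization step. Restrict the hypothesis to $\sigma=\bar\sigma$ (equivalently, to a starting law uniform on $\bm W^{-1}(\bm w)$) and the argument is complete.
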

In agreement with the aforementioned lemma, we will observe the total variance distance of the $2m$-coordinate chain instead of that of the original chain.

\begin{lemma}\label{lem:2mCoup}
Let us suppose two configuration $\sigma_0$ and $\sigma_0'$ satisfy $S^{(i)}(\sigma_0)=S^{(i)}(\sigma_0')$ for all $1\leq i\leq m$. For the fixed good configuration $\bar{\sigma}$, we define
\[
\mathcal{A}_i:=\left\{ \sigma\in\Omega :\min\{ U_i(\sigma),\bar{u_i}-U_i(\sigma), V_i(\sigma),\bar{v_i}-V_i(\sigma)\}\geq\frac{|G_i|}{16} \right\},\quad \mathcal{A}:=\bigcap_{i=1}^m\mathcal{A}_i.
\]
Then, there exists a coupling $(\sigma_t,\sigma_t')$ of the Glauber dynamics with starting configuration $(\sigma_0,\sigma_0')$ satisfying the following:

\begin{enumerate}
    \item $\bS_t=\bS_t'$ for all $t\geq 0$,
    \item For $R_t^{(i)}:=U_t'^{(i)}-U_t^{(i)}$, 
    \[
    \E_{\sigma_0,\sigma_0'}\left[ R_{t+1}^{(i)}-R_t^{(i)}|\sigma_t,\sigma_t' \right]=-\frac{R_t^{(i)}}{n}+O(n^{-2}),
    \]
    \item There exists a constant $c>0$ not depending on $n$ such that on the event $\{\sigma_t,\sigma_t'\in\mathcal{A}\}$, 
    $$\prob_{\sigma_0,\sigma'_0}\left( R_{t+1}^{(i)}-R_t^{(i)}\neq 0|\sigma_t,\sigma_t' \right)\geq c>0, \quad i=1,2,\cdots,m.$$ 
\end{enumerate}
\end{lemma}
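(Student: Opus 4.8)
\medskip\noindent\textbf{Plan of proof.}
I would run the \emph{modified monotone coupling} of the paragraph preceding the proof of Lemma~\ref{lem:MagCoup}, with the matching re-drawn at every step. Precisely: at time $t$ pick $I$ uniformly on $V$; pick, independently, a uniformly random bijection $f_t\colon V\to V$ that fixes every block and carries the $\sigma_t$-positive sites of each $G_j$ onto the $\sigma_t'$-positive sites of $G_j$ (such $f_t$ exists because $\bS_t=\bS_t'$); draw $U\sim\mathrm{Unif}[0,1]$ and let $S^{I}$ be the new spin at $I$ produced by the Glauber rule using $U$; finally let $\sigma_{t+1}$ agree with $\sigma_t$ off $I$ with $\sigma_{t+1}(I)=S^{I}$, and $\sigma_{t+1}'$ agree with $\sigma_t'$ off $f_t(I)$ with $\sigma_{t+1}'(f_t(I))=S^{I}$. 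Since $\sigma_t(I)=\sigma_t'(f_t(I))$ and the two chains carry the same block magnetizations, they share the same updating threshold, so each marginal is the Glauber dynamics; and because $f_t$ is block- and sign-preserving, every block sum receives the same increment in both chains, so $\bS_{t+1}=\bS_{t+1}'$. As $\bS_0=\bS_0'$, induction gives part~(1).

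For (2) and (3) fix $i$ and record the following book-keeping. Among the sites of $G_i$ on which $\bar\sigma=+1$, let $b_t$ be the number with $(\sigma_t,\sigma_t')=(+1,-1)$ and $c_t$ the number with $(\sigma_t,\sigma_t')=(-1,+1)$; the sites where the two chains agree contribute equally to $U_t^{(i)}$ and $U_t'^{(i)}$, so $R_t^{(i)}=c_t-b_t$. Writing $D_t^{(i)}$ for the $i$-th Hamming distance of \eqref{eq:distance}, the constraint $\bS_t=\bS_t'$ forces the two kinds of disagreement site of $G_i$ to occur in equal numbers $D_t^{(i)}/2$, so the four type-counts on $G_i$ are $b_t$, $\tfrac{D_t^{(i)}}{2}-b_t$, $c_t$, $\tfrac{D_t^{(i)}}{2}-c_t$. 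Now $R_t^{(i)}$ can change only when $I\in G_i$, the spin actually flips, and $\bar\sigma(I)\neq\bar\sigma(f_t(I))$, in which case it moves by $\pm1$. Conditioning on $\sigma_t(I)$ — which fixes the flip probability to $r_-(X_t^{(i)}-K_{ii})$ or $r_+(X_t^{(i)}+K_{ii})$ according as $\sigma_t(I)=+1$ or $-1$, with $X_t^{(i)}$ as in \eqref{eq:def_X} — and averaging over $f_t(I)$ (which, given $\sigma_t(I)$, is uniform over the correspondingly-signed sites of $\sigma_t'$ in $G_i$), a short computation whose only non-trivial input is the identity $b\bigl(\tfrac{D}{2}-c\bigr)-c\bigl(\tfrac{D}{2}-b\bigr)=\tfrac{D}{2}(b-c)$ collapses to
\[
\E_{\sigma_0,\sigma_0'}\bigl[R_{t+1}^{(i)}-R_t^{(i)}\,\big|\,\sigma_t,\sigma_t'\bigr]
=-\frac{R_t^{(i)}}{n}\Bigl(r_-\bigl(X_t^{(i)}-K_{ii}\bigr)+r_+\bigl(X_t^{(i)}+K_{ii}\bigr)\Bigr).
\]
By \eqref{control_r} the parenthesised factor equals $1+O(n^{-1})$ (and is identically $1$ if $k_{ii}=0$), which gives $-R_t^{(i)}/n+O(n^{-2})$ and proves~(2).

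For (3) I would lower-bound the probability of one favourable scenario on $\{\sigma_t,\sigma_t'\in\mathcal A\}$: namely, that $I$ is a site of $G_i$ with $\sigma_t(I)=+1$ and $\bar\sigma(I)=+1$, that $f_t(I)$ is a $\bar\sigma$-negative site, and that $S^I=-1$. On this event $U_t^{(i)}$ decreases by one while $U_t'^{(i)}$ is unchanged, so $R_{t+1}^{(i)}\neq R_t^{(i)}$; its probability is $\frac{U_i(\sigma_t)}{n}\cdot\frac{\bar v_i-V_i(\sigma_t')}{u^{(i)}}\cdot r_-(X_t^{(i)}-K_{ii})$, where $u^{(i)}\le|G_i|$ is the number of $\sigma_t$-positive sites of $G_i$. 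The defining bounds of $\mathcal A$ give $U_i(\sigma_t)\ge|G_i|/16$ and $\bar v_i-V_i(\sigma_t')\ge|G_i|/16$, and $r_-(X_t^{(i)}-K_{ii})$ is bounded below by some $c_1(\beta)>0$ as in Remark~\ref{Rmk:UniformlyBdd}; hence the probability is at least $p_i c_1(\beta)/256$, and $c:=\min_i p_i c_1(\beta)/256$ works.

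The main obstacle is to arrange the coupling so that (2) and (3) hold at once, and then to carry the four-type case analysis through for (2). Matching only the disagreement sites, as in Lemma~\ref{Lem:CoupleSameMag}, yields exactly the same drift but can leave $R_t^{(i)}$ frozen — for instance whenever all $\bar\sigma$-positive sites of $G_i$ agree in the two chains — so (3) would fail; re-drawing the full sign-preserving matching repairs this, since it may pair a $\bar\sigma$-positive agreement site of one chain with a $\bar\sigma$-negative site of the other, while leaving the contraction of (2) intact. Checking that averaging over this richer matching really collapses to the clean factor above, rather than leaving cross terms, is the computational heart of the argument.
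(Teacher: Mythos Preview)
Your coupling is exactly the paper's: the paper phrases it as choosing $I'$ uniformly from $\{v\in G_i:\sigma_t'(v)=\sigma_t(I)\}$ rather than drawing a full sign-preserving bijection, but these are equivalent. Your arguments for (1) and (3) match the paper's, and your displayed drift formula for (2) is also the same.

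The one wrinkle is the sketch of the computation for (2). The bookkeeping via the disagreement counts $b_t,c_t$ and the identity $b\bigl(\tfrac{D}{2}-c\bigr)-c\bigl(\tfrac{D}{2}-b\bigr)=\tfrac{D}{2}(b-c)$ belong to the \emph{disagreement}-matching of Lemma~\ref{Lem:CoupleSameMag} that you explicitly discard, not to the full sign-preserving matching you actually run. Under the full matching, conditioning on $\sigma_t(I)=+1$, the values $\bar\sigma(I)$ and $\bar\sigma(f_t(I))$ are independent with laws $(U_t^{(i)},\bar v_i-V_t^{(i)})/u^{(i)}$ and $(U_t'^{(i)},\bar v_i-V_t'^{(i)})/u^{(i)}$; the relevant cancellation is then
\[
U_t^{(i)}(\bar v_i-V_t'^{(i)})-(\bar v_i-V_t^{(i)})U_t'^{(i)}=-R_t^{(i)}\,u^{(i)},
\]
using $U_t'^{(i)}-U_t^{(i)}=V_t'^{(i)}-V_t^{(i)}=R_t^{(i)}$, which is what the paper's $A_i,B_i,C_i,D_i$ partition delivers. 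Your identity happens to give the same drift (as you yourself observe, both couplings contract $R$ identically), but it does not verify (2) for the coupling you need for (3). Replacing that one line with the identity above fixes the sketch.
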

\begin{proof}
We construct the coupling as follows: $I$ and $U$ are uniform random variables over $V$ and $[0,1]$ respectively. We then generate a random spin $S$ from $I$ and $U$ by
\[
S:=\sum_{i=1}^m\mathbbm{1}_{I\in G_i}\left(\mathbbm{1}_{U\leq r_+\left(\sum_{1\leq j\leq m}{K_{ij}M_j}-K_{ii}\sigma(I)\right)}-\mathbbm{1}_{U>r_+\left(\sum_{1\leq j\leq m}{K_{ij}M_j}-K_{ii}\sigma(I)\right)}\right),
\]
and set
\[
\sigma_{t+1}(v)=
\begin{cases}
\sigma_t(v),& v\neq I,\\
S,& v=I.
\end{cases}
\]
Now for $\sigma_t'$, if $I\in G_i$ for some $i$, we choose $I'$ uniformly from $\{v\in G_i': \sigma_t'(v)=\sigma_t(I)\}$, and set
\[
\sigma'_{t+1}(v)=
\begin{cases}
\sigma'_t(v),& v\neq I',\\
S,& v=I'.
\end{cases}
\]
By definition, the magnetizations keep the agreement that $\bS_t=\bS'_t$ holds for all $t\geq 0$ and $(\sigma'_t)$ satisfies the Glauber dynamics.

For the given configuration $\sigma$, we divide the sites of each set $G_i$ ($i=1,2,\cdots,m$) into four sets (Fig \ref{tablebutfigure}),
\begin{align*}
    A_i(\sigma)&:=\{v\in G_i:\sigma(v)=\bar{\sigma}(v)=1\},\\
    B_i(\sigma)&:=\{v\in G_i:\sigma(v)=-1, \bar{\sigma}(v)=1\},\\
    C_i(\sigma)&:=\{v\in G_i:\sigma(v)=1, \bar{\sigma}(v)=-1\},\\
    D_i(\sigma)&:=\{v\in G_i:\sigma(v)=\bar{\sigma}(v)=-1\}.
\end{align*}
Note that
\begin{align*}
|A_i(\sigma)|=U_i(\sigma), ~|B_i(\sigma)|=\bar{u}_i-U_i(\sigma), ~|C_i(\sigma)|=\bar{v}_i-V_i(\sigma), ~|D_i(\sigma)|=V_i(\sigma)|.
\end{align*}

\begin{table}[h]
\begin{center}
\begin{tabular}{|l|cccccc|cccccc|}
\hline
$\bar{\sigma}^{(i)}$&+&+              &+                     &+& + & + & $-$ & $-$ & $-$ & $-$ & $-$ & $-$ \\ 
                    & &$\bar{u}_i$    &\multicolumn{1}{l}{ }& &               & & & $\bar{v}_i$& &\multicolumn{1}{l}{}& & \\ \hline
$\sigma_t^{(i)}$    &+&+              &\multicolumn{1}{l|}{+}&$-$& $-$ & $-$ & + & + & + & \multicolumn{1}{l|}{+} & $-$ & $-$ \\
                    & &$A_i(\sigma_t)$&\multicolumn{1}{l|}{ }& &$B_i(\sigma_t)$& & & $C_i(\sigma_t)$& &\multicolumn{1}{l|}{}&$D_i(\sigma_t)$& \\ \hline
$\sigma_t'^{(i)}$   &+&+              &+             &\multicolumn{1}{l|}{+}& $-$ & $-$ & + & + & \multicolumn{1}{l|}{+} & $-$ & $-$ & $-$ \\
                    & &$A_i(\sigma'_t)$&             &\multicolumn{1}{l|}{} &$B_i(\sigma'_t)$& & &$C_i(\sigma'_t)$&\multicolumn{1}{l|}{}& &$D_i(\sigma'_t)$&   \\ \hline
\end{tabular}
\end{center}
\captionof{figure}{Partitions of the vertices of $\sigma_t^{(i)}$ and $\sigma_t'^{(i)}$} \label{tablebutfigure}
\end{table}

The possible values of $R_{t+1}^{(i)}-R_{t}^{(i)}$ follow Table \ref{table:del R}.

\begin{table}[h]
\setcounter{table}{0}
\begin{center}
\begin{tabular}{llr|c}
$I$                  & $I'$                   & $S$ & $R^{(i)}_{t+1}-R^{(i)}_t$ \\\hline
$I\in B_i(\sigma_t)$ & $I'\in D_i(\sigma'_t)$ & +1  & --1        \\
$I\in C_i(\sigma_t)$ & $I'\in A_i(\sigma'_t)$ & --1  & --1        \\
$I\in A_i(\sigma_t)$ & $I'\in C_i(\sigma'_t)$ & --1  & +1          \\
$I\in D_i(\sigma_t)$ & $I'\in B_i(\sigma'_t)$ & +1  & +1        \\
\multicolumn{3}{l|}{otherwise} & 0                                       
\end{tabular}
\end{center}
\caption{$R_{t+1}^{(i)}-R_{t}^{(i)}$}
\label{table:del R}
\end{table}

Considering $S_t^{(i)}={S'}_t^{(i)}$, we obtain $R_t^{(i)}=U_t'^{(i)}-U_t^{(i)}=V_t'^{(i)}-V_t^{(i)}$. Hence,
\begin{align*}
    \prob_{\sigma_0,\sigma'_0}(R_{t+1}^{(i)}-R_{t}^{(i)}=-1|\sigma_t,\sigma_t')=:a(U_t^{(i)},V_t^{(i)},{U'}_t^{(i)},{V'}_t^{(i)}),\\
    \prob_{\sigma_0,\sigma'_0}(R_{t+1}^{(i)}-R_{t}^{(i)}=+1|\sigma_t,\sigma_t')=:b(U_t^{(i)},V_t^{(i)},{U'}_t^{(i)},{V'}_t^{(i)}),
\end{align*}
where
\begin{align*}
    a(U_t^{(i)},V_t^{(i)},{U'}_t^{(i)},{V'}_t^{(i)})
    &=\frac{\bar{u}_i-U_t^{(i)}}{n}\frac{V_t'^{(i)}}{\bar{u}_i-{U'}_t^{(i)}+{V'}_t^{(i)}} r_+\left(\sum_{1\leq j\leq m}{K_{ij}M_j}+K_{ii}\right)\\
    &+\frac{\bar{v}_i-V_t^{(i)}}{n}\frac{U_t'^{(i)}}{\bar{v}_i-{V'}_t^{(i)}+{U'}_t^{(i)}} r_-\left(\sum_{1\leq j\leq m}{K_{ij}M_j}-K_{ii}\right),\\
    b(U_t^{(i)},V_t^{(i)},{U'}_t^{(i)},{V'}_t^{(i)})
    &=\frac{U_t^{(i)}}{n}\frac{\bar{v}_i-{V'}_t^{(i)}}{{U'}_t^{(i)}+\bar{v}_i-{V'}_t^{(i)}} r_-\left(\sum_{1\leq j\leq m}{K_{ij}M_j}-K_{ii}\right)\\
    &+\frac{V_t^{(i)}}{n}\frac{\bar{u}_i-{U'}_t^{(i)}}{\bar{u}_i-{U'}_t^{(i)}+{V'}_t^{(i)}} r_+\left(\sum_{1\leq j\leq m}{K_{ij}M_j}+K_{ii}\right).
\end{align*}
Therefore, using $R_t^{(i)}=U_t'^{(i)}-U_t^{(i)}=V_t'^{(i)}-V_t^{(i)}$, we obtain
\begin{align*}
    \E_{\sigma_0,\sigma_0'}\bigg[R_{t+1}^{(i)}-R_t^{(i)}|\sigma_t,&\sigma_t'  \bigg]= b(U_t^{(i)},V_t^{(i)},{U'}_t^{(i)},{V'}_t^{(i)})-a(U_t^{(i)},V_t^{(i)},{U'}_t^{(i)},{V'}_t^{(i)})\\
    &=-\frac{R_t^{(i)}}{n}\left( r_+\left(\sum_{1\leq j\leq m}{K_{ij}M_j}+K_{ii}\right)+r_-\left(\sum_{1\leq j\leq m}{K_{ij}M_j}-K_{ii}\right) \right)\\[-2pt]
    &=-\frac{R_t^{(i)}}{n}+O(n^{-2}),
\end{align*}
where the last equality follows from \eqref{control_r}. In addition, on the event $\{\sigma_t,\sigma_t'\in\mathcal{A} \}$,
\[
\prob_{\sigma_0,\sigma'_0}\left(R_{t+1}^{(i)}-R_{t}^{(i)}\neq 0\big |\sigma_t,\sigma_t'\right)\geq b(U_t^{(i)},V_t^{(i)},{U'}_t^{(i)},{V'}_t^{(i)})\geq c,
\]
for some constant $c>0$, independent of $n$, because $r_+$ and $r_-$ are uniformly bounded away from zero and one.
\end{proof}

Now, we demonstrate Theorem \ref{thm:cutoffUpper}.
\begin{proof}[Proof of Theorem \ref{thm:cutoffUpper}]
Combining Lemma \ref{TV} and (\ref{GoodDistance}), we have
\[
d_n(t+\zeta n)\leq \max_{{\bm{w}}\in\mathcal{C}}\Vert \prob_{{\bm{w}}}({\bm{W}}_t\in\cdot)-\nu \Vert_{\textnormal{TV}}+O(n^{-1}).
\]
For given $2m$-coordinate chains ${\bm{W}}_t$ and ${\bm{W}}'_t$ starting from ${\bm{w}}$ and ${\bm{w}}'$, we define the stopping times when the $i$th coordinates and the full chain coincide by
\[
\tau_{i,c}:=\min\{t\geq 0:(U_t^{(i)},V_t^{(i)})=({U'}_t^{(i)},{V'}_t^{(i)})\},\quad \tau_c:=\min\{t\geq 0:{\bm{W}}_t={\bm{W}}'_t\}.
\]
It is well-known that (e.g \cite[Section 5.2]{MCMC}),
\[
\Vert \prob_{{\bm{w}}}({\bm{W}}_t\in\cdot)-\prob_{{\bm{w}}'}({\bm{W}}'_t\in\cdot) \Vert_{\textnormal{TV}}\leq \prob_{{\bm{w}},{\bm{w}}'}(\tau_c>t),
\]
follows; thus we obtain
\[
\max_{{\bm{w}}\in\mathcal{C}}\Vert \prob_{{\bm{w}}}({\bm{W}}_t\in\cdot)-\nu \Vert_{\textnormal{TV}}
\leq \max_{\substack{{\bm{w}}\in\mathcal{C} \\{\bm{w}}'\in\mathcal{W}}} \prob_{{\bm{w}},{\bm{w}}'}(\tau_c>t).
\]
Hence, all we have to do is to bound the right-hand side. We define $t_n(\gamma):=t_n+\gamma n$ and fix a good starting figuration $\bar{\sigma}\in\Omega_0$ with the corresponding $2m$-coordinate chain $\mathbf{\bar{{\bm{w}}}}=(\bar{u}_1,\bar{v}_1,\cdots,\bar{u}_m,\bar{v}_m)\in\mathcal{C}$. We will construct a suitable coupling by two steps.

The first step is the magnetization coupling phase. By Lemma \ref{lem:MagCoup}, there exists a coupling $(\sigma_t,\sigma_t')$ with starting configuration $(\bar{\sigma}, \sigma')$ such that for $H_M:=\{\tau_{\text{mag}}\leq t_n(\gamma)\}$, $\prob(H_M^c)\leq O(\gamma^{-1/2})$ holds.

The next step is the $2m$-coordinate chain coupling phase. On the event $H_M$, we run the coupling defined on Lemma \ref{lem:2mCoup} for $t>t_n(\gamma)$, and on the event $H_M^c$, which is unlikely to occur for large $\gamma$, we run two chains independently for $t>t_n(\gamma)$. To deal with the situation described on Lemma \ref{lem:2mCoup}, we define
\[
H_i(t):=\{\sigma_t^{(i)}\in\mathcal{A}_i,{\sigma'}_t^{(i)}\in\mathcal{A}_i\},\quad H_i:=\bigcap_{t_n(\gamma)\leq t\leq t_n(2\gamma)} H_i(t),\quad H_{G}:=\bigcap_{i=1}^m H_i,
\]
and we claim
\[
\prob_{\bar{\sigma},\sigma'}(H_i^c)\leq \gamma O(n^{-1}),\quad i=1,2,\cdots,m.
\]
To that end, let us consider the set
\[
\mathcal{G}_i:=\{v\in G_i : \bar{\sigma}(v)=+1\},\quad i=1,2,\cdots,m.
\]
Note that
\begin{align*}
\{\sigma_t^{(i)}\notin\mathcal{A}_i\}\subset \{U_t^{(i)}<np_i/16\}&\cup\{\bar{u}_i-U_t^{(i)}<np_i/16\}\\
&\cup\{V_t^{(i)}<np_i/16\}\cup\{\bar{v}_i-V_t^{(i)}<np_i/16\},
\end{align*}
and because $\bar{\sigma}\in\Omega_0$, we have $\bar{u}_i\geq np_i/4$; thus, $U_t^{(i)}< np_i/16$ implies $\bar{u}_i-U_t^{(i)}\geq 3np_i/16$. Again, this implies that $|M_t(\mathcal{G}_i)|=\frac{1}{2}|\sum_{v\in \mathcal{G}_i}\sigma_t(v)|=\frac{1}{2}|U_t^{(i)}-(\bar{u}_i-U_t^{(i)})|\geq np_i/16$. Similarly, we can obtain that $\bar{u}_i-U_t^{(i)}<np_i/16$ implies $|M_t(G_i/\mathcal{G}_i)|\geq np_i/16$. Now, let us define
\[
B^*:=\bigcup_{t_n(\gamma)\leq t\leq t_n(2\gamma)} \Big\{|M_t(\mathcal{G}_i)|\geq np_i/16\Big\},\quad Y^*:=\sum_{t_n(\gamma)\leq t\leq t_n(2\gamma)} \mathbbm{1}_{|M_t(\mathcal{G}_i)|\geq np_i/32}.
\]
Since $M_t(\mathcal{G}_i)$ has increments of 0, 1, or $-1$, we have $B^*\subseteq\{Y^*\geq np_i/32\}$. Accordingly, with Markov's inequality, we obtain
\[
\prob_{\bar{\sigma},\sigma'}(B^*)\leq\prob_{\bar{\sigma},\sigma'}(Y^*\geq np_i/32)\leq32\E_{\bar{\sigma},\sigma'}[Y^*]/(np_i).
\]
Observing that $\rho_n^t=O(\sqrt{n})$ holds for $t\geq t_n$, by combining Proposition \ref{Prop:expected} (2) and Chebyshev's inequality, we obtain
$\prob_{\bar{\sigma},\sigma'}\left(|M_t(\mathcal{G}_i)|\geq np_i/32\right)=O(n^{-1})$, so $\E_{\bar{\sigma},\sigma'}[Y^*]=\gamma O(1)$ to get $\prob_{\bar{\sigma},\sigma'}(B^*)=\gamma O(n^{-1})$. Similar results hold for $\{M_t(G_i/\mathcal{G}_i)\geq np_i/16\}$ and for $M'$ instead of $M$ as well; this verifies our claim,
\[
\prob_{\bar{\sigma},\sigma'}(H_i^c)\leq 8\gamma O(n^{-1})=\gamma O(n^{-1}),\quad i=1,2,\cdots,m.
\]
Therefore, $\prob_{\bar{\sigma},\sigma'}(H_G^c)\leq \sum_{i=1}^m\prob_{\bar{\sigma},\sigma'}(H_i^c) \leq\gamma O(n^{-1})$. 

Our last claim is
\[
\prob_{\bar{\sigma},\sigma'}(\tau_{i,c}>t_n(2\gamma)\cap H_i \cap H_M)=O(\gamma^{-1/2}).
\]
On purpose, according to Lemma \ref{Lem:SupMar} and Lemma \ref{lem:2mCoup}, since $R_t^{(i)}$ does not reach $0$ while $t<\tau_{i,c}$, we have
\[
\prob_{\bar{\sigma},\sigma'}\left(\tau_{i,c}>t_n(2\gamma)\cap H_i \cap H_M\big |\sigma_{t_n(\gamma)},\sigma_{t_n(\gamma)}'\right)\leq\frac{c|R_{t_n(\gamma)}^{(i)}|}{\sqrt{n \gamma}},
\]
for some constant $c$, and taking expectation, we obtain
\[
\prob_{\bar{\sigma},\sigma'}\left(\tau_{i,c}>t_n(2\gamma)\cap H_i \cap H_M\right)\leq\frac{c\E_{\bar{\sigma},\sigma'}|R_{t_n(\gamma)}^{(i)}|}{\sqrt{n \gamma}},
\]
Meanwhile, $\E_{\bar{\sigma},\sigma'}|R_{t_n(\gamma)}^{(i)}|\leq \E_{\bar{\sigma}}|M_{t_n(\gamma)}(\mathcal{G}_i)|+\E_{\sigma'}|M'_{t_n(\gamma)}(\mathcal{G}_i)|=O(\sqrt{n})$ follows by Proposition \ref{Prop:expected} because $|R_t^{(i)}|=|{U'}_t^{(i)}-U_t^{(i)}|=|M_t'(\mathcal{G}_i)-M_t(\mathcal{G}_i)|$, to verify the claim. In conclusion,
\begin{align*}
    \prob_{\bar{\sigma},\sigma'}(\tau_c>t_n(2\gamma))
    &\leq \prob_{\bar{\sigma},\sigma'}(\tau_c>t_n(2\gamma)\cap H_M\cap H_G)+\prob_{\bar{\sigma},\sigma'}(H_M^c)+\prob_{\bar{\sigma},\sigma'}(H_G^c)\\
    &\leq \sum_{i=1}^m\prob_{\bar{\sigma},\sigma'}(\tau_{i,c}>t_n(2\gamma)\cap H_M\cap H_G)+\prob_{\bar{\sigma},\sigma'}(H_M^c)+\prob_{\bar{\sigma},\sigma'}(H_G^c)\\
    &=O(\gamma^{-1/2})+O(\gamma^{-1/2})+\gamma O(n^{-1})=O(\gamma^{-1/2})+\gamma O(n^{-1}),
\end{align*}
to yield
\[
d_n(t_n+(2\gamma+\zeta)n)\leq O(\gamma^{-1/2})+\gamma O(n^{-1})+O(n^{-1}).
\]
By taking $n\rightarrow \infty$ and then $\gamma \rightarrow\infty$, the limit goes to zero.
\end{proof}

\subsection{Lower Bound}
In order to verify the lower bound in the high temperature regime, we use the following lemma from \cite[Proposition 7.9]{MCMC}.
\begin{lemma}\label{lem:lower}
Let $f:\mathcal{S}\rightarrow \mathbb{R}$ be a measurable function, and $\nu_1,\nu_2$ be two probability measures on $\mathcal{S}$. We define $\sigma_*^2:=\max\{\Var_{\nu_1}f,\Var_{\nu_2}f\}$. If $|\E_{\nu_1}f-\E_{\nu_2}f|\geq r \sigma_*$, then
\[
\Vert\nu_1-\nu_2\Vert_{\textnormal{TV}}\geq 1-\frac{8}{r^2}.
\]
\end{lemma}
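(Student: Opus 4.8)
The plan is to exploit the hypothesis that $f$ is concentrated under each $\nu_j$ around its mean, and that the two means are well separated on the scale of the standard deviation, so a single threshold event distinguishes $\nu_1$ from $\nu_2$. First I would assume without loss of generality that $\E_{\nu_1}f\leq \E_{\nu_2}f$, abbreviate $\mu_j:=\E_{\nu_j}f$, and write $\Delta:=\mu_2-\mu_1\geq r\sigma_*$ for the gap. The natural test set is the sublevel set $A:=\{x\in\mathcal{S}: f(x)\leq (\mu_1+\mu_2)/2\}$, which should be typical under $\nu_1$ and atypical under $\nu_2$.

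The core of the argument is a pair of Chebyshev estimates, one for each measure, with the threshold placed exactly at the midpoint so that both tails are at distance $\Delta/2$ from the respective mean. Under $\nu_2$, the event $A$ forces $|f-\mu_2|\geq \Delta/2$, so $\nu_2(A)\leq \Var_{\nu_2}f/(\Delta/2)^2\leq \sigma_*^2/(r\sigma_*/2)^2=4/r^2$. Symmetrically, under $\nu_1$ the event $A^c$ forces $|f-\mu_1|\geq \Delta/2$, hence $\nu_1(A^c)\leq 4/r^2$, i.e. $\nu_1(A)\geq 1-4/r^2$.

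To finish I would invoke the variational characterization $\|\nu_1-\nu_2\|_{\mathrm{TV}}=\max_{B}\bigl(\nu_1(B)-\nu_2(B)\bigr)$ and specialize to $B=A$, which gives $\|\nu_1-\nu_2\|_{\mathrm{TV}}\geq \nu_1(A)-\nu_2(A)\geq (1-4/r^2)-4/r^2=1-8/r^2$, exactly the asserted bound. (This is the standard argument behind \cite[Proposition 7.9]{MCMC}.) There is no serious obstacle; the only points requiring mild care are bounding \emph{both} conditional variances by the single quantity $\sigma_*^2$, and placing the cutoff precisely at $(\mu_1+\mu_2)/2$ so that the two Chebyshev bounds are identical and add up cleanly to $8/r^2$.
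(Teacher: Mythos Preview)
Your argument is correct and is precisely the standard Chebyshev-plus-midpoint-threshold proof; the paper does not supply its own proof of this lemma but simply cites \cite[Proposition~7.9]{MCMC}, whose proof is exactly the one you wrote.
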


We now restate the lower bound part of Theorem \ref{Thm:HighTemp}.
\begin{theorem} When $\beta < \beta_{cr}$, 
\[
\lim_{\gamma\rightarrow\infty}\lim_{n\rightarrow\infty} d_n(t_n-\gamma n) =1.
\]
\end{theorem}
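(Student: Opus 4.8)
\emph{Proof sketch.} The plan is a distinguishing-statistic lower bound (Wilson's method). Let $\ba$ be the common Perron left eigenvector of $\mathbf{Q}$ and $\mathbf{B}$ from \eqref{const} and Proposition~\ref{prop:eigen}, normalized by $\Vert\ba\Vert_1=1$; then $\ba^{\top}\mathbf{Q}=\rho_n\ba^{\top}$, and since $\beta_{cr}=\big(\sum_{i,j}a_ip_ik_{ij}\big)^{-1}$ one also has $\ba^{\top}\mathbf{B}=\beta_{cr}^{-1}\ba^{\top}$. Take the statistic $f(\sigma):=\ba^{\top}\bm{M}(\sigma)=\sum_{i=1}^{m}a_iM^{(i)}(\sigma)$ and run the dynamics from the all-$(+1)$ configuration $\bone$. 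By Proposition~\ref{prop:VariBound}, $\Var_{\bone}\big(f(\sigma_t)\big)=n^{2}\Var_{\bone}(\ba^{\top}\bS_t)=O(n)$ uniformly in $t$, and, letting $t\to\infty$, $\Var_{\mu_n}(f)=O(n)$ as well; hence $\sigma_*^{2}:=\max\{\Var_{\bone}(f(\sigma_{t_n-\gamma n})),\Var_{\mu_n}(f)\}=O(n)$. Since $\mu_n$ is invariant under $\sigma\mapsto-\sigma$, $\E_{\mu_n}f=0$, so by Lemma~\ref{lem:lower} it suffices to produce a constant $c>0$ (independent of $n$) with
\[
\E_{\bone}\big[\ba^{\top}\bm{M}_t\big]\ \ge\ c\,n\,\rho_n^{\,t}\qquad\text{for all }t\le t_n-\gamma n .
\]

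\emph{The mean bound.} From the drift identity \eqref{eq:drift} and \eqref{control_r} (cf.\ Remark~\ref{Rmk:UniformlyBdd}), $\E[\bm{M}_{t+1}\mid\mathcal{F}_t]=\bm{G}(\bm{M}_t)+O(n^{-1})$ with $\bm{G}(\bx)_i=(1-\tfrac1n)x_i+p_i\tanh\!\big(\tfrac{\beta}{n}(\mathbf{K}\bx)_i\big)$. If $\bx\ge\bm 0$ and $\Vert\bx\Vert_1\le\Phi$, then $0\le\tfrac{\beta}{n}(\mathbf{K}\bx)_i\le\delta(\Phi):=\tfrac{\beta}{n}\big(\max_{k,\ell}k_{k\ell}\big)\Phi$, so concavity of $\tanh$ gives the chord bound $\tanh(\tfrac{\beta}{n}(\mathbf{K}\bx)_i)\ge\tfrac{\tanh\delta(\Phi)}{\delta(\Phi)}\tfrac{\beta}{n}(\mathbf{K}\bx)_i$, whence, using $\ba^{\top}\mathbf{B}=\beta_{cr}^{-1}\ba^{\top}$,
\[
\ba^{\top}\bm{G}(\bx)\ \ge\ \rho_n(\Phi)\,\ba^{\top}\bx,\qquad
\rho_n(\Phi)=1-\tfrac1n\Big(\tfrac{1}{2\alpha}+\tfrac{\beta}{3\beta_{cr}}\,\delta(\Phi)^2+O(\delta(\Phi)^4)\Big).
\]
Using $\Vert\bm{M}_t\Vert_1\le n$ together with $\E_{\bone}\Vert\bm{M}_t\Vert_1\le n\rho_n^{\,t}+O(\sqrt n)$ and $\E_{\bone}S_t^{(i)}\ge 0$ from Proposition~\ref{Prop:expected}, one controls $\Vert\bm{M}_t\Vert_1$ by a deterministic envelope $\Phi_t\asymp n\rho_n^{\,t}$ and keeps $\bm{M}_t\ge\bm 0$ on $[0,t_n-\gamma n]$, both with probability $1-o(\rho_n^{\,t})$ (a maximal inequality for the near-supermartingale $\rho_n^{-t}\ba^{\top}\bm{M}_t$); on the corresponding event the recursion $\E[\ba^{\top}\bm{M}_{t+1}\mid\mathcal{F}_t]\ge\rho_n(\Phi_t)\,\ba^{\top}\bm{M}_t-O(n^{-1})$ iterates (made rigorous by a stopping-time argument). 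Since $\delta(\Phi_s)^2\asymp\rho_n^{\,2s}$ and $\sum_{s\ge0}\rho_n^{\,2s}\asymp n\alpha$, summing the logarithms yields $\sum_{s<t}\log\rho_n(\Phi_s)=-\tfrac{t}{2n\alpha}-O(\alpha)$, i.e.\ $\prod_{s<t}\rho_n(\Phi_s)=\rho_n^{\,t}e^{-O(\alpha)}(1+o(1))$; moreover the accumulated $O(n^{-1})$ errors contribute only $O(n^{-1})\sum_{s<t}\rho_n^{\,s}=O(\alpha)$, and the complementary event contributes $O\big(n\cdot o(\rho_n^{\,t})\big)$. Combining, $\E_{\bone}[\ba^{\top}\bm{M}_t]\ge c\,n\,\rho_n^{\,t}$ with $c:=\tfrac12\big(\sum_i a_ip_i\big)e^{-O(\alpha)}>0$, for all large $n$.

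\emph{Conclusion.} As $\rho_n=1-\tfrac1{2n\alpha}$, one has $\rho_n^{\,t_n-\gamma n}=n^{-1/2}e^{\gamma/(2\alpha)}(1+o(1))$, so at $t=t_n-\gamma n$,
\[
\big|\E_{\bone}f-\E_{\mu_n}f\big|\ \ge\ c\,n\,\rho_n^{\,t}\ =\ c\,\sqrt n\,e^{\gamma/(2\alpha)}(1+o(1))\ \ge\ r_\gamma\,\sigma_*,\qquad r_\gamma:=c'\,e^{\gamma/(2\alpha)},
\]
for all large $n$. Lemma~\ref{lem:lower} then gives $d_n(t_n-\gamma n)\ge\Vert\prob_{\bone}(\sigma_{t_n-\gamma n}\in\cdot)-\mu_n\Vert_{\mathrm{TV}}\ge 1-8r_\gamma^{-2}=1-8(c')^{-2}e^{-\gamma/\alpha}$; letting $n\to\infty$ and then $\gamma\to\infty$ proves the theorem. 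The main obstacle is the mean bound: the effective per-step decay rate must stay equal to $\rho_n$ up to a multiplicative constant over a window of length $\asymp n\log n$, since a strictly smaller rate would alter the power of $n$ and spoil the comparison with $\sigma_*=O(\sqrt n)$; this is exactly what the moving-reference-point chord bound secures, at the cost of an $n$-independent constant because $\sum_s\Phi_s^{2}$ is geometric. The attendant technical work is to show $\bm{M}_t\ge\bm 0$ and $\Vert\bm{M}_t\Vert_1\le\Phi_t$ on the whole window with probability $1-o(\rho_n^{\,t})$, so that the excluded event cannot overwhelm the $O(\sqrt n)$-scale separation.
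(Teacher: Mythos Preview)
Your overall plan---take the distinguishing statistic $f(\sigma)=\ba^{\top}\bm{M}(\sigma)$, use $\E_{\mu_n}f=0$ by spin-flip symmetry, bound $\sigma_*^2=O(n)$ via Proposition~\ref{prop:VariBound}, and feed a mean separation of order $\sqrt{n}\,e^{\gamma/(2\alpha)}$ into Lemma~\ref{lem:lower}---is exactly the paper's architecture. The divergence is in how you obtain the mean lower bound, and there your sketch has a genuine gap.

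You start from $\bone$ and need the pathwise constraints $\bm{M}_t\ge\bm{0}$ and $\Vert\bm{M}_t\Vert_1\le\Phi_t$ on the whole window with failure probability $o(\rho_n^{\,t})=o(n^{-1/2})$; otherwise the complementary event, where $\ba^{\top}\bm{M}_t$ can be as negative as $-O(n)$, swamps the $c\,n\rho_n^{\,t}=O(\sqrt n)$ target. But near the end of the window $\E_{\bone}S_t^{(i)}\asymp n^{-1/2}$ while $\Var_{\bone}S_t^{(i)}\asymp n^{-1}$ (Propositions~\ref{Prop:expected} and~\ref{prop:VariBound}), so the standard deviation matches the mean and $\prob(S_t^{(i)}<0)$ is bounded away from zero, not $o(n^{-1/2})$. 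The chord bound you invoke genuinely requires $(\mathbf{K}\bm{M}_t)_i\ge 0$ componentwise, so this is not a cosmetic issue; and no maximal inequality for $\rho_n^{-t}\ba^{\top}\bm{M}_t$ will rescue componentwise positivity. The ``stopping-time argument'' you defer to cannot close this.

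The paper avoids all pathwise control by two moves you should adopt. First, it works entirely in expectation: from $\tanh x\ge x-Cx^2$ (valid on the bounded range of $\beta X_t^{(i)}$) one gets
\[
\E\big[\ba^{\top}\bS_{t+1}\big]\ \ge\ \rho_n\,\E\big[\ba^{\top}\bS_t\big]-\tfrac{C'}{n}\,\E\Vert\bS_t\Vert_1^2+O(n^{-2}),
\]
and then uses $\E\Vert\bS_t\Vert_1^2=(\E\Vert\bS_t\Vert_1)^2+\Var\Vert\bS_t\Vert_1\le \rho_n^{2t}\sum_i (s^{(i)})^2/p_i+O(n^{-1})$ from Propositions~\ref{prop:VariBound} and~\ref{Prop:expected}(3). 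Second, it does \emph{not} start from $\bone$: summing the recursion for $Z_t:=\rho_n^{-t}\ba^{\top}\bS_t$ gives $\E Z_{t_*}\ge \sum_i a_is^{(i)}-c\sum_i (s^{(i)})^2/p_i+o(1)$ with $c=\tfrac{2\alpha\beta^2k_M^2\sum a_ip_i}{3}$, and one then \emph{chooses} $\bs$ small (any $0<s^{(i)}<a_ip_i/c$) so that this is a positive constant $\epsilon$. Starting from $\bone$ would force $\sum_i(s^{(i)})^2/p_i=1$ and the positivity of $\sum_i a_ip_i-c$ is not guaranteed for $\beta$ close to $\beta_{cr}$ (where $\alpha\to\infty$), which is precisely why the small-start trick is needed. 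With these two changes your argument becomes the paper's and goes through without any high-probability step.
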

\begin{proof}
Recall the drift of $\bS_t$ from \eqref{eq:drift} that
\begin{align*}
    \mathbb{E}\left[S_{t+1}^{(i)}-S_t^{(i)}|\mathcal{F}_t\right]
    =\frac{1}{n}\left( -S_t^{(i)} +f_n({\bS}_t)+g_n({\bS}_t) \right),
\end{align*}
and $X_t^{(i)}=\sum_{j=1}^m{K_{ij}M_t^{(j)}}$ with
\begin{align*}
    f_i({\bS}_t)=p_i\left(\tanh{\beta X_t^{(i)}}+O(n^{-1})\right),\quad
    g_i({\bS}_t)=-S_t^{(i)}\left(1+O(n^{-1})\right).
\end{align*}
Note that $\tanh{x}\geq x-\frac{x^2}{3}$ implies $f_n({\bS}_t)\geq p_i\beta X_t^{(i)}-\frac{p_i\beta^2}{3}(X_t^{(i)})^2+O(n^{-1})$. Hence, 
\begin{align*}
    \mathbb{E}\left[S_{t+1}^{(i)}-S_t^{(i)}|\mathcal{F}_t\right]
    \geq \frac{1}{n}\left( -S_t^{(i)}+ p_i\beta X_t^{(i)}-\frac{p_i\beta^2 (X_t^{(i)})^2}{3} \right)+O\left(\frac{1}{n^2}\right),
\end{align*}
to rewrite,
\begin{align*}
    \mathbb{E}\left[S_{t+1}^{(i)}|\mathcal{F}_t\right]
    &\geq \left(1-\frac{1}{n}\right)S_t^{(i)}+ \frac{p_i\beta X_t^{(i)}}{n}-\frac{p_i \beta^2(X_t^{(i)})^2}{3n} +O\left(\frac{1}{n^2}\right).
\end{align*}
Equivalently, we obtain
\begin{align*}
    \mathbb{E}\left[{\bS}_{t+1}|\mathcal{F}_t\right]
    &\geq \mathbf{Q}{\bS}_t-{\bm{x}}+O(n^{-2}),
\end{align*}
where ${\bm{x}}=\frac{\beta^2}{3n}\Big(p_1 (X_t^{(1)})^2,\cdots,p_m (X_t^{(m)})^2\Big)^{\top}$. Therefore,
\begin{align*}
    \mathbb{E}\left[\ba^{\top}{\bS}_{t+1}|\mathcal{F}_t\right]
    &\geq \rho_n \ba^{\top}{\bS}_t-\ba^{\top}{\bm{x}}+O(n^{-2}).
\end{align*}
Note that $\ba^{\top}{\bm{x}}=\frac{\beta^2}{3n}\sum_{i=1}^m{a_i p_i (X_t^{(i)})^2}$. Let $k_M:=\max_{i,j}{k_{ij}}$. Then,
\begin{align*}
    \sum_{i=1}^m{a_i p_i (X_t^{(i)})^2}\leq \sum_{i=1}^m{a_i p_i \left(\sum_{j=1}^m k_M|S_t^{(j)}|\right)^2}= k_M^2\left(\sum_{i=1}^m a_ip_i\right) \Vert \bS_t \Vert_1^2+O(n^{-2}).
\end{align*}
Hence,
\[
\mathbb{E}\left[\ba^{\top}{\bS}_{t+1}|\mathcal{F}_t\right]
    \geq \rho_n \ba^{\top}{\bS}_t- \frac{\beta^2 k_M^2\sum_{i=1}^m{a_ip_i}}{3n}\Vert\bS_t\Vert_1^2 + O(n^{-2}).
\]
By taking expectation, we get
\[
\mathbb{E}\left[\ba^{\top}{\bS}_{t+1}\right]
    \geq \rho_n \mathbb{E}\left[\ba^{\top}{\bS}_t\right]-\frac{\beta^2 k_M^2\sum_{i=1}^m{a_ip_i}}{3n}\E\Vert\bS_t\Vert_1^2 + O(n^{-2}).
\]
By Proposition \ref{prop:VariBound} and Proposition \ref{Prop:expected}-(3),
\begin{align*}
    \E\Vert\bS_t\Vert_1^2=(\E\Vert\bS_t\Vert_1)^2&+\Var\Vert\bS_t\Vert_1=(\E\Vert\bS_t\Vert_1)^2+O(n^{-1})\\
    &\leq \rho_n^{2t}\sum_{i=1}^m\frac{(s^{(i)})^2}{p_i}+2\rho_n^t\left(\sum_{i=1}^m\frac{(s^{(i)})^2}{p_i}\right)^{1/2}O(n^{-1/2})+O(n^{-1}).
\end{align*}
to get
\begin{align*}
\mathbb{E}\bigg[\ba^{\top}{\bS}_{t+1}& \bigg]\geq \mathbb{E}\left[\ba^{\top}{\bS}_t\right]\\
&-\frac{\beta^2 k_M^2\sum_{i=1}^m{a_ip_i}}{3n}\left(\rho_n^{2t}\sum_{i=1}^m\frac{(s^{(i)})^2}{p_i}+2\rho_n^t\left(\sum_{i=1}^m\frac{(s^{(i)})^2}{p_i}\right)^{1/2}O(n^{-1/2})\right) + O(n^{-2}).
\end{align*}
Let us define $Z_t:=\displaystyle\frac{\ba^{\top}{\bS}_t}{\rho_n^t}$ to obtain
\begin{align*}
    \mathbb{E}&Z_{t+1}-\mathbb{E}Z_t\\
    &\geq -\frac{\beta^2 k_M^2 \sum_{i=1}^m{a_ip_i}}{3n \rho_n^{t+1}}
    \left(\rho_n^{2t}\sum_{i=1}^m\frac{(s^{(i)})^2}{p_i}+2\rho_n^t\left(\sum_{i=1}^m\frac{(s^{(i)})^2}{p_i}\right)^{1/2}O(n^{-1/2})\right) + \frac{1}{\rho_n^{t+1}}O(n^{-2})\\
    &= -\frac{\beta^2 k_M^2 \sum_{i=1}^m{a_ip_i}}{3(n-1/(2\alpha))}
    \left(\rho_n^{t}\sum_{i=1}^m\frac{(s^{(i)})^2}{p_i}+2\left(\sum_{i=1}^m\frac{(s^{(i)})^2}{p_i}\right)^{1/2}O(n^{-1/2})\right) + \frac{1}{\rho_n^{t+1}}O(n^{-2}).
\end{align*}
Summing up from $0$ to $t-1$, we have
\begin{align*}
    \mathbb{E}Z_{t}-\mathbb{E}Z_0
    &\geq -\frac{\beta^2 k_M^2 \sum_{i=1}^m{a_ip_i}}{3(n-1/(2\alpha))}\left(\sum_{i=1}^m\frac{(s^{(i)})^2}{p_i}\frac{1-\rho_n^t}{1-\rho_n}+t\cdot O(n^{-1/2})\right) +\frac{\rho_n^{-t}-1}{1-\rho_n}O(n^{-2}).
\end{align*}
Consider the time $t_*:=t_n-2\gamma\alpha n$. Note that $\rho_n^{t_*}\geq \frac{e^\gamma}{n^{n/(2(n-(2\alpha)^{-1}))}}$ to obtain
\begin{align*}
    \mathbb{E}Z_{t_*}- & \sum_{i=1}^m  {a_is_i}
    \geq -\frac{\beta^2 k_M^2 \sum_{i=1}^m{a_ip_i}}{3(1-(2\alpha n)^{-1})/(2\alpha)}\sum_{i=1}^m\frac{(s^{(i)})^2}{p_i}(1-\rho_n^{t_*})\\
    &-\frac{\beta^2 k_M^2 \sum_{i=1}^m{a_ip_i}}{3(n-(2\alpha)^{-1})}t_*\cdot O\left(\frac{1}{n^2}\right)+\frac{\rho_n^{-t_*}-1}{1-g}O\left(\frac{1}{n^2}\right)\\
    &\geq -\frac{\beta^2k_M^2\sum_{i=1}^m{a_ip_i}}{3(1-(2\alpha n)^{-1})/(2\alpha)}\sum_{i=1}^m\frac{(s^{(i)})^2}{p_i}\left(1-\frac{e^\gamma}{n^{n/2(n-(2\alpha)^{-1})}}\right)\\
    &-\frac{\alpha\beta^2 k_M^2 \sum_{i=1}^m{a_ip_i}}{3(n-(2\alpha)^{-1})}\left(n\log n-{2\gamma n}\right) O\left(\frac{1}{n^2}\right)+2\alpha\left(\frac{n^{n/(2(n-(2\alpha)^{-1}))}}{e^\gamma}-1\right)O\left(\frac{1}{n}\right).
\end{align*}
For every $\gamma$, the right-hand side converges to $-\frac{2\alpha\beta^2k_M^2\sum_{i=1}^m{a_ip_i}}{3}\sum_{i=1}^m\frac{(s^{(i)})^2}{p_i}$ as $n\rightarrow\infty$. Let us define $c:=\frac{2\alpha\beta^2k_M^2\sum_{i=1}^m{a_ip_i}}{3}$. We have $\sum_{i=1}^m a_is_i-c\sum_{i=1}^m \frac{(s^{(i)})^2}{p_i}>0$ for any $0<s^{(i)}<\frac{a_ip_i}{c}$. Let us fix one of the satisfying starting states $\bs$ and, for that value, there exists a positive constant $\epsilon$ such that $\E Z_{t_*}>\epsilon$. Consequently, for sufficiently large $n$, we have
\[
\E_\bs(\ba^{\top}\bS_{t_*})> \epsilon \rho_n^{t_*}\geq \epsilon\frac{e^\gamma}{n^{n/(2(n-(2\alpha)^{-1}))}}\geq \frac{\epsilon e^\gamma}{\sqrt{n}}.
\]
Since $\Var({\ba}^{\top}{\bS}_{t_*})=O(n^{-1})$ by Proposition \ref{prop:VariBound}, combining with Lemma \ref{lem:lower}, there exists some positive constant $c'$ such that
\[
\lim_{\gamma\rightarrow\infty}\liminf_{n\rightarrow\infty} d_n\left(t_n-{2\alpha\gamma n}\right)\geq \lim_{\gamma\rightarrow\infty} 1-\frac{c'}{\epsilon^2 e^{2\gamma}}=1.
\]
\end{proof}

\section{Mixing time at Critical Temperature}\label{sec:critical}
In this section, we prove $O(n^{3/2})$ mixing time at the critical temperature, i.e., $\beta=\beta_{cr}$. Recall the matrices $\mathbf{K},\mathbf{P},\mathbf{D}, \mathbf{B}$ from \eqref{eq:def_matrix}. In addition, recall from Proposition \ref{prop:eigen} that $\beta=\beta_{cr}$ implies that the largest eigenvalue of the matrix $\beta\mathbf{B}$ is equal to 1. Since the matrix $\mathbf{B}=\mathbf{D}^2\mathbf{K}$ is similar to the symmetric matrix $\mathbf{D}\mathbf{K}\mathbf{D}$, the largest eigenvalue of $\beta\mathbf{D}\mathbf{K}\mathbf{D}$ is 1 as well. Since $\mathbf{K}$ is positive definite, spectral decomposition implies there exists orthogonal matrix $\mathbf{V}$ and diagonal matrix $\Lambda:=\text{diag}(\lambda_1,\lambda_2,\cdots,\lambda_{m-1},1)$ such that
\begin{align*}
    \beta \mathbf{D}\mathbf{K}\mathbf{D}=\mathbf{V}\text{diag}(\lambda_1,\cdots,\lambda_{m-1},1)\mathbf{V}^\top=\mathbf{V}\Lambda\mathbf{V}^\top,
\end{align*}
with the eigenvalues $0<\lambda_1\leq \lambda_2\leq\cdots\lambda_{m-1}<\lambda_m =1$.

\subsection{Upper Bound}\label{Thm:5.1}
\begin{theorem} When $\beta = \beta_{cr}$, we have $t_{\text{mix}}=O(n^{3/2})$.
\end{theorem}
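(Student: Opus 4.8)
\textit{The plan} is a coupling argument that, via Lemma \ref{Lem:CoupleSameMag}, reduces everything to coupling the block magnetizations in $O(n^{3/2})$ steps. Since $d_n(t)\le\max_{\sigma,\sigma'}\prob_{\sigma,\sigma'}(\tau>t)$ and, once $\bS_t=\bS_t'$, Lemma \ref{Lem:CoupleSameMag} produces full agreement in a further $O(n\log n)=o(n^{3/2})$ steps, it is enough to bound $\tau_{\mathrm{mag}}:=\min\{t:\bS_t=\bS_t'\}$. Running the monotone (grand) coupling on $\bone,\sigma,\sigma',-\bone$ simultaneously and using the coordinatewise sandwich preserved by Lemma \ref{lemma:MagCont}, one has $\tau_{\mathrm{mag}}$ for $(\sigma,\sigma')$ dominated by $\tau_{\mathrm{mag}}$ for the pair started from $\bone$ and $-\bone$; for that pair write $\bm{\delta}_t:=\bS_t-\bS_t'\ge 0$ and $W_t:=\tfrac n2\sum_i a_i\delta_t^{(i)}$, a non-negative supermartingale at $\beta=\beta_{cr}$ by Proposition \ref{VariProp} (since $\rho_n=1$).

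The core estimate is a one-step (tower-property) computation for the quadratic form $\Psi_t:=\langle\bS_t,\mathbf{P}^{-1}\bS_t\rangle=\sum_i(S_t^{(i)})^2/p_i$, applied both to a single chain and to $\bm{\delta}_t$. Inserting $\tanh(\beta y)=\beta y-\tfrac13\beta^3 y^3+O(y^5)$ into the drift \eqref{eq:drift}, using that $\mathbf{P}^{-1}-\beta\mathbf{K}=\mathbf{D}^{-1}\mathbf{V}(\mathbf{I}-\Lambda)\mathbf{V}^{\top}\mathbf{D}^{-1}$ is positive semidefinite with one-dimensional kernel spanned by the Perron vector of $\mathbf{B}$ (positive definiteness of $\mathbf{K}$ keeps the transverse spectrum strictly positive and forces $\mathbf{K}\bS_t\ne 0$ whenever $\bS_t\ne 0$), and bounding the second-order ``bracket'' term coming from the $\pm 2/n$ increments uniformly by $O(n^{-2})$ (in the spirit of Remark \ref{Rmk:UniformlyBdd}), one obtains a recursion of the shape
\[
\E[\Psi_{t+1}\mid\mathcal F_t]\ \le\ \Psi_t-\frac{c}{n}\,\Psi_t^{2}+\frac{C}{n^{2}},
\]
where the negative $\Psi_t^2$ drift is supplied by the cubic correction acting on the Perron component, the transverse part decaying at rate $1-\Theta(1/n)$ and being negligible after $O(n\log n)$ steps. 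Taking expectations, using Jensen on the $\Psi_t^2$ term, and comparing with the scalar recursion gives $\E\Psi_t=O(n/t)$ down to the equilibrium scale; since $a^3-b^3=(a-b)(a^2+ab+b^2)\ge 0$ for $a\ge b\ge 0$, the same reasoning applied to $\bm{\delta}_t$ yields a drift at least as strong as for a single chain, hence $\E\|\bm{\delta}_t\|^2=O(n/t)$. Consequently, for $t_1:=C_0 n^{3/2}$ with $C_0$ large, $\E W_{t_1}$ is small and $\prob_{\bone,-\bone}(\tau_{\mathrm{mag}}>t_1)\le\E W_{t_1}/a_{\min}=o(1)$; any residual discrepancy is finished by applying Lemma \ref{Lem:SupMar} to $V_t:=W_t^{1/2}$, a non-negative supermartingale with bounded increments whose conditional variance is uniformly $\Theta(1/n)$ on $\{\tau_{\mathrm{mag}}>t\}$ — a step changes $W_t$ with probability $\Theta(W_t/n)$ by an amount bounded below, so $\Var(W_{t+1}\mid\mathcal F_t)=\Theta(W_t/n)$ and $\Var(V_{t+1}\mid\mathcal F_t)=\Theta(1/n)$, with no $n^{-2}$ floor because the monotone coupling cancels the bulk noise — so Lemma \ref{Lem:SupMar} closes the remaining $O(n^{3/2})$ steps. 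Combining with Lemma \ref{Lem:CoupleSameMag} gives $\tau=O(n^{3/2})$ with probability $1-o(1)$, i.e.\ $t_{\mathrm{mix}}=O(n^{3/2})$.

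\textit{The main obstacle} is making the quadratic-form recursion quantitative and uniform in the general multi-component setting: decomposing $\bS_t$ along the Perron and transverse eigendirections of $\beta\mathbf{D}\mathbf{K}\mathbf{D}$, converting the sign of the cubic correction into a genuinely quadratic-in-$\Psi_t$ lower bound uniformly over the (bounded) magnetization state space, controlling every $O(n^{-2})$ error, and — most delicately — tracking the constants well enough that the final rate is $O(n^{3/2})$ rather than $O(n^{3/2}\log n)$ or $O(n^{2})$; this is exactly where the matrix identities and mean-value-theorem arguments of Sections \ref{sec:Pre}--\ref{sec:high} must be reused. A secondary technical point is verifying the uniform variance lower bound for $V_t=W_t^{1/2}$ near $W_t=0$, which is what keeps Lemma \ref{Lem:SupMar} from degrading the rate.
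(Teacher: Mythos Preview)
Your outline has the right architecture (couple magnetizations, then invoke Lemma~\ref{Lem:CoupleSameMag}), and the matrix identity $\mathbf{P}^{-1}-\beta\mathbf{K}=\mathbf{D}^{-1}\mathbf{V}(\mathbf{I}-\Lambda)\mathbf{V}^\top\mathbf{D}^{-1}$ is exactly the right tool. However, two quantitative steps do not go through as written, and the paper takes a genuinely different route at each.

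\textbf{The conditional recursion fails.} The inequality $\E[\Psi_{t+1}\mid\mathcal F_t]\le \Psi_t-\tfrac{c}{n}\Psi_t^{2}+\tfrac{C}{n^{2}}$ would require $\sum_i S_t^{(i)}(X_t^{(i)})^3\ge c\,\Psi_t^{2}$ pointwise, and this is false: when the block magnetizations $S_t^{(i)}$ carry mixed signs the sum can be negative. Your Perron/transverse heuristic does not repair this, because the transverse component is not $o(1)$ pathwise---it fluctuates at scale $n^{-1/2}$ forever---so the cross terms in the cubic are of the same order as the leading term. The same obstruction (only worse, since the coefficient in $a^3-b^3=(a-b)(a^2+ab+b^2)$ depends on both chains, not on $\bm{\delta}_t$) blocks the recursion for the difference. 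The paper sidesteps this by tracking the \emph{deterministic} sequence $\eta_t:=\sum_i p_i\bigl(\E|X_t^{(i)}|\bigr)^2$: the absolute values make every summand non-negative, and two applications of conditional Cauchy--Schwarz together with the tower property (condition on $\mathcal F_{t-1}$, then take expectations) convert the drift of $\bX_t^{\top}\mathbf{P}\bX_t$ into a clean scalar recursion $\eta_{t+1}-\eta_t\le -\tfrac{\theta}{n}\eta_t^{2}+O(n^{-2})$ with no sign ambiguity.

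\textbf{The endgame loses a power of $n$.} Even granting your recursion, at $t_1=C_0 n^{3/2}$ you obtain only $\E\|\bm{\delta}_{t_1}\|^2=O(n^{-1/2})$, hence $\E W_{t_1}=O(n\cdot n^{-1/4})=O(n^{3/4})$; the inequality $\prob(\tau_{\mathrm{mag}}>t_1)\le \E W_{t_1}/a_{\min}$ therefore gives nothing. Your square-root trick is correct ($V_t=\sqrt{W_t}$ is a supermartingale with bounded increments and $\Var(V_{t+1}\mid\mathcal F_t)=\Theta(1/n)$), but Lemma~\ref{Lem:SupMar} then yields $\prob(\tau_{\mathrm{mag}}>t_1+u)\lesssim \E V_{t_1}\,n^{1/2}/\sqrt{u}=O(n^{7/8}/\sqrt{u})$, i.e.\ only $u=O(n^{7/4})$. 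The paper avoids the difference chain here entirely: it proves for a \emph{single} chain that $\tau_0:=\min\{t:\max_i|S_t^{(i)}|\le 1/n\}$ is $O(n^{3/2})$ with high probability, by first driving $\eta_t$ down to $O(n^{-1/2})$ via the recursion above and then applying Lemma~\ref{Lem:SupMar} to the (random) supermartingale $n^{5/4}\bX_t^{\top}\mathbf{P}\bX_t$. Once each chain separately has magnetization within $1/n$ of zero, the modified monotone coupling matches them in $O(1)$ further steps; no control of $\bm{\delta}_t$ over the whole trajectory is needed.
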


Note that it takes $O(n\log n)$ steps for two configurations to coincide after their magnetizations agree by Lemma \ref{Lem:CoupleSameMag}. Therefore, it is sufficient to show that the magnetization coincides in $O(n^{3/2})$ steps. We first show that the magnetization on each group almost goes to zero in $O(n^{3/2})$ steps.
\begin{lemma}
We define $\tau_0 :=\min\{t\geq 0:\max_{1\leq i\leq m}\{|S_t^{(i)}|\}\leq 1/n \}$. Then there exist positive constant $c_1$ such that
\[
\prob_\sigma(\tau_0>(c_1+\gamma)n^{3/2})=O(\gamma^{-1/2}).
\]
\end{lemma}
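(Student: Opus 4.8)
The plan is to reduce everything to a one-dimensional chain for the \emph{critical mode} $N_t:=\ba^{\top}\bS_t$, where $\ba$ is the Perron left eigenvector from \eqref{const}. At $\beta=\beta_{cr}$ one has $\rho_n=1$, so $\ba^{\top}\mathbf{Q}=\ba^{\top}$; writing $\mathbf{v}>0$ for the corresponding Perron right eigenvector (normalised by $\ba^{\top}\mathbf{v}=1$) and using the drift identity from Remark~\ref{Rmk:UniformlyBdd}, the expansion $\tanh x=x-\tfrac13x^{3}+O(x^{5})$, and the eigen-identities $\ba^{\top}\mathbf{B}=\beta_{cr}^{-1}\ba^{\top}$, $\mathbf{K}\mathbf{v}=\beta_{cr}^{-1}\mathbf{P}^{-1}\mathbf{v}$, the linear part of the drift of $N_t$ cancels and, while $\bS_t$ stays near the ray through $\mathbf{v}$, one is left with
\[
\E[N_{t+1}-N_t\mid\mathcal{F}_t]=-\frac{c_\ast}{n}\,N_t^{3}+O(n^{-2}),\qquad c_\ast:=\frac13\sum_{i=1}^{m}\frac{a_i v_i^{3}}{p_i^{2}}>0 .
\]
The transverse part $\mathbf{R}_t:=\bS_t-N_t\mathbf{v}$ is linearly contracted by $\mathbf{Q}$ at rate $1-g/n$ with $g:=1-\lambda_{m-1}>0$, and since the nonlinear correction is $O(\|\bS_t\|^{3}/n)$ a bootstrap gives $\|\mathbf{R}_t\|=O(\|\bS_t\|^{3})$ after $O(n\log n)=o(n^{3/2})$ steps. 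Thus the problem becomes one-dimensional with cubic drift and per-step variance $\Theta(n^{-2})$, and (as then $\|\mathbf{R}_t\|\ll|N_t|$) the event $\{|N_t|\le c_4/n\}$ for a suitably small $c_4$ is contained in $\{\max_i|S_t^{(i)}|\le 1/n\}$, i.e.\ in $\{\tau_0\ \text{has occurred}\}$.

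\textbf{Step 1: the drift phase (down to scale $n^{-1/4}$).} Set $\tau_1:=\min\{t:\max_i|S_t^{(i)}|\le C_0 n^{-1/4}\}$. A standard Lyapunov (free-energy) estimate first brings $\|\bS_t\|$ below any fixed $\delta_0$ in $O(n)$ steps, so we may assume $\|\bS_0\|\le\delta_0$. Comparing $N_t^{-2}$ with the solution of $\dot x=2c_\ast/n$ (valid regardless of $N_0$) gives $|N_t|\le (2c_\ast t/n)^{-1/2}(1+o(1))$, hence $\E_\sigma[\,\max_i|S_t^{(i)}|\,]\le\tfrac12 C_0 n^{-1/4}$ once $t\ge c_1 n^{3/2}$ for suitable $c_1,C_0$; combined with $\Var_\sigma\|\bS_t\|_1=O(t/n^{2})=O(n^{-1/2})$ (Proposition~\ref{prop:VariBound}) and Chebyshev this yields $\prob_\sigma(\tau_1>c_1 n^{3/2})\le\tfrac12$ for every $\sigma$. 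Applying this over $\lceil\gamma/c_1\rceil$ consecutive blocks and using the Markov property upgrades it to $\prob_\sigma(\tau_1>\gamma n^{3/2})\le 2^{-\gamma/c_1}$.

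\textbf{Step 2: the diffusive phase (from $n^{-1/4}$ down to $1/n$).} Starting from a state with $\max_i|S^{(i)}|\le C_0 n^{-1/4}$, after an $O(n\log n)$ equilibration of $\mathbf{R}_t$, fix $\delta_n$ with $n^{-1/3}\ll\delta_n\ll n^{-1/4}$. On $\{|N_t|\ge\delta_n\}$ the cubic drift dominates the $O(n^{-2})$ error, so $W_t:=|N_t|-\delta_n$ is a nonnegative supermartingale with $W_0=O(n^{-1/4})$, increments $|W_{t+1}-W_t|\le B=O(1/n)$, and (the single-site flip probability being uniformly bounded away from $0$ by Remark~\ref{Rmk:UniformlyBdd}) conditional variance $\ge\sigma^{2}=\Theta(n^{-2})$ up to its hitting time of $0$; Lemma~\ref{Lem:SupMar} then bounds the probability that $|N_t|$ has not reached $\delta_n$ by time $u$ by $4W_0/(\sigma\sqrt u)=O(n^{3/4}/\sqrt u)$, which is $O(\gamma^{-1/2})$ at $u=\gamma n^{3/2}$. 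Once $|N_t|\le\delta_n$ the drift is $O(n^{-2})$, negligible against the $\Theta(n^{-2})$ fluctuations on this time scale, so a reflected-random-walk estimate (excursions above $2n^{-1/3}$ being pushed back by the cubic drift, i.e.\ a short restart of the supermartingale argument) shows $|N_t|$ reaches $c_4/n$ in a further $o(n^{3/2})$ steps except with probability $O(\gamma^{-1})$. By the reduction described above this is exactly $\tau_0$; splitting the time budget between Steps 1 and 2 and using the strong Markov property gives $\prob_\sigma(\tau_0>(c_1+\gamma)n^{3/2})\le 2^{-\gamma/(2c_1)}+O(\gamma^{-1/2})=O(\gamma^{-1/2})$.

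\textbf{Main obstacle.} Two points carry the real difficulty. First, the reduction to the one-dimensional critical mode: one must run the bootstrap that keeps $\|\mathbf{R}_t\|$ of order $\|\bS_t\|^{3}$ while $N_t$ itself is shrinking, so that the drift of $N_t$ is governed by the single, strictly positive coefficient $c_\ast$ (positivity coming from $\ba,\mathbf{v}>0$) uniformly along the run. Second, the final descent from scale $n^{-1/3}$ to $1/n$: there the cubic drift is no longer larger than the $O(n^{-2})$ noise floor, so this stretch is genuinely fluctuation-driven and must be extracted from the variance bound of Proposition~\ref{prop:VariBound} together with Lemma~\ref{Lem:SupMar}, not from any monotone drift. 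Carrying both through while keeping every error term uniform in $n$ is the technical core.
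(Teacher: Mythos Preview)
Your spectral decomposition into the critical mode $N_t=\ba^{\top}\bS_t$ and the transverse part $\mathbf{R}_t=\bS_t-N_t\mathbf{v}$ is a natural idea, and it is \emph{not} the route taken in the paper: there the authors track the scalar quadratic form $\mathbf{X}_t^{\top}\mathbf{P}\mathbf{X}_t$ (with $\mathbf{X}_t=\mathbf{K}\bS_t$), derive a recursion for $\eta_t:=\sum_i p_i(\E|X_t^{(i)}|)^2$ of the shape $\eta_{t+1}-\eta_t\le -\tfrac{\theta}{n}\eta_t^2+Cn^{-2}$ via Cauchy--Schwarz, Jensen and the spectral identity $\mathbf{I}-\Lambda\ge 0$, run a dyadic ODE comparison to reach $\eta_t=O(n^{-1/2})$ in $O(n^{3/2})$ steps, and only then invoke Lemma~\ref{Lem:SupMar} for the supermartingale $n^{5/4}\mathbf{X}_t^{\top}\mathbf{P}\mathbf{X}_t$ to finish. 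Their quantity is not mode-selective; it packages all $m$ directions at once.

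Your argument, however, contains a genuine gap in the bootstrap claim ``$\|\mathbf{R}_t\|=O(\|\bS_t\|^3)$ after $O(n\log n)$ steps'' and in its consequence ``$\{|N_t|\le c_4/n\}\subset\{\max_i|S_t^{(i)}|\le 1/n\}$''. The transverse modes satisfy a drift of the form $-\tfrac{g}{n}\mathbf{R}_t+O(\|\bS_t\|^3/n)$, but they also carry per-step noise of variance $\Theta(n^{-2})$. Since the linear contraction rate is $g/n$, their stationary standard deviation is $\Theta(n^{-1/2})$, exactly as in the high-temperature variance bound of Proposition~\ref{prop:VariBound}. Thus $\|\mathbf{R}_t\|$ is typically of order $n^{-1/2}$, not $O(\|\bS_t\|^3)$; your bootstrap captures only the deterministic slaving and forgets the fluctuations. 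Consequently, when $|N_t|\le c_4/n$ one has $\max_i|S_t^{(i)}|\asymp\|\mathbf{R}_t\|\asymp n^{-1/2}\gg 1/n$ with high probability, so the inclusion you rely on is false and Step~2 never reaches $\tau_0$.

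This is not a cosmetic issue: the event $\{\max_i|S_t^{(i)}|\le 1/n\}$ demands that the critical mode \emph{and} all $m-1$ transverse modes be simultaneously $O(1/n)$. Controlling $N_t$ alone, even perfectly, says nothing about the latter. To salvage the modal approach you would have to (i) separately bound the hitting time of the $(m-1)$-dimensional transverse process to the ball of radius $O(1/n)$ from its typical scale $n^{-1/2}$, and (ii) couple this with the critical-mode descent so that both hit simultaneously. That is a substantially different (and harder) argument than what you wrote. The paper sidesteps it precisely by never splitting the modes: the single supermartingale $\mathbf{X}_t^{\top}\mathbf{P}\mathbf{X}_t$ vanishes only when \emph{all} components are small, so Lemma~\ref{Lem:SupMar} applied to it targets $\tau_0$ directly.
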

\begin{proof}
From now on, we discuss on the event $\{t<\tau_0\}$. Recall the drift of $\bS_t$ from \eqref{eq:drift} that
\[
\E[S_{t+1}^{(i)}|\mathcal{F}_t]=\left(1-\frac{1}{n}\right)S_t^{(i)}+\frac{p_i}{n}\tanh(\beta X_t^{(i)})+O(\frac{1}{n^2}).
\]
Recall $X_t^{(i)}$ from \eqref{eq:def_X} to obtain $\Var[X_{t+1}^{(i)}|\mathcal{F}_t]\leq\E[(X_{t+1}^{(i)}-X_{t}^{(i)})^2|\mathcal{F}_t]=O(n^{-2})$ and
\begin{align*}
    \E[X_{t+1}^{(i)}|\mathcal{F}_t]&=\left(1-\frac{1}{n}\right)X_t^{(i)}+\frac{1}{n}\sum_{j=1}^m k_{ij}p_j\tanh(\beta X_t^{(j)}) +O(n^{-2}), \nonumber\\
    \E[(X_{t+1}^{(i)})^2|\mathcal{F}_t]
    &=(\E[X_{t+1}^{(i)}|\mathcal{F}_t])^2+\Var[X_{t+1}^{(i)}|\mathcal{F}_t] \nonumber\\
    &=\left(1-\frac{2}{n}\right)(X_t^{(i)})^2+\frac{2}{n}X_t^{(i)}\sum_{j=1}^m k_{ij}p_j\tanh(\beta X_t^{(j)}) +O(n^{-2}). \nonumber
\end{align*}
We compute the drift of $\mathbf{X}_{t}^{\top} {\mathbf{P}}\mathbf{X}_{t}=\sum_{i=1}^m p_i \left(X_t^{(i)}\right)^2$.
\begin{align}\label{eq:eq2}
    &\E[\mathbf{X}_{t+1} \mathbf{P} \mathbf{X}_{t+1}|\mathcal{F}_t]=\left(1-\frac{2}{n}\right)\mathbf{X}_{t}^{\top} \mathbf{P} \mathbf{X}_{t}+\frac{2}{n}\sum_{i=1}^m p_iX_t^{(i)}\sum_{j=1}^m k_{ij}p_j\tanh(\beta X_t^{(j)}) +O(n^{-2}),\nonumber\\
    &\E[\mathbf{X}_{t+1}^{\top} \mathbf{P} \mathbf{X}_{t+1}-\mathbf{X}_{t}^{\top} \mathbf{P} \mathbf{X}_{t}|\mathcal{F}_t]=-\frac{2}{n}\Big(\mathbf{X}_{t}^{\top} \mathbf{P} \mathbf{X}_{t}-\sum_{i=1}^m p_iX_t^{(i)}\sum_{j=1}^m k_{ij}p_j\tanh(\beta X_t^{(j)})\Big) +O(n^{-2}).
\end{align}
We take conditional expectation with respect to $\mathcal{F}_{t-1}$ on both sides of \eqref{eq:eq2}, and use tower property of conditional expectations to obtain
\begin{equation}\label{C0}
\begin{split}
    \E[\mathbf{X}_{t+1}^{\top} \mathbf{P} &\mathbf{X}_{t+1}-\mathbf{X}_{t}^{\top} \mathbf{P} \mathbf{X}_{t}|\mathcal{F}_{t-1}]\\
    &=-\frac{2}{n}\E[\mathbf{X}_{t}^{\top} \mathbf{P} \mathbf{X}_{t}|\mathcal{F}_{t-1}]+\frac{2}{n}\E\left[\sum_{i=1}^m p_iX_t^{(i)}\sum_{j=1}^m k_{ij}p_j\tanh(\beta X_t^{(j)})\Big|\mathcal{F}_{t-1}\right] +O(n^{-2}).
\end{split}
\end{equation}
Meanwhile, since the shift of $X^{(i)}$, i.e., $X_{t+1}^{(i)}-X_t^{(i)}$ is $O(n^{-1})$, the conditional variance of both $\Var(|X_t^{(i)}||\mathcal{F}_{t-1})$ and $\Var\left(\E[|X_t^{(i)}||\mathcal{F}_{t-1}]\right)$ are $O(n^{-2})$. Therefore, we have
\begin{align*}
\E[\bX_{t}^{\top} \mathbf{P}\bX_{t}|\mathcal{F}_{t-1}]
&=\E\left[\sum_{i=1}^m p_i (X_t^{(i)})^2\Big|\mathcal{F}_{t-1}\right]
=\sum_{i=1}^m p_i\left(\E[|X_t^{(i)}||\mathcal{F}_{t-1}]\right)^2+O(n^{-2}),\\
\E\left[\left(\E[|X_t^{(i)}||\mathcal{F}_{t-1}]\right)^2\right]
&=\left(\E\left[\E[|X_t^{(i)}||\mathcal{F}_{t-1}]\right]\right)^2+O(n^{-2})
=\left(\E[|X_t^{(i)}|]\right)^2+O(n^{-2}),
\end{align*}
which implies
\begin{align}\label{C3}
    \E[\bX_{t}^{\top} \mathbf{P}\bX_{t}]=\sum_{i=1}^m p_i\left(\E[|X_t^{(i)}|]\right)^2 +O(n^{-2}).
\end{align}
Now, we define
\[
\eta_t:=\sum_{i=1}^m p_i\left(\E[|X_t^{(i)}|]\right)^2.
\]
Then, we would observe the drift of $\eta$. Taking expectation on both sides of \eqref{C0} by applying \eqref{C3}, we obtain
\begin{equation}\label{CC}
    \eta_{t+1}-\eta_t=-\frac{2}{n}\eta_t
    +\E\left[\frac{2}{n}\E\left[\sum_{i=1}^m p_i X_t^{(i)}\sum_{j=1}^m k_{ij}p_j\tanh(\beta X_t^{(j)})\Big|\mathcal{F}_{t-1}\right]
    \right]+O(n^{-2}).
\end{equation}
Now we estimate the second term of the right-hand side. By Cauchy--Schwarz inequality for conditional expectations, we have
\begin{align*}
    \E\left[\sum_{i=1}^m p_iX_t^{(i)}\sum_{j=1}^m k_{ij}p_j\tanh(\beta X_t^{(j)})\Big|\mathcal{F}_{t-1}\right]
    &=\sum_{i,j=1}^m k_{ij}p_ip_j \E\left[X_t^{(i)}\tanh(\beta X_t^{(j)})\Big|\mathcal{F}_{t-1}\right],
\end{align*}
\begin{equation}\label{C1}
\begin{split}
    \E\left[X_t^{(i)}\tanh(\beta X_t^{(j)})\Big|\mathcal{F}_{t-1}\right]
    &\leq\E\left[|X_t^{(i)}||\tanh(\beta X_t^{(j)})|\Big|\mathcal{F}_{t-1}\right]\\
    &\leq \left(\E\left[|X_t^{(i)}|^2\Big|\mathcal{F}_{t-1}\right]\E\left[|\tanh(\beta X_t^{(j)})|^2\Big|\mathcal{F}_{t-1}\right]\right)^{1/2}\\
    &=\E\left[|X_t^{(i)}|\Big|\mathcal{F}_{t-1}\right]\E\left[|\tanh(\beta X_t^{(j)})|\Big|\mathcal{F}_{t-1}\right]+O(n^{-1}).
\end{split}
\end{equation}
The last equality holds from the similar process of variance bound of $O(n^{-2})$.
By taking expectation on both sides of \eqref{C1} with the Cauchy--Schwarz inequality, we have
\begin{equation}\label{C2}
\begin{split}
    \E\bigg[X_t^{(i)}\tanh &(\beta X_t^{(j)})\bigg]
    \leq \E\left[\E[|X_t^{(i)}|\Big|\mathcal{F}_{t-1}]\cdot\E[|\tanh(\beta X_t^{(j)})|\Big|\mathcal{F}_{t-1}]\right]+O(n^{-1})\\
    &\leq \left(\E\left[ \left(\E[|X_t^{(i)}|\Big|\mathcal{F}_{t-1}]\right)^2 \right]\E\left[\left( \E[|\tanh(\beta X_t^{(j)})|\Big|\mathcal{F}_{t-1}] \right)^2\right]\right)^{1/2}+O(n^{-1})\\
    &= \E[|X_t^{(i)}|]\cdot\E[|\tanh(\beta X_t^{(j)})|]+O(n^{-1}),
\end{split}
\end{equation}
and the last equality follows from $O(n^{-2})$ variance bound.
We now combine \eqref{CC} and the aforementioned estimation \eqref{C2}, to obtaine
\begin{align*}
    \eta_{t+1}-\eta_t
    &\leq -\frac{2}{n}\eta_t+\frac{2}{n}\sum_{i,j=1}^m k_{ij}p_ip_j \E\left[|X_t^{(i)}|\right]\E\left[\tanh(|\beta X_t^{(j)}|)\right] +O(n^{-2})\\
    &\leq -\frac{2}{n}\eta_t+\frac{2}{n}\sum_{i,j=1}^m k_{ij}p_ip_j \E\left[|X_t^{(i)}|\right]\cdot\tanh\left(\E\left[(|\beta X_t^{(j)}|)\right]\right) +O(n^{-2}).
\end{align*}
The last inequality comes from Jensen's inequality since the $\tanh$ function is concave on the positive axis. Now, we estimate the lower bound of the right-hand side. Suppose $\eta_t>mp\epsilon^2$ for some $\epsilon>0$, where $p:=\max_{1\leq i\leq m}\{ p_i\}$, which implies $\max_{1\leq i\leq m}\E\left[|X_t^{(i)}|\right]\geq\epsilon$ (wlog, we may assume $\E\left[|X_t^{(l)}|\right]\geq\epsilon$); then, there exists a positive constant $c_\epsilon>0$ such that 
\[
\E\left[\beta|X_t^{(l)}|\right]-\tanh\left(\E\left[\beta|X_t^{(l)}|\right]\right)\geq c_\epsilon.
\]
Finally, we define a constant $C_\epsilon:=\epsilon c_\epsilon\cdot\min_{1\leq i\leq m}\{p_i^2 k_{ii}\}.$ By direct computation, we get
\begin{align*}
    \eta_t-\sum_{i,j=1}^m k_{ij}p_ip_j &\E\left[|X_t^{(i)}|\right]\cdot\tanh\left(\E\left[(|\beta X_t^{(j)}|)\right]\right)\\
    &\geq \eta_t-\sum_{i,j=1}^m k_{ij}p_ip_j \E\left[|X_t^{(i)}|\right]\cdot\E\left[(|\beta X_t^{(j)}|)\right]+\sum_{i=1}^m k_{il}p_ip_l \E\left[|X_t^{(i)}|\right]\cdot c_\epsilon\\
    &\geq \eta_t-\sum_{i,j=1}^m k_{ij}p_ip_j \E\left[|X_t^{(i)}|\right]\cdot\E\left[(|\beta X_t^{(j)}|)\right]+k_{ll}p_lp_l \E\left[|X_t^{(l)}|\right]\cdot c_\epsilon\\
    &\geq \eta_t-\sum_{i,j=1}^m \beta k_{ij}p_ip_j \E\left[|X_t^{(i)}|\right]\cdot\E\left[(| X_t^{(j)}|)\right]+C_\epsilon.
\end{align*}
We define the vector of expectation of the absolute value by
\[
\tilde{\mathbf{X}}_t^{\top}:=\left(\E[|X_t^{(i)}|]\right)_{1\leq i\leq m},\quad \tilde{\mathbf{Y}}_t:=\mathbf{V}^{\top}\mathbf{D}\tilde{\mathbf{X}}_t.
\]
Then, the right-hand side is
\begin{align*}
    \eta_t-\beta \tilde{\mathbf{X}}_t^{\top} \mathbf{D}^2\mathbf{K}\mathbf{D}^2 \tilde{\mathbf{X}}_t+C_\epsilon
    =\tilde{\mathbf{X}}_{t}^{\top}\mathbf{D}(I-\beta\mathbf{D}\mathbf{K}\mathbf{D})\mathbf{D}\tilde{\mathbf{X}}_{t}+C_\epsilon
    =\tilde{\mathbf{Y}}_{t}^{\top}(\mathbf{I}-\Lambda)\tilde{\mathbf{Y}}_{t}+C_\epsilon
    \geq C_\epsilon,
\end{align*}
since all diagonal element of $\Lambda$ is less than or equal to 1. This means that if $\eta_t>\delta$, there exist a constant $C_\delta$ such that
\begin{align*}
    \eta_{t+1}-\eta_t \leq -\frac{C_\delta}{n} +O(n^{-2}),
\end{align*}
which implies that for any constant $\delta>0$, there exists a time $t_*=O(n)$ such that $\eta_t\leq \delta$ for all $t\geq t_*$. Since $\tanh{x}\leq x-\frac{x^3}{3}+\frac{2x^5}{15}$ for $x>0$, we have $\tanh{x}\leq x-\frac{x^3}{4}$ when $x\leq 1/4$. Therefore, there exist $t_*=O(n)$ such that for $t\geq t_*,$
\newpage
\begin{align*}
    \eta_t-\sum_{i,j=1}^m k_{ij}p_ip_j &\E\left[|X_t^{(i)}|\right]\cdot\tanh\left(\E\left[|\beta X_t^{(j)}|\right]\right)\\
    &\geq \eta_t-\sum_{i,j=1}^m k_{ij}p_ip_j \E\left[|X_t^{(i)}|\right]\cdot\left(\E[|\beta X_t^{(j)}|]-\left(\E[|\beta X_t^{(j)}|]\right)^3/4\right)\\
    &\geq 
    \sum_{i,j=1}^m k_{ij}p_ip_j\E[|X_t^{(i)}|]\left(\E[|\beta X_t^{(j)}|]\right)^3/4.
\end{align*}
We define 
\[
\xi:=\beta^3\cdot\min_{1\leq i,j\leq m}\{k_{ij}p_ip_j\}/4
\]
to obtain
\begin{align*}
    \sum_{i,j=1}^m k_{ij}p_ip_j & \E[|X_t^{(i)}|]\left(\E[|\beta X_t^{(j)}|]\right)^3/4
    \geq \xi\sum_{i,j=1}^m \E[|X_t^{(i)}|]\left(\E[|X_t^{(j)}|]\right)^3\\
    &\geq \xi\sum_{i,j=1}^m\left(\E[|X_t^{(i)}|]\right)^2\left(\E[|X_t^{(j)}|]\right)^2
    = \xi \left(\sum_{i=1}^m\left(\E[|X_t^{(i)}|]\right)^2\right)^2
    \geq \frac{\xi}{p^2} \eta_t^2,
\end{align*}
that yields
\begin{align*}
    \eta_{t+1}-\eta_{t}\leq -\frac{2\xi}{n p^2}\eta_t^2+O(n^{-2}),
\end{align*}
for $t\geq t_*$. We may rewrite this formula with a sufficiently large constant $C$ and $\theta:=\frac{2\xi}{p^2}$ as
\[
    \eta_{t+1}-\eta_{t}\leq -\frac{\theta}{n}\eta_t^2+\frac{C}{n^2},
\]
which shows that for sufficiently large $n$, $\eta_t$ is decreasing for $t\geq t_*$ whereas $\eta_t\geq \sqrt{C/\theta}n^{-1/2}$. Consider the decreasing sequence $l_i:=\delta/2^i$ $(i=0,1,2,\cdots)$ and define the time 
\[
\omega_i:=\min\{t\geq t_*:\eta_t\leq l_i\}.
\]
Then, for $t\in(\omega_i,\omega_{i+1}]$, we have $l_{i+1}\leq\eta_t<l_i$, thus,
\[
\eta_{t+1}\le\eta_t-\frac{\theta l_{i+1}^2}{n}+\frac{C}{n^2}=\eta_t-\frac{\theta l_{i}^2}{4n}+\frac{C}{n^2}.
\]
Hence, we obtain
\[
\omega_{i+1}-\omega_i\leq \frac{l_i/2}{\frac{\theta l_{i}^2}{4n}-\frac{C}{n^2}}=\frac{2n}{\theta l_i}\left(1+\frac{4C}{\theta n l_i^2 -4C}\right).
\]
We define 
\[
i_*:=\min\{i:l_i\leq c_1 n^{-1/2}\},
\]
for some constant $c_1 \geq 2\sqrt{4C/\theta}$. Then, for sufficiently large $n$ and $0\leq i<i_*$,
\[
\omega_{i+1}-\omega_i\leq \frac{8n}{3\theta l_i}.
\]
By summing up from $i=0$ to $i=i_*-1$, we have
\[
\omega_{i_*}-\omega_0\leq \frac{8n}{3\theta}\sum_{i=0}^{i_*-1}\frac{1}{l_i}\leq C' \frac{n}{l_{i_*}}=O(n^{3/2}).
\]
Therefore, we conclude
\[
\omega_{i_*}\leq O(n)+O(n^{3/2})=O(n^{3/2}),
\]
which implies that it takes $O(n^{3/2})$ steps to obtain $\eta_t\leq c_1 n^{-1/2}$. In other words, there exists a constant $\tilde{c}$ such that $\eta_t\leq c_1 n^{-1/2}$ for $t\geq \tilde{c}n^{3/2}=:u_n$. Recall from \eqref{C3} that $\E[\bX_{t}^{\top} \mathbf{P}\bX_{t}]=\eta_t +O(n^{-2})$, which implies that $\E[\bX_{t}^{\top} \mathbf{P}\bX_{t}]$ has the same properties as $\eta_t$. Hence, $\bX_{t}^{\top} \mathbf{P}\bX_{t}$ is non-negative supermartingale with bounded increment. Moreover, there exists a constant $q>0$ such that $r_+(s)\wedge r_-(s) \geq q$, on the event $\{t<\tau_0\}$, there exists a constant $B$ such that
\[\E[(n^{5/4}\mathbf{X}_{t+1}^{\top}\mathbf{P}\mathbf{X}_{t+1}-n^{5/4}\mathbf{X}_t^{\top}\mathbf{P}\mathbf{X}_t)^2|\mathcal{F}_t]\geq \frac{B}{n^{3/2}} .\]
Therefore by Lemma \ref{Lem:SupMar}, if $\gamma$ is sufficiently large,
\[
\prob_\sigma(\tau_0>u_n+\gamma n^{3/2}|\mathcal{F}_{u_n})\leq \frac{c_1|n^{5/4}\mathbf{X}_{u_n}^{\top}\mathbf{P}\mathbf{X}_{u_n}|}{\sqrt{\gamma n^{3/2}}}.
\]
By taking expectation, we have
\[
\prob_\sigma(\tau_0>u_n+\gamma n^{3/2})\leq \frac{c_1 n^{3/4}}{\sqrt{\gamma} n^{3/4}}=O(\gamma^{-1/2}).
\]
to complete the proof.
\end{proof}

Note that at $t=\tau_0$, if the size of $G_i$ is even, the magnetization $S^{(i)}_t$ is zero, otherwise the magnetization $|S^{(i)}_t|=\frac{1}{n}$. Therefore, by running the modified monotone coupling after $\tau_0$, we obtain that the probability that two magnetization chains couple in additional $m$ steps, i.e., in total $\tau_0+m$ steps, is uniformly bounded away from zero. Therefore, we have $\prob_\sigma(\tau_\text{mag}\leq \bar{C} n^{3/2})\geq \bar{c}$ for some positive constants $\bar{C}$ and $\bar{c}$. By combining this with Lemma \ref{Lem:CoupleSameMag}, we conclude that there exist positive constants $C$ and $c$ such that
\[
\prob_\sigma(\tau_c\leq {C} n^{3/2})\geq {c},
\]
to conclude that $t_\text{mix}=O(n^{3/2})$.

\subsection{Lower Bound}
In this subsection, we prove the lower bound of mixing time to clarify the optimal bound.
\begin{theorem}\label{Thm:critical_lower}
When $\beta = \beta_{cr}$, then there is a constant $C_1$ such that $t_{\text{mix}}\geq C_1 n^{3/2}$.
\end{theorem}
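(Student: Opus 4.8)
The plan is to run the dynamics from the all-plus start $\bone$ and to follow the one–dimensional statistic attached to the critical eigendirection, namely
\[
\phi(\sigma):=\sum_{i=1}^{m}\frac{V_{im}}{\sqrt{p_i}}\,S^{(i)}(\sigma)=n^{-1/4}U_n^{(m)}(\sigma),
\]
where $\mathbf{v}_m=(V_{im})_i$ is the Perron--Frobenius (all-positive) top eigenvector of $\beta\mathbf{D}\mathbf{K}\mathbf{D}$. By the spin–flip symmetry of $\mu_n$ one has $\E_{\mu_n}\phi=0$, and Proposition \ref{3.3} (with a routine uniform-integrability estimate, or a direct bound on $\Var_{\mu_n}S^{(i)}$) gives $\Var_{\mu_n}\phi=n^{-1/2}\Var_{\mu_n}U_n^{(m)}=O(n^{-1/2})$; meanwhile $\phi(\bone)=\sum_i V_{im}\sqrt{p_i}>0$ is a fixed positive constant. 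By Proposition \ref{prop:VariBound} (the bound $\Var_{\bs}\|\bS_t\|_1=O(t/n^2)$ at $\beta=\beta_{cr}$, together with the analogous bound for the linear form $\phi$) we also have $\Var_{\bone}\phi_t=O(t/n^2)=O(n^{-1/2})$ for $t\le C_1n^{3/2}$. Hence, once we show that
\[
\E_{\bone}\phi_{C_1 n^{3/2}}\ \ge\ A\,n^{-1/4}\qquad\text{with }A\to\infty\text{ as }C_1\downarrow 0,
\]
feeding $f=\phi$, $\nu_1=\prob_{\bone}(\sigma_{C_1n^{3/2}}\in\cdot)$, $\nu_2=\mu_n$ into Lemma \ref{lem:lower} yields $d_n(C_1n^{3/2})\ge 1-O(A^{-2})>1/4$ for $C_1$ small, proving $t_{\mathrm{mix}}\ge C_1n^{3/2}$.

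The core of the argument is the drift of $\phi_t:=\phi(\sigma_t)$. Starting from $\E[S_{t+1}^{(i)}-S_t^{(i)}\mid\mathcal F_t]=\frac1n\big(-S_t^{(i)}+p_i\tanh(\beta X_t^{(i)})\big)+O(n^{-2})$ with $X_t^{(i)}=(\mathbf{K}\bS_t)_i$ (see \eqref{eq:drift}) and expanding $\tanh x=x-\tfrac{x^3}{3}+O(x^5)$, the identity $\beta\mathbf{K}\mathbf{D}\mathbf{v}_m=\mathbf{D}^{-1}\mathbf{v}_m$ (equivalently, $\mathbf{D}^{-1}\mathbf{v}_m$ is the left $1$-eigenvector of $\beta\mathbf{B}=\beta\mathbf{D}^2\mathbf{K}$) makes the \emph{linear} contribution to $\E[\phi_{t+1}-\phi_t\mid\mathcal F_t]$ vanish identically, leaving
\[
\E[\phi_{t+1}-\phi_t\mid\mathcal F_t]=-\frac{\beta^3}{3n}\sum_{i=1}^m V_{im}\sqrt{p_i}\,(X_t^{(i)})^3+\frac1n\,O(\|\bS_t\|_\infty^5)+O(n^{-2}).
\]
A burn-in phase of $O(n)$ steps — exactly the $\eta_t$ analysis from the proof of the upper bound — makes $\E\|\bS_t\|_\infty$, hence $\E\|\bX_t\|_\infty$ and $w_t:=\E_\bone\phi_t$, small, so the quintic remainder is negligible against the cubic term; and since $\mathbf{K}$ is positive definite the subcritical eigendirections $\langle\mathbf{D}^{-1}\mathbf{v}_j,\bS_t\rangle$ $(j<m)$ contract at the geometric rate $(1-\beta\lambda_j)/n>0$ by Lemma \ref{lemma:MagCont}, so after $O(n\log n)$ steps $\bS_t$ equals $\phi_t\,\mathbf{D}\mathbf{v}_m$ up to an error that is small compared with $\phi_t$ on the whole range $t\le C_1n^{3/2}$. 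Substituting this and using $(\mathbf{K}\mathbf{D}\mathbf{v}_m)_i=\beta^{-1}V_{im}/\sqrt{p_i}$ collapses the cubic term to $-\tfrac1n\kappa'\phi_t^3$ with $\kappa':=\tfrac13\sum_i V_{im}^4/p_i>0$ — the very coefficient appearing in the limiting density of Proposition \ref{3.3}. Because $\Var_\bone\phi_t=O(t/n^2)$ is small, $\phi_t$ concentrates around $w_t$ and $\E[\phi_t^3]=w_t^3+(\text{lower order})$, so taking expectations produces the deterministic recursion
\[
w_{t+1}\ \ge\ w_t-\frac{\kappa'}{n}\,w_t^3-\varepsilon_t ,
\]
valid for $t$ past the burn-in, where the errors $\varepsilon_t$ can be shown to sum to $o\!\big(n^{-1/4}\big)$ over $t\le C_1n^{3/2}$ once $C_1$ is taken small.

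Finally one solves the recursion. Ignoring the errors, $w_t$ dominates the solution of $\dot w=-\tfrac{\kappa'}{n}w^3$, i.e.\ $w_t\asymp (2\kappa' t/n)^{-1/2}$ for $t\gg n$, and a standard induction that absorbs the accumulated $\varepsilon_t$ turns this into a genuine lower bound $w_t\ge \tfrac12(2\kappa' t/n)^{-1/2}$ for $n\log n\le t\le C_1n^{3/2}$. Evaluating at $t=C_1n^{3/2}$ gives $w_{C_1n^{3/2}}\ge\tfrac12(2\kappa' C_1)^{-1/2}n^{-1/4}$, so $A=\tfrac12(2\kappa' C_1)^{-1/2}\to\infty$ as $C_1\downarrow0$, which is precisely what the separation step in the first paragraph requires. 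The main obstacle is the middle step: establishing quantitatively, and uniformly over $t\le C_1n^{3/2}$, that the subcritical coordinates of $\bS_t$ and the fluctuations of $\phi_t$ around its mean contribute only errors $\varepsilon_t$ that are summably small relative to the $n^{-1/4}$ scale — this is where one needs the strict gap $\lambda_{m-1}<1$ (so the burn-in is truly geometric), the positive definiteness of $\mathbf{K}$, and the variance bound $\Var_\bone\|\bS_t\|_1=O(t/n^2)$.
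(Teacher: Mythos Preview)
Your approach is genuinely different from the paper's, and the difference is instructive. You start from $\bone$ and follow the mean $w_t=\E_\bone\phi_t$ through the cubic recursion $w_{t+1}\ge w_t-\tfrac{\kappa'}{n}w_t^3-\varepsilon_t$, then invoke Lemma~\ref{lem:lower} to separate the law at time $C_1n^{3/2}$ from $\mu_n$. The paper never attempts to track the long transient from $w_0=O(1)$ down to $n^{-1/4}$. Instead it chooses a starting magnetization $\bS_0$ already at the target scale, with $F_m(\bS_0)=2A_mn^{-1/4}$ and $F_j(\bS_0)=2A_jn^{-1/2}$ for $j<m$, confines the chain to a box of this size via a reflecting modification $\tilde\bS_t$, and bounds the second moment of the displacement $Z_t=\tilde U_0-\tilde U_{t\wedge\tau}$. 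Because the chain stays at scale $n^{-1/4}$, the drift of $\tilde U_t$ is $O(n^{-1}\cdot n^{-3/4})=O(n^{-7/4})$ and the one-step variance is $O(n^{-2})$, so $\E[Z_{t+1}^2\mid\mathcal F_t]\le Z_t^2+cn^{-2}$ and hence $\E[Z_t^2]\le ct/n^2$; since $Z_\tau\ge A_mn^{-1/4}$, one gets $\prob(\tau\le t)\le ct/(A_m^2n^{3/2})$ in one line, and then compares with $\mu(\Upsilon_m)\ge\tfrac23$ from Proposition~\ref{3.3}.

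The paper's route sidesteps precisely the step you flag as the main obstacle. By starting at the $n^{-1/4}$ scale with the subcritical coordinates already at $O(n^{-1/2})$, there is no burn-in, no cubic ODE to integrate across scales, and no cross-moments between $\phi_t$ and the subcritical projections to control: everything reduces to a second-moment bound for a bounded-increment process with negligible drift. Your plan, by contrast, requires a uniform-in-$t$ error analysis over the whole window $[0,C_1n^{3/2}]$, including (i) the decay of $\langle\mathbf D^{-1}\mathbf v_j,\bS_t\rangle$ for $j<m$ and its interference with the cubic term at intermediate times where $w_t$ is still $O((\log n)^{-1/2})$, (ii) the replacement $\E[(X_t^{(i)})^3]\approx w_t^3\cdot(\text{const})$, which involves correlations with the subcritical directions and is not controlled by the variance bound alone, and (iii) a uniform-integrability argument to pass from the weak convergence in Proposition~\ref{3.3} to $\Var_{\mu_n}\phi=O(n^{-1/2})$, which Lemma~\ref{lem:lower} needs. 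None of these is carried out, so as written the proposal is a plausible sketch with a real gap in the error control; the paper's hitting-time argument is complete in a few lines once Proposition~\ref{3.3} is in hand and uses only the event $\{\mu(\Upsilon_m)\ge\tfrac23\}$, not any moment of $\mu_n$.
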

To this end, we employ the Hubbard--Stratonovich transform to verify that the magnetization satisfies a specific limit theorem. We recall the matrices $\mathbf{\Gamma}$, $\mathbf{C}_n$, ${{\bm{U}}}(\sigma)$ from \eqref{eq:def_matrix2}. By direct calculation, we have
\begin{align*}
    H(\sigma)&
    =-\frac{\beta}{2n}\langle{{\bm{M}}}(\sigma),\mathbf{K}{{\bm{M}}}(\sigma)\rangle\\
    &=-\frac{\beta}{2n} {{\bm{M}}}^{\top}\mathbf{D}^{-1} (\mathbf{D} \mathbf{K}\mathbf{D} ) \mathbf{D}^{-1}{{\bm{M}}}
    =-\frac{1}{2n} {{\bm{M}}}^{\top}\mathbf{D}^{-1} \mathbf{V}\Lambda \mathbf{V}^{\top} \mathbf{D}^{-1}{{\bm{M}}}\\
    &=-\frac{1}{2} {{\bm{U}}}^{\top} \mathbf{C}_n {{\bm{U}}} =-\frac{1}{2}\langle \mathbf{C}_n{{\bm{U}}}(\sigma),{{\bm{U}}}(\sigma)\rangle.
\end{align*}
We claim that for $\bm{Y}_n\sim\mathcal{N}(0,\mathbf{C}_n^{-1})$, at the critical temperature, ${{\bm{U}}}_n+\bm{Y}_n$ converges to the non-trivial distribution when ${{\bm{U}}}_n$ is under the Gibbs measure $\mu_n$.
\begin{proposition}\label{Prop:Non-CLT}
Let $\bm{Y}_n\sim\mathcal{N}(0,\mathbf{C}_n^{-1})$. When $\beta=\beta_{cr}$, $\mu_n^{-1}(\bm{U}_n)+\bm{Y}_n$ converges in  distribution to a probability measure with density
\begin{align*}
    f(\bx):=\frac{1}{\tilde{Z}}\exp{\left(-\frac{1}{2}\displaystyle\sum_{i=1}^{m-1}{\Big((\lambda_i - \lambda_i^2) x_i^2\Big)}
-\frac{x_m^4}{12}\displaystyle\sum_{i=1}^{m}\frac{(V_{im})^4}{p_i}\right)}d\bx,
\end{align*}
where $\tilde{Z}$ is a normalizing constant.
\end{proposition}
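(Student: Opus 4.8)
The plan is a Hubbard--Stratonovich (Gaussian linearization) argument that turns the law of $\bm{U}_n+\bm{Y}_n$ into an $m$-dimensional integral with a one-site product integrand, followed by a Laplace-type analysis in which the criticality $\lambda_m=1$ forces an exact cancellation of the leading quadratic term in the $m$-th coordinate. From the identity $H(\sigma)=-\tfrac12\langle\mathbf{C}_n\bm{U}(\sigma),\bm{U}(\sigma)\rangle$ recorded above, $\mu_n(\sigma)\propto\exp\!\big(\tfrac12\langle\mathbf{C}_n\bm{U}(\sigma),\bm{U}(\sigma)\rangle\big)$ with $\mathbf{C}_n\succ0$. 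I would consider the joint probability measure on $\Omega\times\mathbb{R}^m$ proportional to $\exp\!\big(-\tfrac12\langle\mathbf{C}_n\bm{w},\bm{w}\rangle+\langle\mathbf{C}_n\bm{w},\bm{U}(\sigma)\rangle\big)$ against the product of counting and Lebesgue measure. Completing the square in $\bm{w}$ shows its $\sigma$-marginal is $\mu_n$ and that, conditionally on $\sigma$, $\bm{w}\sim\mathcal{N}(\bm{U}(\sigma),\mathbf{C}_n^{-1})$, so the $\bm{w}$-marginal is exactly $\mathcal{L}(\bm{U}_n+\bm{Y}_n)$. On the other hand $\langle\mathbf{C}_n\bm{w},\bm{U}(\sigma)\rangle=\langle\mathbf{D}^{-1}\mathbf{V}\mathbf{\Gamma}_n\mathbf{C}_n\bm{w},\bm{M}(\sigma)\rangle$ is a linear functional of the spins, so summing over $\sigma$ factorizes site by site; this shows that $\bm{U}_n+\bm{Y}_n$ has Lebesgue density
\[
g_n(\bm{w})=\frac{1}{\widehat{Z}_n}\exp\!\Big(-\tfrac12\langle\mathbf{C}_n\bm{w},\bm{w}\rangle+\sum_{i=1}^{m}np_i\log\cosh v_i(\bm{w})\Big),\qquad v_i(\bm{w})=\tfrac{1}{\sqrt{p_i}}\Big(\tfrac{1}{\sqrt{n}}\sum_{j=1}^{m-1}V_{ij}\lambda_j w_j+\tfrac{1}{n^{1/4}}V_{im}w_m\Big).
\]

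With $\bm{w}$ fixed I then expand $\log\cosh v=\tfrac{v^2}{2}-\tfrac{v^4}{12}+O(v^6)$. The quadratic part $-\tfrac12\langle\mathbf{C}_n\bm{w},\bm{w}\rangle+\tfrac{n}{2}\sum_i p_iv_i(\bm{w})^2$ can be evaluated exactly using orthonormality of the columns of $\mathbf{V}$: the mixed $w_jw_m$ terms cancel since $\sum_i V_{ij}V_{im}=0$ for $j<m$, the $w_m^2$ terms cancel since $\sum_i V_{im}^2=1$ and $\lambda_m=1$ (this is precisely where criticality enters), and what remains is $-\tfrac12\sum_{j=1}^{m-1}(\lambda_j-\lambda_j^2)w_j^2$, a genuine Gaussian because $0<\lambda_j<1$ for $j<m$ by positive-definiteness of $\mathbf{K}$. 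The quartic part $-\tfrac{n}{12}\sum_i p_iv_i(\bm{w})^4$ is dominated by its $w_m^4$ contribution $-\tfrac{w_m^4}{12}\sum_i V_{im}^4/p_i$, while the remaining terms (mixed quartic terms and the $O(v_i^6)$ remainders) are $O(n^{-1/4})$ uniformly for $\bm{w}$ in a compact set. Hence $\widehat{Z}_n\,g_n(\bm{w})\to\exp\!\big(-\tfrac12\sum_{j=1}^{m-1}(\lambda_j-\lambda_j^2)w_j^2-\tfrac{w_m^4}{12}\sum_i V_{im}^4/p_i\big)$, uniformly on compacts.

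To conclude I would invoke Scheffé's lemma: each $g_n$ is a probability density and the limit $f$ of the statement is as well, so it suffices to show $\widehat{Z}_n\to\widetilde{Z}$; Fatou applied to the pointwise convergence above already gives $\liminf\widehat{Z}_n\ge\widetilde{Z}$, and the remaining task is to show that $\int_{\{|\bm{w}|>R\}}\widehat{Z}_n g_n$ is uniformly (in $n$) small for $R$ large. Here the $w_1,\dots,w_{m-1}$ directions are harmless, since $\log\cosh v\le\tfrac{v^2}{2}$ reproduces the exact computation above and yields the $n$-independent majorant $-\tfrac12\sum_{j<m}(\lambda_j-\lambda_j^2)w_j^2$ in those variables. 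For $w_m$ I would split into three regimes. When $|w_m|$ is bounded by a large fixed multiple of $n^{1/4}$, use $\log\cosh v\le\tfrac{v^2}{2}-cv^4$ (valid for bounded $v$) together with the weighted power-mean bound $\sum_i p_iv_i^4\ge(\sum_i p_iv_i^2)^2\ge n^{-1}w_m^4$ to gain an extra penalty $-cw_m^4$. When $|w_m|$ exceeds a large fixed multiple of $n^{1/4}$, use the crude bound $\log\cosh v\le|v|$, so that $\sum_i np_i\log\cosh v_i(\bm{w})=O(n^{3/4}|w_m|)$ is dominated by $-\tfrac12 n^{1/2}w_m^2$, producing a factor $\exp(-cn^{1/2}w_m^2)$ that, after absorbing the other variables and the remaining constant into $e^{O(n)}$, makes the tail integral exponentially small once $R$ is large.

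The hard part will be the matching of the two $w_m$-regimes at the intermediate scale $|w_m|\asymp n^{1/4}$: there the Gaussian penalty $\tfrac12 n^{1/2}w_m^2$ and the energy $\sum_i np_i\log\cosh v_i(\bm{w})$ are simultaneously of order $n$, and neither elementary bound is sharp enough; one must instead use the strict concavity of the single-site pressure $t\mapsto\sum_i p_i\log\cosh\!\big(\tfrac{V_{im}}{\sqrt{p_i}}t\big)$ relative to $\tfrac12 t^2$ (whose equality case at $t=0$ is again forced by $\sum_i V_{im}^2=1$ and positive-definiteness of $\mathbf{K}$) to extract a uniform quantitative gap, and then a Laplace/Varadhan-type estimate on the rescaled variable $w_m=n^{1/4}u$. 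Keeping the Taylor remainders and these comparisons uniform across the three regimes, and likewise controlling the joint ranges of $(w_1,\dots,w_{m-1})$ and $w_m$, is where most of the technical effort goes.
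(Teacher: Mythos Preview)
Your proposal is correct and follows essentially the same Hubbard--Stratonovich route as the paper: derive the explicit density via the Gaussian linearization, Taylor-expand $\log\cosh$, and use orthonormality of $\mathbf{V}$ together with $\lambda_m=1$ to cancel the quadratic $w_m^2$ term and isolate the quartic. One packaging difference worth noting: the paper handles what you flag as the hard intermediate scale $|w_m|\asymp n^{1/4}$ by passing to the rescaled exponent $\tilde{\Phi}_n(\tilde{\bm{x}}):=n^{-1}\Phi_n(\bm{x})$ in the variables $\tilde{x}_j=n^{-1/2}x_j$ ($j<m$), $\tilde{x}_m=n^{-1/4}x_m$, so that the troublesome region becomes simply $\{\|\tilde{\bm{x}}\|_\infty\geq r\}$, on which $\tilde{\Phi}_n$ (now $n$-free) has a strictly positive infimum by uniqueness of its global minimum at $\bm{0}$---this gives $e^{-cn}$ decay of the outer integral directly and avoids the three-regime case analysis you anticipate.
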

Then, we can ascertain the limit of the distribution of ${{\bm{U}}}_n$ by the following lemma which holds because the weak convergence of measures is equivalent to pointwise convergence of characteristic functions.
\begin{lemma}
Suppose that for each $n$, $F_n$ and $G_n$ are independent random variables such that $F_n\rightarrow\nu$, where $\int e^{iax}d\nu(x)\neq 0$ for all $a\in\mathbb{R}$. Then, $G_n\rightarrow \mu$ if and only if $F_n+G_n\rightarrow\nu *\mu$.
\end{lemma}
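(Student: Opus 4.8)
The plan is to translate both implications into statements about characteristic functions and then invoke Lévy's continuity theorem twice, using that the characteristic function of a sum of independent variables is the product of the characteristic functions. Write $\hat F_n,\hat G_n$ for the characteristic functions of $F_n,G_n$, and $\hat\nu,\hat\mu$ for those of $\nu,\mu$; by independence the characteristic function of $F_n+G_n$ is $\hat F_n\hat G_n$, and $\hat\nu\hat\mu$ is the characteristic function of $\nu*\mu$.

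For the ``only if'' direction, I would assume $G_n\rightarrow\mu$. Combined with the hypothesis $F_n\rightarrow\nu$, Lévy's continuity theorem gives $\hat F_n(a)\to\hat\nu(a)$ and $\hat G_n(a)\to\hat\mu(a)$ for every $a\in\mathbb R$, hence $\hat F_n(a)\hat G_n(a)\to\hat\nu(a)\hat\mu(a)$ pointwise. Since $\hat\nu\hat\mu$ is the characteristic function of the probability measure $\nu*\mu$ (in particular continuous at $0$), the converse half of Lévy's theorem yields $F_n+G_n\rightarrow\nu*\mu$. Note this direction does not use the non-vanishing hypothesis on $\hat\nu$.

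For the ``if'' direction, I would assume $F_n+G_n\rightarrow\nu*\mu$, so that $\hat F_n(a)\hat G_n(a)\to\hat\nu(a)\hat\mu(a)$, while still $\hat F_n(a)\to\hat\nu(a)$, for each $a$. Fix $a$. Because $\hat\nu(a)\neq 0$, there is $N(a)$ with $\hat F_n(a)\neq 0$ for $n\ge N(a)$, and then
\[
\hat G_n(a)=\frac{\hat F_n(a)\hat G_n(a)}{\hat F_n(a)}\longrightarrow\frac{\hat\nu(a)\hat\mu(a)}{\hat\nu(a)}=\hat\mu(a).
\]
Thus $\hat G_n\to\hat\mu$ pointwise on $\mathbb R$; since $\mu$ is a probability measure, $\hat\mu$ is continuous at $0$, and Lévy's continuity theorem gives $G_n\rightarrow\mu$.

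The argument is essentially bookkeeping, so there is no real obstacle; the only points requiring care are that in each application of Lévy's theorem the limiting function is genuinely a characteristic function (automatic, since $\mu$, $\nu$, and $\nu*\mu$ are honest probability measures), and that the non-vanishing of $\hat\nu$ is precisely what licenses the pointwise division in the ``if'' direction — without it the deconvolution step can fail.
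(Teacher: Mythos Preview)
Your proof is correct and follows exactly the approach indicated in the paper, which merely remarks that the lemma ``holds because the weak convergence of measures is equivalent to pointwise convergence of characteristic functions.'' You have simply filled in the routine details of that remark via L\'evy's continuity theorem and the multiplicativity of characteristic functions under independence.
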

\begin{remark}
Note that the last element of $Y_n$ does not contribute to the limit of ${{\bm{U}}}_n+\bm{Y}_n$. Therefore, $\Big(({U}_n)^{(j)}\Big)_{j=1,\cdots,m-1}$ converges to a normal distribution with covariance matrix $\mathbf{\Sigma}$ defined in \eqref{eq:Sigma} and the random variable $U_n^{(m)}$ converges to a distribution with Lebesgue-density $Z^{-1}\exp(-(\frac{1}{12}\sum_{i=1}^{m}(V_{im})^4/p_i)x^4)dx$.
\end{remark}
\begin{proof}[Proof of Proposition \ref{Prop:Non-CLT}]
We define $\mu_0$ to indicate the uniform measure, which means that there is no interaction (i.e. $\mathbf{J}_n\equiv 0$). Under the uniform measure,
\begin{align*}
\mathbb{E}_{\mu_0}\exp{(\langle{\bm{u}},{{\bm{M}}}\rangle)}
=\prod_{i=1}^m{\frac{1}{2^{|G_i|}}\sum_{s=-|G_i|}^{|G_i|} {\binom{|G_i|}{s}}e^{u_i s}e^{-u_i (|G_i|-s)} }
=\prod_{i=1}^m{\cosh{u_i}}^{|G_i|}.
\end{align*}
Thus, we have for any Borel set $B\in\mathcal{B}(\mathbb{R}^m)$,
\begin{align*}
    \prob({{\bm{U}}}_n+\bm{Y}_n\in B)
    &=\sum_{\sigma\in\Omega_n}\mu_n(\sigma)\mathcal{N}(0,\mathbf{C}_n^{-1})(B-\bm{u}(\sigma))\\
    &=Z_0\sum_{\sigma\in\Omega_n}\mu_n(\sigma)\int_{B}\exp\left( -\frac{1}{2}\langle \mathbf{C}_n (\bx -\bm{u}(\sigma)), \bx -\bm{u}(\sigma)\rangle  \right)d\bx \\
    &=Z_1\int_B \text{exp}\left( -\frac{1}{2}\langle \mathbf{C}_n\bx , \bx \rangle  \right)\mathbb{E}_{\mu_0}\text{exp}\left( \langle \bx ,\mathbf{C}_n\bm{u}\rangle  \right) d\bx ,
\end{align*}
where $Z_0$ and $Z_1$ are normalizing constant. Considering that the inner product $\langle \bx ,\mathbf{C}_n\bm{u}\rangle$ equals $\langle \bx ,\mathbf{C}_n\mathbf{\Gamma} \mathbf{V}^\top \mathbf{D}^{-1}\bm{m}\rangle= \langle \mathbf{D}^{-1}\mathbf{V}\mathbf{\Gamma} \mathbf{C}_n\bx ,\bm{m}\rangle$, the right-hand side equals
\begin{align*}
    Z_1\int_B \text{exp}&\left( -\frac{1}{2}\langle \mathbf{C}_n\bx , \bx \rangle  \right)\mathbb{E}_{\mu_0}\text{exp}\left( \langle \mathbf{D}^{-1}\mathbf{V} \mathbf{\Gamma} \mathbf{C}_n\bx ,\bm{m}\rangle  \right) d\bx \\
    &=Z_1\int_B \text{exp}\left( -\frac{1}{2}\langle \mathbf{C}_n\bx , \bx \rangle  \right) \cdot \text{exp}\left( \displaystyle\sum_{i=1}^{m}{np_i\cdot\log\cosh{\left((\mathbf{D}^{-1}\mathbf{V} \mathbf{\Gamma} \mathbf{C}_n\bx )_i\right)}} \right) d\bx.
\end{align*}
The right-hand side equals $Z_1\int_B \text{exp}(-\Phi_n(\bx)) d\bx$, where
\begin{align}\label{eq:Phi}
\Phi_n(\bx):= \frac{1}{2}\langle \mathbf{C}_n\bx, \bx\rangle  -\displaystyle\sum_{i=1}^{m}{np_i\cdot\log\cosh{\left((\mathbf{D}^{-1}\mathbf{V} \mathbf{\Gamma} \mathbf{C}_n\bx)_i\right)}}.
\end{align}
Now we use Taylor expansion of $\log \cosh$ as
\[
\log\cosh(x)=\frac{x^2}{2}-\frac{x^4}{12}+O(x^6),
\]
to decompose the right-hand side of \eqref{eq:Phi} by
\begin{align*}
    \bigg( \frac{1}{2}\langle \mathbf{C}_n\bx, \bx\rangle -\frac{n}{2}\displaystyle\sum_{i=1}^{m}&{p_i {\left((\mathbf{D}^{-1}\mathbf{V} \mathbf{\Gamma} \mathbf{C}_n\bx)_i\right)}^2}\bigg)\\
    &+\left(\frac{n}{12}\displaystyle\sum_{i=1}^{m}{\frac{1}{p_i}{\left((\mathbf{V} \mathbf{\Gamma} \mathbf{C}_n\bx)_i\right)}^4}\right)
    +\left(\displaystyle\sum_{i=1}^{m}{np_i \cdot O((\mathbf{D}^{-1}\mathbf{V} \mathbf{\Gamma} \mathbf{C}_n\bx)_i^6)}\right),
\end{align*}
and define each term by $\mathcal{I}_1$, $\mathcal{I}_2$, and $\mathcal{I}_3$ respectively. Here, we calculate each $\mathcal{I}_j.$\\
\noindent$\bullet$ ($\mathcal{I}_1$) : Since
\begin{align*}
    \sum_{i=1}^{m}{p_i {\left((\mathbf{D}^{-1}\mathbf{V} \mathbf{\Gamma} \mathbf{C}_n\bx)_i\right)}^2}
    =\sum_{i=1}^{m}{\left((\mathbf{V} \mathbf{\Gamma} \mathbf{C}_n\bx)_i\right)}^2
    =\sum_{i=1}^{m}{\left(( \mathbf{\Gamma} \mathbf{C}_n\bx)_i\right)}^2,
\end{align*}
we have
\begin{align*}
    \mathcal{I}_1
    =\frac{1}{2}\langle \mathbf{C}_n\bx, \bx\rangle  -\frac{n}{2}\displaystyle\sum_{i=1}^{m}{\left(( \mathbf{\Gamma} \mathbf{C}_n\bx)_i\right)}^2
    =\frac{1}{2}\displaystyle\sum_{i=1}^{m-1}{\Big((\lambda_i - \lambda_i^2) x_i^2\Big)}.
\end{align*}
\noindent$\bullet$ ($\mathcal{I}_2$) : By direct calculation, we have
\[
n^{1/4}(\mathbf{V} \mathbf{\Gamma} \mathbf{C}_n\bx)_i
= \langle \bm{V}_i, (n^{-1/4}\lambda_1x_1,\cdots,n^{-1/4}\lambda_{m-1}x_{m-1},x_m)\rangle,
\]
where $\bm{V}_i$ is the $i$th row of $\mathbf{V}$, to obtain
\begin{align*}
    \mathcal{I}_2
    &=\frac{1}{12}\displaystyle\sum_{i=1}^{m}\frac{1}{p_i}{\left(\langle \bm{V}_i, (n^{-1/4}\lambda_1x_1,\cdots,n^{-1/4}\lambda_{m-1}x_{m-1},x_m)\rangle \right)}^4\\
    &=\frac{1}{12}\displaystyle\sum_{i=1}^{m}\frac{1}{p_i}{\left(O(n^{-1/4})+V_{im}x_m)\right)}^4
    =\frac{1}{12}\displaystyle\sum_{i=1}^{m}\frac{(V_{im}x_m)^4}{p_i}+O(n^{-1/4}).
\end{align*}
\noindent$\bullet$ ($\mathcal{I}_3$) : Observe that
\begin{align*}
    \mathcal{I}_3=n\displaystyle\sum_{i=1}^{m}{\frac{1}{p_i^2}\cdot O((\mathbf{V} \mathbf{\Gamma} \mathbf{C}_n\bx)_i^6)}
.\end{align*}
Hence, we obtain
\begin{align*}
\Phi_n(\bx)
=\frac{1}{2}\displaystyle\sum_{i=1}^{m-1}{\Big((\lambda_i - \lambda_i^2) x_i^2\Big)}
+\frac{1}{12}\displaystyle\sum_{i=1}^{m}\frac{(V_{im}x_m)^4}{p_i}+O(n^{-1/4})+n\sum_{i=1}^{m}{\frac{1}{p_i^2}\cdot O((\mathbf{V} \mathbf{\Gamma} \mathbf{C}_n\bx)_i^6)}.
\end{align*}
We split $\mathbb{R}^m$ into three regions, namely the inner region $T_1=B_R(0)$, the intermediate region $T_2=F_r/B_R(0)$ and the outer region $T_3=F_r^c$, where
\[
F_r:=\left\{\bx\in\mathbb{R}^m : \Vert(n^{-1/2}x_1,\cdots,n^{-1/2}x_{m-1},n^{-1/4}x_m)\Vert_\infty\leq r\right\},
\]
for arbitrary $R>0$ and $r>0$. Now, we calculate the integration in each region. We claim that only the inner region affects the integration and the integration vanishes at the other two regions.\\
\noindent$\bullet$ In the inner region $T_1=B_R(0)$ :
Considering that the error term vanishes on compact set, we have
\begin{align*}
    \lim_{n\rightarrow\infty}\int_{B\cap T_1}\exp{\left(-\Phi_n(\bx)\right)}d\bx
    =\int_{B\cap T_1}\exp{\left(-\frac{1}{2}\displaystyle\sum_{i=1}^{m-1}{\Big((\lambda_i - \lambda_i^2) x_i^2\Big)}
-\frac{1}{12}\displaystyle\sum_{i=1}^{m}\frac{(V_{im}x_m)^4}{p_i}\right)}d\bx.
\end{align*}
\noindent$\bullet$ In the intermediate region $T_2=F_r/B_R(0)$ : We will bound the function $\Phi_n(\bx)$ in this region to yield an integration of zero. Rewrite $\Phi$ by
\[
\Phi_n(x_1,x_2,\cdots,x_m)=n\tilde{\Phi}_n(\tilde{x}_1,\cdots,\tilde{x}_m),
\]
where $\tilde{x}_i=n^{-1/2}x_i$ ($1 \leq i\leq m-1$), $\tilde{x}_m=n^{-1/4}x_m$ and
\begin{align*}
    \tilde{\Phi}_n(\bx)=\frac{1}{2}\langle \Lambda \bx,\bx\rangle-\sum_{i=1}^{m}{p_i\log\cosh\left(
    (\mathbf{D}^{-1}\mathbf{V} \Lambda \bx)_i
    \right)}.
\end{align*}
In a similar computation, we have
\begin{align*}
    \tilde{\Phi}_n(\bx)=\frac{1}{2}\sum_{i=1}^{m-1}(\lambda_i-\lambda_i^2)x_i^2+\frac{1}{12}\sum_{i=1}^m\frac{1}{p_i}\left((\mathbf{V}\Lambda \bx)_i\right)^4 -c_r \sum_{i=1}^m p_i \left((\mathbf{D}^{-1}\mathbf{V}\Lambda \bx)_i\right)^6.
\end{align*}
Note that since $T_2\in F_r$, each component of $\tilde{\bx}$ is less than or equal to $r$ and $c_r$ depends on $r$. Therefore, by taking sufficiently small $r$, we have 
\begin{align*}
    \tilde{\Phi}_n(\bx)
    \geq \frac{1}{2}\langle \tilde{C}(x_1,\cdots,x_{m-1},x_m),(x_1,\cdots,x_{m-1},x_m)\rangle,
\end{align*}
for some positive definite matrix $\tilde{C}$. This yields
\begin{align*}
    \int_{B\cap T_2}\exp{\left(-\Phi_n(\bx)\right)}d\bx
    \leq\int_{B\cap T_2}\exp{\left(- \frac{n}{2}\displaystyle\sum_{i,j=1}^{m-1}{\tilde{C}_{ij}x_ix_j} \right)}d\bx,
\end{align*}
and the right-hand side vanishes as $R\rightarrow\infty$ by dominated convergence theorem.

\noindent$\bullet$ In the outer region $T_3=F_r^c$: Because$\tilde{\Phi}_n$ has a unique minimum 0 at 0, for any $r>0$, $\inf_{\bx\in T_3}\tilde{\Phi}_n(\tilde{\bx})>0$. Therefore, we attain
\begin{align*}
    \lim_{n\rightarrow\infty}\int_{B\cap T_3}\exp{\left(-\Phi_n(\bx)\right)}d\bx
    =\lim_{n\rightarrow\infty}\int_{B\cap T_3}\exp{\left(-n\tilde{\Phi}_n(\tilde{\bx})\right)}d\bx=0,
\end{align*}
by the monotone convergence theorem.
\vspace{0.2cm}

In conclusion,
\[
\lim_{n\rightarrow\infty}\mathbb{P}({{\bm{U}}}_t+\bm{Y}_t\in B)=\frac{1}{Z}\int_{B}\exp{\left(-\frac{1}{2}\displaystyle\sum_{i=1}^{m-1}{\Big((\lambda_i - \lambda_i^2) x_i^2\Big)}
-\frac{1}{12}\displaystyle\sum_{i=1}^{m}\frac{(V_{im}x_m)^4}{p_i}\right)}d\bx,
\]
and thus
\[
\lim_{n\rightarrow\infty}\mathbb{E}\exp(i\langle \bm{u},{{\bm{U}}}_t+\bm{Y}_t\rangle )=\exp\left( -\frac{1}{2}\langle (u_1,\cdots,u_{m-1},\mathbf{\Sigma}(u_1,\cdots,u_{m-1})\rangle \right)\phi(u_m),
\]
where $\phi$ is the characteristic function of a random variable with distribution 
\[
\exp\left(-\left(\frac{1}{12}\sum_{i=1}^{m}(V_{im})^4/p_i\right)x^4\right).
\]
\end{proof}
We now prove Theorem \ref{Thm:critical_lower}.
\begin{proof}[Proof of Theorem \ref{Thm:critical_lower}]
It is sufficient to show a lower bound of the mixing time of some functions of magnetization. Let us denote the column vectors of $\mathbf{V}$ in Proposition \ref{Prop:Non-CLT} as $\vec{v}_1,\cdots,\vec{v}_m$. Note that $\vec{v}_m^\top$ is the left eigenvector of $\mathbf{Q}$ with the largest eigenvalue $1$. Then, Proposition \ref{Prop:Non-CLT} implies that each of $U_t^{(i)}$, $(1\leq i\leq m)$ defined by
\begin{align*}
    U_t^{(j)}&=n^{1/2}\sum_{i=1}^m v_{ji} S_t^{(i)}/\sqrt{p_i},\quad (j=1,2,\cdots,m-1),\\
    U_t^{(m)}&=n^{1/4}\sum_{i=1}^m v_{mi} S_t^{(i)}/\sqrt{p_i},
\end{align*}
converges to a non-trivial measure, which means that there exist a constant $A_i>0$ for $i=1,2,\cdots,m$ such that
\begin{align}\label{eq:critical_lower}
    &\Upsilon_j:=\{ \sigma : \Big|\sum_{i=1}^m v_{ji} S^{(i)}/\sqrt{p_i}\Big|\leq A_j n^{-1/2} \},\quad (j=1,2,\cdots,m-1),\nonumber\\
    &\Upsilon_m:=\{ \sigma : \Big|\sum_{i=1}^m v_{mi} S^{(i)}/\sqrt{p_i}\Big|\leq A_m n^{-1/4} \},\nonumber\\
    &\mu(\Upsilon_m)\geq\mu\left(\bigcap_{i=1}^m \Upsilon_i\right)\geq \frac{2}{3}.
\end{align}
We claim that there exists a starting state $\bS_0$ such that $\prob_{\bS_0}(\bS_t\in\Upsilon_m)\leq\frac{1}{3}$ for some suitable $t=O(n^{3/2})$. We define functions
\begin{align*}
    F_j(\bS):=\sum_{i=1}^m v_{ji} S^{(i)}/\sqrt{p_i},\quad j=1,2,\cdots,m,
\end{align*}
and choose the state $\bS_0$ satisfying $F_j(\bS_0)=2A_j n^{-1/2}$ for $j=1,2,\cdots,m-1$ and $F_m(\bS_0)=2A_m n^{-1/4}$. We may define a random variable $\tilde{\bS}_t$ that moves exactly as $\bS_t$ except when $|F_j(\bS_t)|\geq 2A_j n^{-1/2}$ $(1\leq j\leq m-1)$ or $|F_m(\bS_t)|\geq 2A_m n^{-1/4}$. On such cases, $\tilde{\bS}_t$ remains at the state with probability equal to the probability that $\bS_t$ either moves to increase the absolute value of the function $F_i({\bS}_t)$  $(1\leq i\leq m)$ or remains at the state. It follows that $\prob_{\tilde{\bS}_0}(\tilde{\bS}_t\in\Upsilon)\geq\prob_{\bS_0}({\bS}_t\in\Upsilon)$. We also define a stopping time $\tau:=\min\{t\geq 0: \tilde{\bS}_t\in \Upsilon \}$. Lastly, we note that $\vec{v}_m=\ba^{\top}=(a_1,a_2,\cdots,a_m)$ and define random variables 
\begin{align*}
U_t:=\sum_{i=1}^m a_{i}S_t^{(i)}/\sqrt{p_i},\quad
\tilde{U}_t:=\sum_{i=1}^m a_{i}\tilde{S}_t^{(i)}/\sqrt{p_i},\quad
Z_t:=\tilde{U}_0-\tilde{U}_{t\wedge \tau}.
\end{align*}
$\tilde{\bS}_t$ acts exactly as $\bS_t$ on the event $\{A_m n^{-1/4}<\tilde{U}<2A_m n^{-1/4}\}$. Thus, \eqref{eq:drift} implies
\begin{align*}
    \E_{\tilde{\bS}_0}\left[\tilde{S}_{t+1}^{(i)}|\mathcal{F}_t\right]
    =\left(1-\frac{1}{n}\right)\tilde{S}_t^{(i)}+\frac{p_i}{n}\tanh(\beta \tilde{X}_t^{(i)}) +O(n^{-2}).
\end{align*}
Therefore, we obtain
\begin{align*}
    \E_{\tilde{\bS}_0}[\tilde{U}_{t+1}|\mathcal{F}_t]
    &=\tilde{U}_t+\frac{1}{n}\sum_{i=1}^m a_i\sqrt{p_i}\left(\tanh(\beta \tilde{X}_t^{(i)})-\beta\tilde{X}_t^{(i)}\right) +O(n^{-2}),\\
    \E_{\tilde{\bS}_0}[Z_{t+1}|\mathcal{F}_t]
    &=Z_t-\frac{1}{n}\sum_{i=1}^m a_i\sqrt{p_i}\left(\tanh(\beta \tilde{X}_t^{(i)})-\beta\tilde{X}_t^{(i)}\right) +O(n^{-2}).
\end{align*}
Note that $Z_t=O(n^{-1/4})$ and $\tanh(\beta \tilde{X}_t^{(i)})-\beta\tilde{X}_t^{(i)}=O(n^{-3/4})$. In addition,
\begin{align*}
    \E_{\tilde{\bS}_0}[Z_{t+1}^2|\mathcal{F}_t]
    &\leq \left(\E_{\tilde{\bS}_0}[Z_{t+1}|\mathcal{F}_t]\right)^2+\frac{c_1}{n^2}=Z_t^2+\frac{c_2}{n^2},
\end{align*}
follows to yield $\E_{\tilde{\bS}_0}[Z_{t}^2|\mathcal{F}_t]\leq \frac{c_2 t}{n^2}$. Therefore, we obtain
\begin{align*}
    \frac{c_1 t}{n^2}\geq \E_{\tilde{\bS}_0}[Z_t^2\mathbbm{1}_{\tau>t}]
    &\geq (A_m n^{-1/4})^2\prob_{\tilde{\bS}_0}(\tau\leq t)
    =C n^{-1/2}\prob_{\tilde{\bS}_0}(\tau\leq t)\\
    &\geq C n^{-1/2}\prob_{\tilde{\bS}_0}(\tilde{\bS}_t\in\Upsilon)\geq C n^{-1/2}\prob_{{\bS}_0}({\bS}_t\in\Upsilon).
\end{align*}
Finally, we take $t= C n^{3/2}/(3c_1)$ to conclude
\[
\prob_{{\bS}_0}({\bS}_t\in\Upsilon)\leq \frac{1}{3}.
\]
Combining with \eqref{eq:critical_lower}, we conclude
\[
d\left(\frac{Cn^{3/2}}{3c_1}\right)\geq \frac{2}{3}-\frac{1}{3}=\frac{1}{3}.
\]
\end{proof}

\section{Exponential mixing time at low Temperature}\label{sec:low}
We now consider the case $\beta>\beta_{cr}$ to show that the mixing time is exponential in $n$. We claim that there are two local minima of the free energy function. For a given magnetization vector $\bm{M}=(M^{(1)},M^{(2)}\cdots,M^{(n)})$, we define 
\[
\mo_i:=M^{(i)}/(np_i),~~i=1,2,\cdots,m,\quad \vec{\mo}=(\mo_1,\mo_2,\cdots,\mo_m).
\]
Considering that $|M^{(i)}|\leq np_i$, each $\mo_i$ can take values in $[-1,1]$ for $1\leq i\leq m$. Recall from \eqref{eq:Gibbs2} that the Gibbs measure is proportional to $\exp{
\left(\frac{\beta}{2n}\sum_{1\leq i\leq j\leq m}{k_{ij}M^{(i)}M^{(j)}}\right)
}$. Therefore, by multiplying the entropy term, which is the number of possible states, we deduce that the distribution of $\vec{\mo}$ is proportional to
\[
\prod_{i=1}^m {\binom{np_i}{np_i(\mo_i+1)/2}}\prod_{1\leq i\leq j\leq m}\exp{\Big(n\beta k_{ij}p_ip_j\mo_i\mo_j\Big)}.
\]
Stirling's formula implies that this is
\begin{align*}
    \sim \prod_{1\leq i\leq j\leq m}&\exp{\Big(n\beta k_{ij}p_ip_j\mo_i\mo_j\Big)}
    \prod_{i=1}^m \exp{\left(-np_i\left(\frac{1+\mo_i}{2}\ln\frac{1+\mo_i}{2}+\frac{1-\mo_i}{2}\ln\frac{1-\mo_i}{2}\right)\right)}\\
    =&\exp\left(n\left\{\beta\sum_{1\leq i\leq j\leq m}{k_{ij}p_ip_j\mo_i\mo_j}- \sum_{i=1}^m p_i\left(\frac{1+\mo_i}{2}\ln\frac{1+\mo_i}{2}+\frac{1-\mo_i}{2}\ln\frac{1-\mo_i}{2}\right) \right\}\right),
\end{align*}
where $a_n\sim b_n$ implies $\lim_{n\rightarrow\infty}\frac{\log a_n}{\log b_n}=1$. We define a function on $\vec{\mo}$ by
\[
f(\vec{\mo}):=\beta\sum_{1\leq i\leq j\leq m}{k_{ij}p_ip_j\mo_i\mo_j}- \sum_{i=1}^m p_i\left(\frac{1+\mo_i}{2}\ln\frac{1+\mo_i}{2}+\frac{1-\mo_i}{2}\ln\frac{1-\mo_i}{2}\right),
\]
which indicates free energy. Then, we have
\begin{align*}
    \frac{\partial f(\vec{\mo})}{\partial \mo_i}=\beta\sum_{j=1}^m k_{ij}p_ip_j\mo_j-\frac{p_i}{2} \ln{\frac{1+\mo_i}{1-\mo_i}}.
\end{align*}
We seek for $\vec{\mo}$ such that $|\mo_i|\leq 1$ and $\frac{\partial f(\vec{\mo})}{\partial \mo_i}=0$ for every $1\leq i\leq m$, equivalently,
\[
\beta\sum_{j=1}^m k_{ij}p_j\mo_j = \frac{1}{2}\ln\frac{1+\mo_i}{1-\mo_i},
\]
which is known as mean-field equation. We rewrite this equation in matrix form
\[
\beta \vec{\mo}\mathbf{B}=(\tanh^{-1}\mo_1,\tanh^{-1}\mo_2,\cdots,\tanh^{-1}\mo_m).
\]
Now, we define functions by
\begin{align*}
    F(\vec{\mo}):=\beta \vec{\mo}\mathbf{B}-(\tanh^{-1}\mo_1,\cdots,\tanh^{-1}\mo_m),\quad
    \tilde{F}(\gamma):=F(\gamma\ba^{\top}).
\end{align*}
Clearly, the function $F$ is continuous on the domain $(-1,1)^m$. Note that $\ba^\top$ is the left eigenvector of the matrix $\mathbf{B}$ with eigenvalue $1/\beta_{cr}$ to obtain
\begin{align*}
    \tilde{F}(\gamma)&=\gamma (\beta/\beta_{cr}) \ba^{\top} -\left(\tanh^{-1}(\gamma a_1),\cdots,\tanh^{-1}(\gamma a_m)\right).
\end{align*}
Considering that $\beta/\beta_{cr}>1$, there exists a unique positive value $\eta<1$ such that
\begin{align*}
    \tanh^{-1}\eta =(\beta/\beta_{cr})\eta,\quad
    \begin{cases}
    \tanh^{-1}x<(\beta/\beta_{cr})x &\text{if } x<\eta,\\ \tanh^{-1}x>(\beta/\beta_{cr})x &\text{if } x>\eta,
    \end{cases}
\end{align*}
Therefore, we derive
\[
F\left(\frac{\eta}{a_{\text{max}}}\ba^{\top}\right)
=\tilde{F}\left(\frac{\eta}{a_{\text{max}}}\right)\geq\mathbf{0}.
\]
Observing that $\lim_{\vec{\mo}\rightarrow\mathbf{1}}F(\vec{\mo})=-\mathbf{\infty}$ and $F$ is a continuous function, the intermediate value theorem implies that there exists an $m$-dimensional vector $\mathbf{0}<\vec{\mo}_0<\mathbf{1}$ such that $F(\vec{\mo}_0)=\mathbf{0}.$ Clearly, we also have $F(-\vec{\mo}_0)=\mathbf{0}.$ Therefore, free energy function has two local minima. These metastable states result in exponentially slow mixing.
\newline

\noindent\thanks{\textbf{Acknowledgment.}}
{I would like to thank Professor Insuk Seo for introducing the problem and sharing his insight through numerous discussions. The author is supported by the National Research Foundation of Korea (NRF) grant funded by the Korean government (MSIT) (No. 2018R1C1B6006896).}

\bibliographystyle{amsplain}
\bibliography{Multi-component}

\end{document}